\def\rr{{\mathbb R}}
\def\rn{{{\rr}^n}}
\def\cc{{\mathbb C}}
\def\nn{{\mathbb N}}
\def\cm{{\mathcal M}}
\def\ch{{\mathcal H}}
\def\fz{\infty}
\def\dz{\delta}
\def\lz{\lambda}
\def\lf{\left}
\def\r{\right}
\def\hs{\hspace{0.26cm}}
\def\ls{\lesssim}
\def\gs{\gtrsim}
\def\noz{\nonumber}
\def\wz{\widetilde}
\def\dist{\mathop\mathrm{\,dist\,}}
\def\loc{{\mathop\mathrm{\,loc\,}}}
\def\q1{\wz q}
\def\Q1{q_1}
\def\dint{\displaystyle\int}
\def\dist{{\mathop\mathrm{\,dist\,}}}
\def\loc{{\mathop\mathrm{loc\,}}}
\newtheorem{thm}{Theorem}[section]
\newtheorem{prop}[thm]{Proposition}
\newtheorem{lem}[thm]{Lemma}
\theoremstyle{definition}
\newtheorem{defn}[thm]{Definition}
\newtheorem{rem}[thm]{Remark}
\newtheorem{Example}[thm]{Example}
\numberwithin{equation}{section}
\begin{document}
\title{\bf\Large  Sharp Poincar\'e--Sobolev Inequalities of Choquet--Lorentz
Integrals with Respect to Hausdorff Contents on Bounded John Domains
\footnotetext{\hspace{-0.35cm}
2020 {\it Mathematics Subject Classification}.
{Primary 46E36; Secondary 42B35, 42B25, 26D10.} \endgraf
{\it Key words and phrases.} bounded John domain, Hausdorff content,
Choquet--Lorentz integral,
Poincar\'e(--Sobolev) inequality. \endgraf
This project is supported by the National Natural Science Foundation
of China (Grant Nos. 12201139, 12431006, 12371093, and 12371127), the National
Key Research and Development Program of China (Grant No. 2020YFA0712900), and the Natural Science Foundation of Hunan province (2024JJ3023).}}
\author{Long Huang, Yuanshou Cao, Dachun Yang and
Ciqiang Zhuo\footnote{Corresponding author, E-mail:
\texttt{cqzhuo87@hunnu.edu.cn}/{\color{red}{November 22, 2024}}/Final version.}
}
\date{ }
\maketitle
	
\vspace{-0.7cm}

\begin{center}
\begin{minipage}{13cm}
{\small {\bf Abstract}\quad
Let $\Omega$ be a bounded John domain in $\mathbb R^n$ with $n\ge 2$,
and let $\mathcal{H}_{\infty }^{\delta}$
denote the Hausdorff content of dimension $\delta\in (0,n]$.
In this article, the authors prove the Poincar\'e and the Poincar\'e--Sobolev
inequalities, with sharp ranges of indices, on Choquet--Lorentz integrals with respect to
$\mathcal{H}_{\infty }^{\delta}$ for all continuously differentiable
functions on $\Omega$. These results not only extend
the recent Poincar\'e and Poincar\'e--Sobolev inequalities
to the Choquet--Lorentz integrals, but also provide some endpoint
estimates (weak type) in the critical case. 		
One of the main novelties exists in that, to achieve the goals,
the authors develop some new tools associated with
Choquet--Lorentz integrals on $\mathcal{H}_{\infty }^{\delta}$, such as
the fractional Hardy--Littlewood maximal inequality and the Hedberg-type
pointwise estimate on the Riesz potential. As an application, the authors obtain
the sharp boundedness of the Riesz potential on Choquet--Lorentz integrals.
Moreover, even for classical Lorentz integrals,
these Poincar\'e and Poincar\'e--Sobolev
inequalities are also new.}
\end{minipage}
\end{center}

\vspace{0.2cm}

\section{Introduction}
	
On the $n$-dimensional Euclidean space $\rn$ with $n\ge 2$,
if $\Omega\subset \rn$ is a bounded Lipschitz domain or
even a John domain, then the $(p,p)$-Poincar\'{e} inequality
\begin{equation}\label{eq10-24-0}
\inf_{b\in\mathbb{R}}\lf[\int_{\Omega}|u(x)-b|^{p}\,dx\r]^\frac{1}{p}
\leq C\lf[\int_{\Omega}|\nabla u(x)|^{p}\,dx\r]^\frac 1p
\end{equation}
for any $u\in W^{1,p}(\Omega)$ with $p\in [1, \infty)$
and the $(\frac {np}{n-p},p)$-Poincar\'{e}--Sobolev inequality
\begin{equation}\label{eq10-24}	\inf_{b\in\mathbb{R}}
\lf[\int_{\Omega}|u(x)-b|^{\frac{np}{n-p}}\,dx
\r]^\frac{n-p}{np}
\leq C\lf[\int_{\Omega}|\nabla u(x)|^{p}\,dx\r]^\frac 1p
\end{equation}
for any $u\in W^{1,p}(\Omega)$ with $p\in [1, n)$ hold;
see, for instance, \cite{MGV1985}.
Here and thereafter, $ W^{1,p}(\Omega)$ denotes the classical
Sobolev space defined on $\Omega$ and,
for any differentiable function $u$ on $\rn$, $\nabla u$
denotes the gradient of $u$, that is,
$\nabla u:=(\frac{\partial u}{\partial x_1}, \ldots,
\frac{\partial u}{\partial x_n}).$
These Poincar\'e type inequalities are of great importance
in potential theory, harmonic analysis, and
partial differential equations; see, for instance, the book \cite{zsy16} and
the article \cite{dlyyz}.	
Recently, the counterparts of them in the setting of Choquet integrals with respect to the Hausdorff content $\ch_\fz^\delta$
of dimension $\delta\in(0,n]$ were obtained. To be precise, let $\Omega\subset\rn$, with $n\ge 2$,
be a bounded $(\alpha,\beta)$-John domain and $\delta\in(0,n]$. The
corresponding $(p,p)$-Poincar\'{e} inequality
\begin{align*}
\inf_{b\in\mathbb{R}}\int_{\Omega}|u(x)-b|^{p}
\,d\mathcal{H}_{\infty }^{\delta }\leq C\beta^p
\lf(\frac{\beta}{\alpha}\r)^{2np}\int_{\Omega}|\nabla u(x)|^{p}
\,d\mathcal{H}_{\infty }^{\delta }
\end{align*}
for any $u\in C^1(\Omega)$ (the set of
all continuously differentiable functions on $\Omega$) and $p\in(\frac{\delta}{n},\infty)$ and
the $(\frac{\delta p}{\delta-p},p)$-Poincar\'{e}--Sobolev inequality
\begin{equation}\label{eq10-24x}
\inf_{b\in\mathbb{R}}\lf\{\int_{\Omega}|u(x)-b|^{\frac{\delta p}{\delta-p}}
\,d\ch_\fz^\delta\r\}^\frac{\delta-p}{\delta p}
\leq C\lf\{\int_{\Omega}|\nabla u(x)|^{p}\,d\ch_\fz^\delta\r\}^\frac 1p
\end{equation}
for any $u\in C^1(\Omega)$ and $p\in(\frac {\delta}{n},\delta)$ hold; see \cite{hh23}.
These recover the aforementioned two classical inequalities by taking $\delta=n$.
	
Notice that, compared \eqref{eq10-24x}
with the classical Poincar\'e--Sobolev inequality \eqref{eq10-24},
the endpoint estimate for $p=\frac{\delta}{n}$  in the setting of Choquet
integrals is missing. Inspired by this problem, we are interested in the Poincar\'{e}--Sobolev
inequalities  based on Choquet integrals in the endpoint case $p=\frac{\delta}{n}$.
In this article, we first establish some weak type estimates in this case.
Furthermore,
we also study the Poincar\'{e}--Sobolev
inequalities in the context of
Choquet--Lorentz integrals with respect to the Hausdorff content
$\mathcal{H}_{\infty }^{\delta }$.
Recall that the Lorentz space, as an intermediate space of
Lebesgue spaces under real interpolation, traces back to
Lorentz \cite{am05,L51}. In view of their finer structures,
Lorentz spaces appear frequently in the study on various critical
or endpoint analysis problems from many different research fields.
For instance, Spector \cite{sp19} gave an
optimal Lorentz space estimate for the Riesz potential
acting on curl-free vectors and Spector and Van Schaftingen
\cite{sv19} established some optimal embeddings on the Lebesgue space $L^1$ into Lorentz spaces
for some vector differential operators via Gagliardo's lemma.
Indeed, there exist enormous literatures on
this subject, here we only mention several recent articles from
harmonic analysis (see, for instance, \cite{dhr09,ah15,Lyw2016}) and
partial differential equations (see, for instance, \cite{ps20,zyy22}).
	
In order to state our main theorems, we begin with recalling some
notation on Hausdorff contents and the associated Choquet--Lorentz integral.
Let $B(x,r):=\{y\in\rn:\ |y-x|<r\}$ denote the open ball
centered at $x\in\rn$ with the radius $r\in(0,\fz)$.
For the following definition of the Hausdorff
content of a set $K$ in $\rn$, we refer to the
survey \cite{su09} and also the book on the geometric measure theory
of Federer \cite[p.\,169]{f76}. For more progress
on functional inequalities about Hausdorff contents, we refer to Saito et al. \cite{st22,stw20,stw19,stw16}
 and also to \cite{actuv13,liu16,v86}.	
	
\begin{defn}\label{def-a1} Let $K$ be a set in $\rn$ with $n\geq2$
and let $\dz \in (0,n]$.
The \emph{Hausdorff content} $\mathcal{H}_{\infty }^{\delta }\lf (K \r )$
of $K$ is defined by setting
\begin{align}\label{0e1}
\mathcal{H}_{\infty }^{\delta }\lf (K \r )
:=\inf\lf \{\sum_i r_{i}^{\delta }:\
K\subset \bigcup_i B\lf ( x_{i},r_{i} \right ) \right \},
\end{align}
where the infimum is taken over all finite or countable ball coverings
$\{B(x_{i},r_{i})\}_{i}$ of $K$. The quantity $\mathcal{H}_{\infty }^{\delta }\lf (K \r )$
in \eqref{0e1} is also referred to as the $\delta$-\emph{Hausdorff content}
or the $\delta$-\emph{Hausdorff capacity} or the \emph{Hausdorff content of $K$  of
dimension} $\delta$.
\end{defn}
	
Let us now recall the definition of the Choquet integral with respect
to the Hausdorff content. In this article, $\Omega$ is always assumed to be a
\emph{domain} in $\rn$ with $n\ge 2$, that is, $\Omega$ is an open and connected
set of $\rn$. Let $\delta\in(0,n]$.
Then the \emph{Choquet integral with respect to the Hausdorff
content $\mathcal{H}_{\infty }^{\delta }$}, for short, the
\emph{Choquet integral} $\int_\Omega f(x)\,d\mathcal{H}_{\infty }^{\delta }$
of a non-negative function $f$ on $\Omega$ is defined by setting
\begin{align*}
\int_\Omega f(x)\,d\mathcal{H}_{\infty }^{\delta }:=\int_{0}^{\infty}
\mathcal{H}_{\infty }^{\delta }(\{x\in\Omega:\
f(x)>\lambda\})\,d\lambda.
\end{align*}
Since $\mathcal{H}_{\infty }^{n}$ is monotone, then,
for any function $g$ defined on $\Omega$, its distribution
function in the sense of the Hausdorff content
$\lambda\mapsto\mathcal{H}_{\infty }^{\delta }(\{x\in\Omega:\ |g(x)|>\lambda\})$
is decreasing on $\lambda\in[0,\fz)$.
Thus, we easily find that the above distribution function
is measurable with respect to the Lebesgue measure.
Therefore, $\int_{0}^{\infty}\mathcal{H}_{\infty }^{\delta }
(\{x\in\Omega:\ |g(x)|>\lambda\})\,d\lambda$ is a well-defined
Lebesgue integral and hence the Choquet integral is also well defined.
Based on this, for any $p\in(0,\fz)$, the \emph{$p$-Choquet
integral with respect to the Hausdorff content},
$\|f\|_{L^p(\Omega,\ch_\fz^\delta)}$,
of a function $f$ on $\Omega$ is defined by setting
\begin{align*}
\|f\|_{L^p(\Omega,\ch_\fz^\delta)}
:=\lf[\int_\Omega |f(x)|^p\,d\ch_\fz^\delta\r]^\frac1p
:=\lf[p\int_{0}^{\infty}{\lambda}^{p-1}
\mathcal{H}_{\infty }^{\delta }\lf(\lf\{x\in\Omega:\ |f(x)|>\lambda\r\}\r)
\,d\lambda\r]^{\frac{1}{p}}.
\end{align*}
In what follows, for any $p\in(0,\fz)$, we always denote
by $L^p(\Omega,\ch_\fz^\delta)$ the space of all functions
$f$ on $\Omega$ such that their quasi-norms $\|f\|_{L^p(\Omega,\ch_\fz^\delta)}$
are finite.
	
We point out that, compared with Riemann or Lebesgue integrals,
the Choquet integral with respect to the Hausdorff content
has the following significant differences:
\begin{enumerate}
\item[{\rm(i)}]
The Choquet integral is a nonlinear integral, that is, for any non-negative
functions $f$ and $g$ on $\Omega$,
$$\int_\Omega f(x)\,d\mathcal{H}_{\infty }^{\delta }+\int_\Omega g(x)\,d\mathcal{H}_{\infty }^{\delta }\neq\int_\Omega [f(x)+g(x)]\,d\mathcal{H}_{\infty }^{\delta }
\le 2\left[\int_\Omega f(x)\,d\mathcal{H}_{\infty }^{\delta }
+\int_\Omega g(x)\,d\mathcal{H}_{\infty }^{\delta }\right].$$
\item[{\rm(ii)}] Choquet integrals are well defined for all non-measurable functions
with respect to the Lebesgue measure.
\end{enumerate}
For more details on Choquet integrals, we refer to \cite{su09}.
The Choquet--Lorentz integral with respect to the Hausdorff content is
defined as follows.
	
\begin{defn}\label{def-a2}
Let $\delta\in(0, n]$, $p\in(0,\infty)$, and $q\in(0,\infty]$.
The \emph{Choquet--Lorentz integral with respect to the Hausdorff content
$\ch_\fz^\delta$}, for short, the \emph{Choquet--Lorentz integral}
$\|f\|_{L^{p,q}(\Omega,\mathcal{H}_{\infty }^{\delta })}$ of
a Lebesgue measurable function \emph{f} on $\Omega$ is defined by setting
\begin{align*}
\|f\|_{L^{p,q}(\Omega,\mathcal{H}_{\infty }^{\delta })}:=
\begin{cases}
\lf[p\dint_{0}^{\infty}\lambda^{q}
\lf\{\mathcal{H}_{\infty }^{\delta }\lf(\lf\{x\in\Omega:\ |f(x)|>\lambda\r\}\r)
\r\}^{\frac{q}{p}}\,\frac{d\lambda}
{\lambda}\r]^{\frac{1}{q}} &\rm{if}\ \emph{q}\in(0,\infty),\\
\sup\limits_{\lambda>0}\lambda\lf[\mathcal{H}_{\infty }^{\delta }
\lf(\lf\{x\in\Omega:\ |f(x)|>\lambda\r\}\r)\r]^{\frac{1}{p}}
&\rm{if}\ \emph{q}=\infty.
\end{cases}
\end{align*}
\end{defn}
	
For convenience, we use $L^{p,q}(\Omega,\mathcal{H}_{\infty }^{\delta })$
to denote the \emph{
Choquet--Lorentz space} of all functions $f$ on $\Omega$ having
finite quasi-norms $\|f\|_{L^{p,q}(\Omega,\mathcal{H}_{\infty }^{\delta })}$.
Indeed, the Choquet--Lorentz space has appeared in the survey \cite[Theorem 7]{su09}.
Moreover, from the definition, we infer that, when $p=q\in(0,\infty)$,
$\|\cdot\|_{L^{p,p}(\Omega,\mathcal{H}_{\infty }^{\delta })}
=\|\cdot\|_{L^{p}(\Omega,\mathcal{H}_{\infty }^{\delta })}$ and hence, in this case,
${L^{p,p}(\Omega,\mathcal{H}_{\infty }^{\delta })}=
L^{p}(\Omega,\mathcal{H}_{\infty }^{\delta });$
when $p\in(0,\infty)$ and $q=\infty$, the Choquet--Lorentz space
$L^{p,\infty}(\Omega,\mathcal{H}_{\infty }^{\delta })$
is the ``weak" version of $L^p(\Omega,\ch_\fz^\delta)$.

We still need the definition of the $(\alpha,\beta)$-John domain,
which was originally introduced by John in \cite{kr91}.

\begin{defn}\label{d1.3}
Assume that $\Omega$ is a bounded domain in $\rn$ with $n\geq2$.
Then $\Omega$ is called an \emph{$(\alpha,\beta)$-John domain}
if there exist constants $0<\alpha\leq\beta<\infty$ and
a point $x_0\in\Omega$ such that each point
$x\in\Omega$ can be joined to $x_0$ by
a rectifiable curve $\gamma_x:\ [0,\ell(\gamma_x)]\rightarrow\Omega$,
parameterized by its arc length, satisfying
$$\gamma_x(0)=x,\ \gamma_x(\ell(\gamma_x))
=x_0,\ \ell(\gamma_x)\leq\beta,$$
and
$$\dist\lf(\gamma_x(t),\partial\Omega\r)\geq\frac{\alpha}{\beta}t
\quad \forall\, t\in[0,l(\gamma_x)].$$
Here the point $x_0$ is called the \emph{John center} of $\Omega$.
\end{defn}
	
\begin{rem}
It is not difficult to show that convex domains and domains
with Lipschitz boundary are John domains.
The domains with fractal boundaries such as the von Koch snow
flake are also John domains,
but domains with outward spires are not allowed; see \cite{hh23}.
Moveover, let $\Omega:=B(\mathbf{0},k)\subset \rn$ with some positive integer $k$.
Here and thereafter, $\mathbf{0}$ denotes the \emph{origin} of $\rn$.
Taking $\alpha=\beta=k$ and
$x_0$ being the center of $\Omega$, then $\Omega$ is a $(k, k)$-\emph{John domain}.
\end{rem}

With these preparations, the first main theorem of this article is the following Poincar\'e inequality and endpoint
weak type estimate in terms of Choquet--Lorentz integrals with respect
to the Hausdorff content.
	
\begin{thm}\label{them-1}
Let $\Omega\subset\rn$ with $n\ge 2$ be a bounded $(\alpha,\beta)$-John domain
with $0<\alpha\leq\beta<\infty$ and let $\delta\in(0,n]$.
\begin{enumerate}
\item[{\rm(i)}]If $p,q\in(\frac{\delta}{n},\infty)$,
then there exists a positive constant $C$,
depending only on $n$, $\delta$, $p$, and $q$,
such that, for any $u\in C^1(\Omega)$,
\begin{align}\label{1ex1'}
\inf_{b\in\mathbb{R}}\|u-b\|_{L^{p,q}(\Omega,\mathcal{H}_{\infty }^{\delta })}
\leq C\beta\lf(\frac{\beta}{\alpha}\r)^{2n}\|\nabla u
\|_{L^{p,q}(\Omega,\mathcal{H}_{\infty}^{\delta})}.
\end{align}
\item[{\rm(ii)}] If $p=\frac \delta n$, then there exists
a positive constant $C$, depending only on $n$ and $\delta$,
such that, for any $u\in C^1(\Omega)$,
\begin{align}\label{1ex2'}
\inf_{b\in\rr}\|u-b\|_{L^{p,\fz}(\Omega,\ch_\fz^\delta)}
\le C\beta\lf(\frac{\beta}{\alpha}\r)^{2n}
\|\nabla u\|_{L^{p}(\Omega,\ch_\fz^\delta)}.
\end{align}
\end{enumerate}
\end{thm}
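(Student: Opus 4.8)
The plan is to reduce the Choquet--Lorentz Poincar\'e inequality on a John domain to a boundedness property of a suitable potential-type operator, following the classical chaining argument adapted to the nonlinear Hausdorff-content setting. The standard strategy (as in \cite{hh23} for Choquet integrals, going back to Boman-type chains) is: cover $\Omega$ by a Whitney-type collection of balls adapted to the John structure, express $u(x)-u_{B_0}$ (for a fixed central ball $B_0$) as a telescoping sum along the chain of balls joining $x$ to the John center, and thereby pointwise-dominate $|u(x)-b|$, with $b$ the average of $u$ over $B_0$, by a sum of fractional averages of $|\nabla u|$ over the chain balls. Summing these contributions and using the John condition $\dist(\gamma_x(t),\partial\Omega)\ge\frac{\alpha}{\beta}t$ to control the geometry of the chain, one arrives at the pointwise bound
\begin{align*}
|u(x)-b|\lesssim \beta\lf(\frac{\beta}{\alpha}\r)^{2n}\, I_1^{\boz}\lf(|\nabla u|\chi_\boz\r)(x),
\end{align*}
where $I_1^{\boz}$ denotes the (localized) Riesz potential of order $1$. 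This reduction is essentially geometric and carries over verbatim to Choquet--Lorentz integrals once one is careful that the "averages" appearing are controlled by the fractional Hardy--Littlewood maximal function, for which the paper has developed the requisite machinery; the constant $\beta(\beta/\alpha)^{2n}$ is produced exactly here, from counting overlaps and comparing radii along the John chain.

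With the pointwise estimate in hand, part (i) follows by applying the boundedness of the Riesz potential $I_1$ on Choquet--Lorentz spaces $L^{p,q}(\boz,\ch_\fz^\delta)$ in the subcritical regime. Concretely, for $p,q\in(\frac{\delta}{n},\infty)$ one has the mapping $I_1:\ L^{p,q}(\boz,\ch_\fz^\delta)\to L^{\tilde p,q}(\boz,\ch_\fz^\delta)$ with $\frac{1}{\tilde p}=\frac{1}{p}-\frac{1}{\delta}$ when $p<\delta$, and a simpler $L^{p,q}\to L^{p,q}$-type bound on a bounded domain when $p\ge\delta$ (absorbing the diameter of $\boz$ into the constant); either way, since $\boz$ is bounded, $L^{\tilde p,q}(\boz,\ch_\fz^\delta)\hookrightarrow L^{p,q}(\boz,\ch_\fz^\delta)$, giving \eqref{1ex1'}. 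This is where I would invoke the fractional Hardy--Littlewood maximal inequality and the Hedberg-type pointwise estimate advertised in the abstract: the Hedberg estimate bounds $I_1(|\nabla u|\chi_\boz)$ pointwise by a product of a power of the fractional maximal function of $|\nabla u|$ and a power of $\|\nabla u\|_{L^{p,q}(\boz,\ch_\fz^\delta)}$, and then the maximal inequality on Choquet--Lorentz spaces finishes the job.

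For part (ii), the critical case $p=\frac{\delta}{n}$, the same pointwise domination $|u(x)-b|\lesssim\beta(\beta/\alpha)^{2n}I_1(|\nabla u|\chi_\boz)(x)$ holds, and the point is that the endpoint boundedness of $I_1$ from $L^{\delta/n}(\boz,\ch_\fz^\delta)$ into the weak space $L^{\delta/n\cdot\frac{n}{\,?\,}}$---more precisely, the critical-exponent statement $I_1:\ L^{p}(\boz,\ch_\fz^\delta)\to L^{p,\infty}(\boz,\ch_\fz^\delta)$ when $p=\delta/n$ (so that $\tilde p=p$ formally blows up and one lands only in the weak class)---must be used. That endpoint mapping is itself either proven earlier in the paper or is an immediate consequence of the fractional maximal inequality via a Calder\'on--Zygmund-type splitting of $I_1$ into a local part controlled by the maximal function and a tail part controlled in $L^\infty$ on the bounded set; the weak bound arises precisely because at the critical index one cannot upgrade weak to strong. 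I expect the main obstacle to be the careful verification of these two boundedness facts for $I_1$ on Choquet--Lorentz spaces---especially the endpoint weak-type bound at $p=\delta/n$---because the Hausdorff content is only subadditive, so one cannot use duality or the usual good-$\lambda$ machinery, and one must instead rely on the content-adapted covering arguments and the maximal inequalities that the paper establishes; once those are granted, the John-chain reduction and the final interpolation/embedding steps are routine.
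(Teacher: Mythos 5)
Your outline is essentially sound and rests on the same two core ingredients as the paper: the John-domain representation $|u(x)-u_B|\le C_{(n)}(\beta/\alpha)^{2n}\int_\Omega |\nabla u(y)|\,|x-y|^{1-n}\,dy$ (which the paper imports from Reshetnyak's theorem rather than re-running the chain construction) and the boundedness of the Hardy--Littlewood maximal operator on Choquet--Lorentz integrals. Where you diverge is part (i): instead of your detour through a Sobolev-type mapping of $I_1$ followed by the embedding $L^{\tilde p,q}(\Omega,\mathcal{H}_\infty^\delta)\hookrightarrow L^{p,q}(\Omega,\mathcal{H}_\infty^\delta)$ on the bounded domain, the paper uses the elementary Hedberg bound $\int_\Omega |\nabla u(y)|\,|x-y|^{1-n}\,dy\lesssim \mathrm{diam}(\Omega)\,\mathcal{M}(|\nabla u|\mathbf{1}_\Omega)(x)$ for $x\in\Omega$ and then applies Theorem \ref{thm0530} with $\mu=0$ for (i) and the weak-type bound of Lemma \ref{a1.2} at $p=\delta/n$ for (ii). This treats all $p,q\in(\frac\delta n,\infty)$ uniformly (no case split at $p\ge\delta$), produces the factor $\beta$ cleanly from $\mathrm{diam}(\Omega)\le 2\beta$, and avoids the second-index bookkeeping your route requires: the available Riesz bound (Theorem \ref{them-rp}) improves the second Lorentz index, so to end with $\|\nabla u\|_{L^{p,q}}$ on the right you must apply it with left second index $q\delta/(\delta-p)$ and then invoke the finite-content embedding back into $L^{p,q}(\Omega,\mathcal{H}_\infty^\delta)$; this works, but it is strictly more machinery than the theorem needs.

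Two points in your write-up should be corrected, though neither is fatal. First, the factor $\beta$ does not belong in the pointwise estimate: $|u-u_B|$ is already comparable to the Riesz potential of $|\nabla u|$ with constant $C_{(n)}(\beta/\alpha)^{2n}$, and the length scale $\beta$ enters only afterwards (via $\mathrm{diam}(\Omega)$ in the maximal-function domination, or via the content of $\Omega$ in your embedding step); keeping it in both places would yield $\beta^2(\beta/\alpha)^{2n}$, worse than the stated constant. Second, your explanation of the endpoint case is off: at $p=\delta/n$ nothing ``blows up'' (the Sobolev exponent $\tilde p=\delta/(n-1)$ is finite and differs from $p$; blow-up occurs at $p=\delta$), and the genuine reason (ii) is only weak type is that $\mathcal{M}$ is merely of weak type on $L^{\delta/n}(\mathbb{R}^n,\mathcal{H}_\infty^\delta)$, which is exactly what Lemma \ref{a1.2} supplies. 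Finally, the paper inserts a small but necessary preliminary step you omit: a Lorentz--H\"older argument combined with Lemma \ref{lem-1018b} shows $|\nabla u|\in L^1(\Omega)$, so that the Riesz potential and the maximal function appearing in the argument are well defined.
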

	
\begin{rem}\label{0r1'}
\begin{enumerate}
\item[(i)] We point out that we indeed prove two stronger
inequalities than those stated in Theorem \ref{them-1}, that is,
we show that the left-hand sides of \eqref{1ex1'} and \eqref{1ex2'}
can be enhanced, respectively, into
$\|u-u_B\|_{L^{p,q}(\Omega,\mathcal{H}_{\infty }^{\delta })}$ and
$\|u-u_B\|_{L^{p,\fz}(\Omega,\mathcal{H}_{\infty }^{\delta })},$
where
\begin{align}\label{1ex3'}
u_B:=\frac{1}{|B|}\int_{B}u(x)\,dx \ \
\text{with}\ \ B:=B\lf(x_0,C_{(n)}\frac{\alpha^2}{\beta}\r).
\end{align}
Here $x_0$ is the John center of the bounded $(\alpha,\beta)$-John
domain $\Omega$ and $C_{(n)}$ is a positive constant that depends only on $n$.
In addition, if $q=p\in(\frac{\delta}{n},\infty)$,
then the Choquet--Lorentz space $L^{p,q}(\Omega,\mathcal{H}_{\infty }^{\delta })
=L^{p}(\Omega,\mathcal{H}_{\infty }^{\delta })$.
Thus, as a special case, Theorem \ref{them-1}(i) with
$q=p\in(\frac{\delta}{n},\infty)$ reduces back to
\cite[Theorem 3.2]{hh23}. Particularly, when $\delta=n$,
we then obtain the classical $(p,p)$-Poincar\'e inequality \eqref{eq10-24-0}.

\item[(ii)] Notice that $\delta\le n$ and $p\in[\frac{\delta}{n},\infty)$.
Thus, the integrable exponent $p$ in Theorem \ref{them-1} may less than 1.
We point out that, when $p=\frac\delta n$, the endpoint weak type estimate proved
in Theorem \ref{them-1}(ii) completes \cite[Theorem 3.2]{hh23}
because the endpoint case $p=\frac \delta n$ in \cite[Theorem 3.2]{hh23} is excluded.
Moreover, the estimate \eqref{1ex2'} indeed
holds for all $p\in[\frac\delta n,\fz)$ by Theorem \ref{them-1}(i) (or
\cite[Theorem 3.2]{hh23}).

\item[(iii)] When $\delta=n$, we know that the Hausdorff
content $\mathcal{H}_{\infty }^{\delta }$ coincides with the Lebesgue measure
by \eqref{eq-0905a2}. Then the Poincar\'e inequalities in Theorem \ref{them-1}
with $q\neq p$ also give the Poincar\'e inequalities on Lorentz spaces with
respect to the Lebesgue measure. Even in this special case, Theorem \ref{them-1} seems
also new to the best of our knowledge.
\end{enumerate}
\end{rem}
	
As the second main theorem of this article, we establish the Poincar\'e--Sobolev inequality
and also the endpoint weak type estimate in terms of Choquet--Lorentz integrals
as follows, in which the dimension $\delta-\mu p$ of the Hausdorff content on
the left-hand side is less than or equal to the dimension $\delta$ on the right-hand side.
	
\begin{thm}\label{them-2}
Let $\Omega\subset\rn$ with $n\ge 2$ be a bounded $(\alpha,\beta)$-John domain
with $0<\alpha\leq\beta<\infty$.
Suppose that $\mu\in[0,1)$ and $\delta\in(0,n]$.
\begin{enumerate}
\item[{\rm(i)}] If $p\in(\frac{\delta}{n},\delta)$ and $q\in(\frac{\delta(\delta-\mu p)}{n(\delta-p)},\infty)$,
then there exists a positive constant $C$, depending only on $n$, $\mu$,
$\delta$, $p$, $q$, and the John constants $\alpha,\beta$,
such that, for any $u\in C^1(\Omega)$,
\begin{align}\label{1ex4'}
\inf_{b\in\mathbb{R}}\|u-b\|_{L^{\frac{p(\delta-\mu p)}{\delta-p},q}
(\Omega,\mathcal{H}_{\infty }^{\delta-\mu p })}
\leq C \|\nabla u\|_{L^{p,\frac{q(\delta-p)}{\delta-\mu p}}
(\Omega,\mathcal{H}_{\infty }^{\delta })}.
\end{align}
\item[{\rm(ii)}] If $p=\frac{\delta}{n}$, then there exists a positive constant
$C$,
depending only on $n$, $\mu$, $\delta$, and the John constants $\alpha,\beta$,
such that, for any $u\in C^1(\Omega)$,
\begin{align}\label{1ex5'}
\inf_{b\in\mathbb{R}}\|u-b\|_{L^{\frac{p(\delta-\mu p)}{\delta-p},\fz}
(\Omega,\mathcal{H}_{\infty }^{\delta-\mu p })}
\leq C \|\nabla u\|_{L^{p}(\Omega,\mathcal{H}_{\infty }^{\delta })}.
\end{align}
\end{enumerate}
\end{thm}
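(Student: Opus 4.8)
\emph{Idea of the proof.} The plan is to derive Theorem \ref{them-2} from three ingredients: a pointwise estimate for $u-u_B$ valid on bounded John domains, a Hedberg-type pointwise bound for the Riesz potential of order $1$, and a fractional Hardy--Littlewood maximal inequality on Choquet--Lorentz integrals (the last two being the new tools that one develops beforehand). Throughout, set $r:=\frac{p(\delta-\mu p)}{\delta-p}$ and $\kappa:=\frac{\delta-p}{\delta-\mu p}$; since $p<\delta$ and $\mu<1$, one has $\kappa\in(0,1)$ and $r\kappa=p$, and in case (i), setting $s:=\frac{q(\delta-p)}{\delta-\mu p}$, one has $q\kappa=s$ while the hypothesis $q>\frac{\delta(\delta-\mu p)}{n(\delta-p)}$ is precisely $s>\frac{\delta}{n}$. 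We may assume that the right-hand side of \eqref{1ex4'}, respectively \eqref{1ex5'}, is finite. First, since $\inf_{b\in\rr}\|u-b\|\le\|u-u_B\|$ with $u_B$ as in \eqref{1ex3'}, it suffices to bound $\|u-u_B\|$; a chaining argument over a bounded $(\alpha,\beta)$-John domain, as in \cite{hh23}, gives the pointwise estimate $|u(x)-u_B|\ls I_1(|\nabla u|\cha_\Omega)(x)$ for all $x\in\Omega$, where $I_1g(x):=\int_{\rn}|x-y|^{1-n}g(y)\,dy$ and the implicit constant depends only on $n,\alpha,\beta$. Hence Theorem \ref{them-2} reduces to showing that $I_1$ maps $L^{p,s}(\Omega,\ch_\fz^{\delta})$ boundedly into $L^{r,q}(\Omega,\ch_\fz^{\delta-\mu p})$ in case (i), and $L^{p}(\Omega,\ch_\fz^{\delta})$ boundedly into $L^{r,\fz}(\Omega,\ch_\fz^{\delta-\mu p})$ in case (ii).

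Next, one would establish the Hedberg-type estimate. Fix $f:=|\nabla u|\cha_\Omega\ge0$ and, for $R>0$, split $I_1f(x)$ into $\int_{|x-y|<R}$ and $\int_{|x-y|\ge R}$. Decomposing each part over dyadic annuli, bounding the Lebesgue mean of $f$ over a ball $B(x,\rho)$ by $\rho^{n-\mu}M_\mu f(x)$, where $M_\mu f(x):=\sup_{\rho>0}\rho^{\mu-n}\int_{B(x,\rho)}f(y)\,dy$ is the fractional maximal function of order $\mu$, and controlling the tail by means of $|E|\le C_n\ch_\fz^n(E)$, the comparison $\ch_\fz^n(E)\ls\rho^{\,n-\delta}\ch_\fz^\delta(E)$ for $E$ contained in a ball of radius $\rho$, the finiteness of $\ch_\fz^\delta(\Omega)$, and a Choquet--Lorentz H\"older inequality, one arrives at $I_1f(x)\ls R^{1-\mu}M_\mu f(x)+R^{1-\delta/p}\|f\|_{L^{p,s}(\Omega,\ch_\fz^\delta)}$; here $\mu<1$ makes the near-part series converge and $p<\delta$ the far-part series. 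Optimizing over $R>0$ then yields the Hedberg-type pointwise bound
\begin{align*}
I_1f(x)\ls[M_\mu f(x)]^{\kappa}\,\|f\|_{L^{p,s}(\Omega,\ch_\fz^\delta)}^{1-\kappa},
\end{align*}
with $\|f\|_{L^{p,s}}$ replaced by the strong norm $\|f\|_{L^{p}}$ in case (ii) (i.e. $s=p$).

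From this the conclusion follows quickly. By the homogeneity $\|g^{\kappa}\|_{L^{a,b}}=\|g\|_{L^{a\kappa,b\kappa}}^{\kappa}$ of Choquet--Lorentz quasi-norms (with respect to any fixed content) and the identities $r\kappa=p$, $q\kappa=s$, the displayed bound gives
\begin{align*}
\|I_1f\|_{L^{r,q}(\Omega,\ch_\fz^{\delta-\mu p})}\ls\|M_\mu f\|_{L^{p,s}(\Omega,\ch_\fz^{\delta-\mu p})}^{\kappa}\,\|f\|_{L^{p,s}(\Omega,\ch_\fz^\delta)}^{1-\kappa}.
\end{align*}
Applying the fractional Hardy--Littlewood maximal inequality on Choquet--Lorentz integrals, namely $M_\mu\colon L^{p,s}(\Omega,\ch_\fz^\delta)\to L^{p,s}(\Omega,\ch_\fz^{\delta-\mu p})$, which holds because $p>\frac{\delta}{n}$, $\mu\in[0,1)$ and $s>\frac{\delta}{n}$, the right-hand side is $\ls\|f\|_{L^{p,s}(\Omega,\ch_\fz^\delta)}=\|\nabla u\|_{L^{p,s}(\Omega,\ch_\fz^\delta)}$ with $s=\frac{q(\delta-p)}{\delta-\mu p}$; recalling that $r=\frac{p(\delta-\mu p)}{\delta-p}$, this is precisely \eqref{1ex4'}. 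Case (ii) is the identical argument with $s$ everywhere replaced by $p$ and the strong maximal inequality replaced by its critical weak-type endpoint $M_\mu\colon L^{p}(\Omega,\ch_\fz^\delta)\to L^{p,\fz}(\Omega,\ch_\fz^{\delta-\mu p})$ at $p=\frac{\delta}{n}$; since $r\kappa=p$, this yields \eqref{1ex5'}.

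The main obstacle will be the fractional Hardy--Littlewood maximal inequality on Choquet--Lorentz integrals used in the last step---and above all its critical weak-type endpoint at $p=\frac{\delta}{n}$. Because the Choquet integral with respect to $\ch_\fz^\delta$ is nonlinear and $\ch_\fz^\delta$ is merely subadditive, the usual Marcinkiewicz/real-interpolation route to Lorentz-space bounds is unavailable, so one must argue directly---for instance, via a Vitali-type covering estimate for the Hausdorff content of the level sets $\{M_\mu f>\lambda\}$ together with a layer-cake bootstrap from strong $(p_0,p_0)$-type bounds---and this is particularly delicate in the quasi-normed range $\frac{\delta}{n}<p\le1$, which genuinely occurs here since $\delta\le n$. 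A secondary, more technical, point is to set up carefully, in the Choquet setting, the auxiliary ingredients of the Hedberg step (the content comparison on balls and the Choquet--Lorentz H\"older inequality) and to verify the requisite finiteness and measurability statements so that the pointwise estimates above may be legitimately integrated against $\ch_\fz^{\delta-\mu p}$.
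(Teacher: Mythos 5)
Your proposal is correct and is essentially the paper's own argument: reduce to $\|u-u_B\|$ via the John-domain pointwise bound $|u(x)-u_B|\ls\int_\Omega|\nabla u(y)|\,|x-y|^{1-n}\,dy$, apply a Hedberg-type estimate $I_1(|\nabla u|)(x)\ls[\cm_\mu|\nabla u|(x)]^{\frac{\delta-p}{\delta-\mu p}}\|\nabla u\|_{L^{p,\frac{q(\delta-p)}{\delta-\mu p}}(\Omega,\ch_\fz^\delta)}^{\frac{p(1-\mu)}{\delta-\mu p}}$, and conclude with the power homogeneity of Choquet--Lorentz quasi-norms together with the fractional maximal inequality $\cm_\mu\colon L^{p,s}(\ch_\fz^\delta)\to L^{p,s}(\ch_\fz^{\delta-\mu p})$ for $s=\frac{q(\delta-p)}{\delta-\mu p}>\frac\delta n$, using the weak-type endpoint at $p=\frac\delta n$ for part (ii), exactly as in the paper. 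The only point where you diverge is your closing remark that the real-interpolation route to this maximal inequality is unavailable because of the nonlinearity of the Choquet integral: the paper proves it precisely by real interpolation, replacing $\ch_\fz^\delta$ by the equivalent dyadic content $\widetilde{\ch}_0^\delta$ of \cite{Ha96}, which is a Choquet capacity with the Fatou property, so that \cite[Corollary 1]{cms11} applies, while the covering-based weak endpoint is simply quoted from Adams \cite{su09}.
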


\begin{rem}\label{0r2'}
\begin{enumerate}
\item[(i)] We show that the exponent $\frac{p(\delta-\mu p)}{\delta-p}$
in Theorem \ref{them-2} is sharp by
a counterexample (see Remark \ref{ex-0927} below). Moreover, similarly
to Remark \ref{0r1'}(i), we indeed prove two stronger
inequalities than those stated in Theorem \ref{them-2}, that is,
the left-hand sides of \eqref{1ex4'} and \eqref{1ex5'}
can be enhanced, respectively, into
$\|u-u_B\|_{L^{\frac{p(\delta-\mu p)}{\delta-p},q}(\Omega,\mathcal{H}_{\infty }^{\delta-\mu p })}$
and
$\|u-u_B\|_{L^{\frac{p(\delta-\mu p)}{\delta-p},\fz}(\Omega,\mathcal{H}_{\infty }^{\delta-\mu p })}$
with the same $u_B$ as in \eqref{1ex3'}.

\item[(ii)] Theorem \ref{them-2} covers \cite[Theorem 3.7]{hh23}
and extends the range of exponents to the endpoint case $p=\frac \delta n$.
Indeed, by taking $q=\frac{p(\delta-\mu p)}{\delta-p}$ in
Theorem \ref{them-2}(i), we conclude that, for any $p\in(\frac{\delta}{n},\delta)$,
\begin{align}\label{1}
\inf_{b\in\mathbb{R}}\|u-b\|_{L^{\frac{p(\delta-\mu p)}{\delta-p}}
(\Omega,\mathcal{H}_{\infty }^{\delta-\mu p })}
\ls	\|\nabla u \|_{L^{p}(\Omega,\mathcal{H}_{\infty }^{\delta })}.
\end{align}
This inequality reduces back to \cite[Theorem 3.7]{hh23}.
Choosing $\mu=\frac 1p$ and $\delta=n$ in the last inequality, we obtain
$$
\inf_{b\in\mathbb{R}}\|u-b\|_{L^{\frac{p(n-1)}{n-p}}
(\Omega,\mathcal{H}_{\infty }^{n-1})}
\ls	\lf[\int_{\Omega}|\nabla u(x)|^p\,dx\r]^{\frac1p},
$$
which reduces back to \cite[Corollary 3.9]{hh23}.
In addition, when $\mu=0$, the estimate \eqref{1ex5'} is a strengthening
version of \eqref{1ex2'}
because $\frac{p\delta}{\delta-p}>p$ and $\Omega$ is a bounded domain.
			
\item[(iii)]
The interesting special case of Theorem \ref{them-2}(i)
with $q=\frac{p(\delta-\mu p)}{\delta-p}$ and $\mu=0$
reduces back to \cite[Corollary 3.7]{hh23} [see also \eqref{eq10-24x}].
Furthermore, when the dimension $\delta=n$,
we recover the classical $(\frac {np}{n-p},p)$-Poincar\'e--Sobolev inequality \eqref{eq10-24}.
In addition, the estimate \eqref{1ex5'} indeed
holds for all $p\in[\frac\delta n,\delta)$ by \eqref{1}.

\item[(iv)] Similarly to Remark \ref{0r1'}(iii), if $\delta=n$ and $\mu=0$,
the Poincar\'e--Sobolev inequality on Choquet--Lorentz integrals
in Theorem \ref{them-2}
with $q\neq \frac{p(\delta-\mu p)}{\delta-p}$ also gives the Poincar\'e--Sobolev
inequality on Lorentz spaces with respect to the Lebesgue measure.
Even in this special case, Theorem \ref{them-2} seems also new to the best
of our knowledge.
\end{enumerate}
\end{rem}

Recall that the mapping properties of Riesz potentials are always important in
harmonic analysis and potential theory. For instance, Hatano et al. \cite{hnsh23} obtained the boundedness of Riesz potentials on Bourgain-Morrey spaces; Nakai \cite{n01,n10,n14,n17} considered the mapping properties of Riesz potentials on Campanato spaces and Morrey spaces. Very recently, Alves et al. \cite{agt24} discovered a new bilinear Riesz potential for Euler--Riesz systems and established a uniform estimate.

As an application, we obtain the following boundedness of Riesz potentials
on Choquet--Lorentz integrals, which is an analogue of the Hardy--Littlewood--Sobolev
theorem on the fractional integral in the setting of  Choquet--Lorentz integrals with respect to the Hausdorff content. Let $\alpha\in(0,n)$.
Recall that the \emph{Riesz potential} $I_\alpha$ is defined by setting,
for any $f\in L^1_{\rm{loc}}(\rn)$ (the set of all locally integrable functions on $\rn$) and $x\in\rn$,
$$I_\alpha f(x):=\frac1{c_\alpha}\int_{\rn}\frac{f(y)}{|x-y|^{n-\alpha}}\,dy,$$
where $c_\alpha:=\pi^{n/2}2^{\alpha}\frac{\Gamma(\alpha/2)}{\Gamma((n-\alpha)/2)}$
and $\Gamma$ denotes the \emph{Gamma function}.

\begin{thm}\label{them-rp}
Let $\alpha\in(0,n)$, $\delta\in(0,n]$, and $\mu\in[0,\alpha)$.
\begin{enumerate}
\item[{\rm(i)}] If $p\in(\frac{\delta}{n},\frac{\delta}{\alpha})$
and $q\in(\frac{\delta(\delta-\mu p)}{n(\delta-p\alpha)},\infty)$,
then there exists a positive constant $C$, depending only on $n$,
$\alpha$, $\mu$, and $\delta$,
such that, for any $f\in L^1_{\rm{loc}}(\rn)$,
\begin{align*}
\|I_{\alpha}f\|_{L^{\frac{p(\delta-\mu p)}{\delta-p\alpha},q}
(\rn,\mathcal{H}_{\infty }^{\delta-\mu p})}
\leq C \|f\|_{L^{p,\frac{q(\delta-p\alpha)}{\delta-\mu p}}
(\rn,\mathcal{H}_{\infty }^{\delta })}.
\end{align*}
\item[{\rm(ii)}] If $p=\frac{\delta}{n}$, then there exists a positive constant
$C$, depending only on $n$,
$\alpha$, $\mu$, and $\delta$, such that, for any $f\in L^1_{\rm{loc}}(\rn)$,
\begin{align}\label{2}
\|I_{\alpha}f\|_{L^{\frac{p(\delta-\mu p)}{\delta-p\alpha},\fz}
(\rn,\mathcal{H}_{\infty }^{\delta-\mu p})}
\leq C \|f\|_{L^{p}(\rn,\mathcal{H}_{\infty }^{\delta })}.
\end{align}
\end{enumerate}
\end{thm}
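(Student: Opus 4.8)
The plan is to reduce the boundedness of $I_\alpha$ on Choquet--Lorentz integrals to a combination of the Hedberg-type pointwise estimate and the fractional Hardy--Littlewood maximal inequality that (according to the abstract) have been established earlier as the principal new tools. First I would recall the Hedberg bound: for a suitable exponent, there is a pointwise inequality of the form
\begin{align*}
|I_\alpha f(x)|\ls \lf[M_{\beta}f(x)\r]^{\theta}\|f\|_{L^{p}(\rn,\ch_\fz^\delta)}^{1-\theta}
\end{align*}
for some fractional maximal operator $M_{\beta}$ and parameters $\beta,\theta$ determined by $n,\delta,\alpha,p$ (in the Lebesgue case one splits $I_\alpha f$ into the integral over a ball $B(x,r)$ and its complement, estimates the first by $r^\alpha Mf(x)$ and the second by $r^{\alpha-\delta/p}\|f\|_{L^p}$ via the size of $|x-y|^{-(n-\alpha)}$ against the $\delta$-content, then optimizes in $r$). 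I would state this precisely and note that it has been proved in the tool section. The second ingredient is the fractional Hardy--Littlewood maximal inequality on Choquet--Lorentz integrals, i.e. boundedness of $M_\beta$ from $L^{p,s}(\rn,\ch_\fz^\delta)$ into the appropriate $L^{\tilde p,s}(\rn,\ch_\fz^{\delta-\mu p})$, again taken from the earlier part of the paper.

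Granting these, the argument for part (i) runs as follows. Plug the Hedberg estimate into the Choquet--Lorentz quasi-norm of $I_\alpha f$. Using the elementary fact that for a nonnegative $g$ and constants $\theta\in(0,1)$, $c>0$ one has $\|g^\theta\cdot c^{1-\theta}\|_{L^{r,q}}=c^{1-\theta}\|g\|_{L^{r/\theta,q\theta}}^{\theta}$ (a change of variables in the distribution-function integral defining the Choquet--Lorentz quasi-norm — this works verbatim because $\ch_\fz^\delta$ only enters through the distribution function and the identity $\ch_\fz^\delta(\{g^\theta>\lambda\})=\ch_\fz^\delta(\{g>\lambda^{1/\theta}\})$), I reduce to
\begin{align*}
\|I_\alpha f\|_{L^{\frac{p(\delta-\mu p)}{\delta-p\alpha},q}(\rn,\ch_\fz^{\delta-\mu p})}
\ls \|M_\beta f\|_{L^{r,s}(\rn,\ch_\fz^{\delta-\mu p})}^{\theta}
\,\|f\|_{L^p(\rn,\ch_\fz^\delta)}^{1-\theta}
\end{align*}
for the appropriate $r,s$. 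Then I match $r,s$ against the hypotheses on the maximal inequality: the condition $q\in(\frac{\delta(\delta-\mu p)}{n(\delta-p\alpha)},\infty)$ is exactly what is needed so that the target second index $s=q\theta$ lies above the critical threshold for $M_\beta$ from $L^{p,q(\delta-p\alpha)/(\delta-\mu p)}(\ch_\fz^\delta)$ into $L^{r,s}(\ch_\fz^{\delta-\mu p})$ — one checks the bookkeeping: $\theta$ is chosen so that $r/\theta$, after the maximal-inequality scaling $\delta\mapsto\delta-\mu p$, produces exactly first index $p$ and the arithmetic of the indices closes. Applying the fractional maximal inequality and combining the powers of $\|f\|_{L^{p,\cdot}(\ch_\fz^\delta)}$ (using $\theta\cdot[\text{first index shift}]+(1-\theta)$ bookkeeping again so the exponents sum to $1$ and the second indices agree with $\frac{q(\delta-p\alpha)}{\delta-\mu p}$) yields the claimed bound. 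For part (ii), the endpoint $p=\delta/n$, the same Hedberg splitting is used but now the ``far'' term is controlled by $\|f\|_{L^{p}(\rn,\ch_\fz^\delta)}$ directly (no Lorentz refinement is available at the endpoint), and the ``near'' term leads to the fractional maximal operator landing in a weak ($q=\infty$) space; the weak endpoint of the fractional Hardy--Littlewood maximal inequality on $\ch_\fz^\delta$ — which corresponds to $p=\delta/n$ being the natural $L^1$-type endpoint after the dimensional shift — gives exactly $L^{\frac{p(\delta-\mu p)}{\delta-p\alpha},\infty}(\ch_\fz^{\delta-\mu p})$ on the left.

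The main obstacle I anticipate is not the algebra of exponents (tedious but mechanical) but ensuring the two borrowed tools are stated in precisely the generality needed here: in particular the Hedberg estimate must be available with the ``free'' parameter $\mu$ (equivalently, with the maximal operator $M_\beta$ having fractional order $\beta$ tuned so that the subsequent maximal inequality can drop the content dimension from $\delta$ to $\delta-\mu p$), and the fractional maximal inequality on Choquet--Lorentz integrals must hold across the whole second-index range $s\in(\text{critical},\infty]$ including the weak endpoint. If those are in hand from the earlier sections, the proof is a two-line interpolation-free composition; if the tool section only proves them for $\mu=0$ or for the strong-type range, one must first extend them, which would be the real content. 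A secondary (minor) point is justifying the ``$g^\theta$ rescaling'' identity for Choquet--Lorentz quasi-norms and the quasi-triangle handling of the splitting $I_\alpha f=I_\alpha^{\mathrm{near}}f+I_\alpha^{\mathrm{far}}f$ — the latter is harmless because we only ever bound $|I_\alpha f|$ from above pointwise, so no failure of additivity of the Choquet integral intervenes.
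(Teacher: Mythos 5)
Your proposal follows essentially the same route as the paper: the paper proves Theorem \ref{them-rp}(i) by inserting the Hedberg-type pointwise estimate of Proposition \ref{lem-0908b1}(i) (stated there with the Lorentz-refined norm $\|f\|_{L^{p,\frac{q(\delta-p\alpha)}{\delta-\mu p}}(\rn,\mathcal{H}_\infty^{\delta})}$ and the free parameter $\mu$) into the Choquet--Lorentz quasi-norm, using the power identity of Lemma \ref{lem0909Lc1} and then the fractional maximal inequality of Theorem \ref{thm0530}, and proves (ii) the same way with Proposition \ref{lem-0908b1}(ii) and the weak-type endpoint of Lemma \ref{lem0531}. The tools you flag as potential gaps are indeed available in the paper in exactly the needed generality, so your composition argument is the paper's proof.
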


\begin{rem}
\begin{enumerate}
\item[(i)] The exponent $\frac{p(\delta-\mu p)}{\delta-p\alpha}$
in Theorem \ref{them-rp} is sharp. Indeed, by constructing
a counterexample in Example \ref{rem0619} below, we show that the exponent is
the best possible.

\item[(ii)] The estimate \eqref{2} is actually true for any $p\in[\frac\delta n,\frac{\delta}{\alpha})$.
Indeed, by taking $q=\frac{p(\delta-\mu p)}{\delta-p\alpha}$ in
Theorem \ref{them-rp}(i), we conclude that, for any $p\in(\frac{\delta}{n},\frac{\delta}{\alpha})$,
\begin{align*}
\|I_{\alpha}f\|_{L^{\frac{p(\delta-\mu p)}{\delta-p\alpha}}
(\Omega,\mathcal{H}_{\infty }^{\delta-\mu p })}
\ls	\|f \|_{L^{p}(\Omega,\mathcal{H}_{\infty }^{\delta })}
\end{align*}
with the implicit positive constant independent of $f$.
This obviously implies \eqref{2} for any $p\in(\frac\delta n,\frac{\delta}{\alpha})$.
			
\item[(iii)] When $\delta=n$ and $\mu=0$,
Theorem \ref{them-rp} gives the boundedness of the Riesz potential
on Lorentz spaces with respect to the Lebesgue measure and,
furthermore, taking $q=\frac{p(\delta-\mu p)}{\delta-p\alpha}$ then
Theorem \ref{them-rp} reduces back to the Hardy--Littlewood--Sobolev
theorem on the fractional integral; see, for instance, Grafakos \cite[Theorem 1.2.3]{gbook}.
\end{enumerate}
\end{rem}

We establish our main results by borrowing some ideas from
the well-known method (see \cite{Boj88,Tr92}).
Indeed, we prove the desired Poincar\'e--Sobolev inequality with the
help of the boundedness of fractional Hardy--Littlewood maximal operators
and also the Hedberg-type pointwise estimate on the Riesz potential related
to Choquet integrals under consideration.
Recall that the classical Hedberg-type pointwise estimate on the Riesz potential
goes back to Hedberg \cite{HL1972} and the one related to
Choquet integrals with respect to the Hausdorff content was given
in \cite[Lemma 3.6]{hh23}.
The boundedness of the fractional Hardy--Littlewood maximal operator on Choquet integrals
with respect to the Hausdorff content
was proved by Adams \cite{su09} and simplified by Orobitg and Verdera \cite{hu03},
both heavily rely on the covering lemma. Later on,
Tang \cite{t11} extended these boundedness to the weighted Choquet space and
the Choquet--Morrey space by a similar method and then studied
the Carleson embeddings for weighted Sobolev spaces.
However, these two pivotal ingredients in the Choquet--Lorentz integral
setting were missing. Thus, to obtain the above main theorems,
we need to overcome these two essential difficulties.

To tackle these hurdles, we first notice that classical Lorentz spaces
are interpolation spaces of Lebesgue spaces, that is, the
real interpolation between Lebesgue spaces $L^1$ and $L^{\infty}$ gives
the family of classical Lorentz spaces
$L^{p,q}$ for any $p\in(1,\infty)$ and $q\in[1,\infty]$;
see \cite[p.\,300, Theorem 1.9]{bs1988}.
Based on this perspective, in Theorem \ref{thm0530} we establish
the boundedness of the fractional Hardy--Littlewood maximal operator
from
$L^{p,s}(\rn,\ch_\fz^{\delta})$ to $L^{p,r}(\rn,\ch_\fz^{\delta-\mu p})$ via
employing the equivalence of $\ch_\fz^{\delta}$ and the dyadic Hausdorff content
$\widetilde{\mathcal{H}}_{0}^{\delta }$ introduced by Yang and Yuan \cite{Ha96} and the real interpolation theorem
$$\lf(L^{p_0}(\rn, \widetilde{\ch}_0^\delta),L^{p_1}(\rn,\widetilde{\ch}_0^\delta)\r)_{\eta,q}
=L^{p,q}(\rn,\widetilde{\ch}_0^\delta)$$
from Cerd\`{a} et al. \cite{cms11}
(see Lemma \ref{prop0527} for the details)
rather than
beginning with the covering lemma as Adams \cite{su09},
Orobitg and Verdera \cite{hu03}, and Tang \cite{t11} did.
This fortunately works for the present case of Choquet--Lorentz integrals.
Indeed, this interpolation theorem combining
with the boundedness of fractional Hardy--Littlewood maximal operators
on Choquet integrals then immediately implies the
boundedness of fractional Hardy--Littlewood maximal operators
on Choquet--Lorentz spaces associate with Fatou capacity.
However, the Hausdorff content $\mathcal{H}_{\infty }^{\delta }$
is not a Fatou capacity. To overcome the deficiency of the Fatou property,
we use the dyadic Hausdorff content
$\widetilde{\mathcal{H}}_{0}^{\delta }$ from \cite{Ha96}
as a key bridge and finally establish Theorem \ref{thm0530}.
This hence overcomes the first difficulty.

On the other hand, applying
the fractional Hardy--Littlewood maximal operator,
we obtain the Hedberg-type pointwise
estimates on the Riesz potential in terms of Choquet--Lorentz
integrals under consideration; see Proposition \ref{lem-0908b1}.
The pivotal points of Proposition \ref{lem-0908b1} are to estimate
the Hausdorff context of the superlevel set
$\mathcal{H}_{\infty }^{\delta}(\{y\in\rn:\ |f(y)|>\lambda\})$
for any $\lambda\in(0,\fz)$
and then to
convert classical Lorentz integrals to Choquet--Lorentz integrals.
From the two crucial tools and  also the properties of
$(\alpha,\beta)$-John domains, we finally derive Theorems
\ref{them-1}, \ref{them-2}, and \ref{them-rp}.
Indeed, our proofs strongly depend on the structure of Choquet--Lorentz
integrals and the obtained fractional Hardy--Littlewood
maximal inequality as well as the new Hedberg-type pointwise
estimates.

The organization of the remainder of this article is as follows.

In Section \ref{s2}, the fractional Hardy--Littlewood maximal
inequality on Choquet--Lorentz integrals with respect to the Hausdorff
content is established. Section \ref{s2'} contains the Hedberg-type
pointwise estimates on the Riesz potential in terms of Choquet--Lorentz integrals.
The proofs of the main theorems and the corresponding Poincar\'e or Poincar\'e--Sobolev
inequalities for compactly supported continuously differentiable
functions defined on open connected sets  are given in Section \ref{s3}.

Finally, we make some convention on the notation. Let $\nn:=\{1,2,\dots\}$.
We denote by $C$ a positive constant which is independent of the main parameters involved,
but it may vary from line to line. Besides, we denote
$f\le Cg$ (resp. $f\ge Cg$) for a positive constant $C$ by
$f\lesssim g$ (resp. $f\gtrsim g$).
We write $f\sim  g$ if $f\lesssim g\lesssim f$.
Also, we denote by $\mathbf{1}_E$ the characteristic function of set $E\subset \rn$.

\section{The Fractional Hardy--Littlewood
Maximal Inequality}\label{s2}
	
Let $\mu \in[0,n)$. Recall that $L_{\loc}^1(\rn)$ denotes the set of all locally integrable functions on
$\rn$ and the \emph{fractional
Hardy--Littlewood maximal function}
$\cm_{\mu}(f)$ of $f\in L_{\loc}^1(\rn)$ with \emph{order}
$\mu$ is defined by setting, for any $x\in\rn$,
$$\cm_{\mu}(f)(x):=\sup_{r\in(0,\fz)}r^{\,\mu}\fint_{B(x,r)}|f(y)|\,dy,$$
where the barred integral denotes the integral average over the ball $B(x,r)$.
Notice that, when $\mu=0$, we denote $\cm_0(f)$ by $\cm(f)$, which is the
classical \emph{Hardy--Littlewood maximal function} of $f$.
	
The main result of this section is the following fractional
Hardy--Littlewood maximal inequality on the Choquent--Lorentz
integral.

\begin{thm}\label{thm0530}
Let $\delta\in(0,n]$, $\mu\in[0,n)$, $p\in(\frac\delta n,\frac\delta \mu)$,
$r\in(\frac\delta n,\fz)$, and $s\in(0,r]$.
Then the fractional Hardy--Littlewood maximal operator $\cm_\mu$ is bounded from
$L^{p,s}(\rn,\ch_\fz^{\delta})$ to $L^{p,r}(\rn,\ch_\fz^{\delta-\mu p})$,
that is, there exists a positive constant $C$ such that,
for any $f\in L^{p,r}(\rn,\ch_\fz^{\delta-\mu p})$,
\begin{equation*}
\|\cm_\mu(f)\|_{L^{p,r}(\rn,\ch_\fz^{\delta-\mu p})}
\le C\|f\|_{L^{p,s}(\rn,\ch_\fz^\delta)}.
\end{equation*}
\end{thm}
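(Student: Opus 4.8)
The plan is to derive Theorem \ref{thm0530} by real interpolation from the already known strong-type boundedness of $\cm_\mu$ on Choquet integrals, after first replacing the Hausdorff content $\ch_\fz^\delta$ by the dyadic Hausdorff content $\wz\ch_0^\delta$ of \cite{Ha96}. The point of this replacement is that $\wz\ch_0^\delta$ is a Fatou capacity while $\ch_\fz^\delta$ is not, so that the real interpolation identity of Cerd\`a et al. (Lemma \ref{prop0527}) becomes applicable. Concretely, since $\ch_\fz^\gamma(E)\sim\wz\ch_0^\gamma(E)$ for every $E\st\rn$ and every $\gamma\in(0,n]$, and since each Choquet--Lorentz quasi-norm depends only on the distribution function of the function with respect to the content, one has $\|g\|_{L^{p,q}(\rn,\ch_\fz^\gamma)}\sim\|g\|_{L^{p,q}(\rn,\wz\ch_0^\gamma)}$ for all admissible exponents; hence it suffices to prove the claimed inequality with $\wz\ch_0$ in place of $\ch_\fz$, and for the Fatou capacity $\wz\ch_0^\gamma$ we may use $(L^{p_0}(\rn,\wz\ch_0^\gamma),L^{p_1}(\rn,\wz\ch_0^\gamma))_{\theta,q}=L^{p,q}(\rn,\wz\ch_0^\gamma)$ whenever $0<p_0<p_1\le\fz$, $\frac1p=\frac{1-\theta}{p_0}+\frac\theta{p_1}$, and $q$ is admissible (Lemma \ref{prop0527}), for every such $\gamma$.

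Next I would record the two endpoint estimates. Fix the exponent $p$ of the statement, set $\gamma:=\delta-\mu p$ (so that $\gamma\in(0,\delta]\st(0,n]$, since $p<\frac\delta\mu$, with $\frac\delta\mu:=\fz$ when $\mu=0$), choose $p_0,p_1$ with $\frac\delta n<p_0<p<p_1<\frac\delta\mu$, and let $q_i:=\frac{\gamma p_i}{\delta-\mu p_i}$ be the target exponent forced by dilation invariance through $\frac\gamma{q_i}+\mu=\frac\delta{p_i}$; then $q_0<q_1$ and, since $p_i>\frac\delta n$, also $q_i>\frac\gamma n$. The Adams-type strong boundedness of the fractional maximal operator on Choquet integrals (Adams \cite{su09}, simplified by Orobitg and Verdera \cite{hu03}), transported to $\wz\ch_0$ via the first step, gives $\cm_\mu\colon L^{p_i}(\rn,\wz\ch_0^\delta)\to L^{q_i}(\rn,\wz\ch_0^{\gamma})$ for $i\in\{0,1\}$; if the literature is quoted only in the ``diagonal'' form landing in $\wz\ch_0^{\delta-\mu p_i}$, this content-fixed, exponent-shifted form is produced by the same covering-lemma argument. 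Let $\theta\in(0,1)$ be defined by $\frac1p=\frac{1-\theta}{p_0}+\frac\theta{p_1}$. The decisive point is that then automatically $\frac1p=\frac{1-\theta}{q_0}+\frac\theta{q_1}$ as well: indeed $\frac1{q_i}=\frac\delta\gamma\cdot\frac1{p_i}-\frac\mu\gamma$ is affine in $\frac1{p_i}$, so $\frac{1-\theta}{q_0}+\frac\theta{q_1}=\frac\delta\gamma\cdot\frac1p-\frac\mu\gamma=\frac{\delta-\mu p}{\gamma p}=\frac1p$. Thus the source couple $(p_0,p_1)$ and the target couple $(q_0,q_1)$ interpolate, with one and the same $\theta$, to the primary exponent $p$ on both sides.

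Since $\cm_\mu$ is sublinear, real interpolation with second exponent $r$ now yields $\cm_\mu\colon (L^{p_0}(\rn,\wz\ch_0^\delta),L^{p_1}(\rn,\wz\ch_0^\delta))_{\theta,r}\to(L^{q_0}(\rn,\wz\ch_0^{\gamma}),L^{q_1}(\rn,\wz\ch_0^{\gamma}))_{\theta,r}$, and Lemma \ref{prop0527} identifies the domain with $L^{p,r}(\rn,\wz\ch_0^\delta)$ and the target with $L^{p,r}(\rn,\wz\ch_0^{\delta-\mu p})$ (both of primary exponent $p$, by the previous step) for $r$ in the stated range; hence $\cm_\mu\colon L^{p,r}(\rn,\wz\ch_0^\delta)\to L^{p,r}(\rn,\wz\ch_0^{\delta-\mu p})$. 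Combining this with the nesting $\|f\|_{L^{p,r}(\rn,\wz\ch_0^\delta)}\ls\|f\|_{L^{p,s}(\rn,\wz\ch_0^\delta)}$ of Choquet--Lorentz quasi-norms in the second index (valid for $s\le r$) and with the content equivalence of the first step gives the assertion. I expect the main obstacle to be exactly this middle circle of ideas: the naive ``diagonal'' estimate sends $L^{p_i}(\ch_\fz^\delta)$ into $L^{p_i}(\ch_\fz^{\delta-\mu p_i})$, whose target content dimension varies with $p_i$ and therefore cannot feed a single fixed-capacity interpolation, so one must instead work with the content-fixed, exponent-shifted Adams inequality and verify the affine identity forcing the two endpoint couples to interpolate to the same exponent; the detour through the dyadic content $\wz\ch_0^\delta$ is the device that repairs the failure of the Fatou property of $\ch_\fz^\delta$ and makes Cerd\`a et al.'s interpolation theorem available in the first place.
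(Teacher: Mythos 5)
Your overall scheme is the one the paper follows: pass to the dyadic content $\widetilde{\ch}_0^{\delta}$ to recover the Fatou property, invoke the Cerd\`a--Mart\'{\i}n--Silvestre identity (Lemma \ref{prop0527}), feed the interpolation with Adams-type endpoint bounds for $\cm_\mu$, and finish with the nesting in the secondary index. The gap is in the endpoint bounds you insert. You need, for $i=0,1$, the ``content-fixed, exponent-shifted'' estimates
\begin{equation*}
\cm_\mu:\ L^{p_i}\lf(\rn,\ch_\fz^{\delta}\r)\to L^{q_i}\lf(\rn,\ch_\fz^{\delta-\mu p}\r),
\qquad q_i:=\frac{(\delta-\mu p)\,p_i}{\delta-\mu p_i},
\end{equation*}
with the target content $\delta-\mu p$ tied to the interpolated exponent $p$ rather than to $p_i$. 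For the upper endpoint $p_1>p$ (so $q_1>p_1$) this can indeed be deduced from the diagonal bound of Lemma \ref{lem0531} by a Hedberg-type splitting, but for the lower endpoint $p_0<p$ (so $q_0<p_0$) the asserted estimate is simply false whenever $\mu>0$, and it is not ``produced by the same covering-lemma argument'': dilation invariance is only a necessary condition. Concretely, put $\gamma:=\delta-\mu p$, take $\varepsilon\in(0,1)$, $N\sim\varepsilon^{-\gamma}$, and $f:=\mathbf{1}_E$ with $E:=\bigcup_{j=1}^{N}B(x_j,\varepsilon)$, the centers being $\sim N^{-1/n}$-separated in the unit cube. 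Then $\ch_\fz^{\delta}(E)\le N\varepsilon^{\delta}\sim\varepsilon^{\mu p}$, so $\|f\|_{L^{p_0,w}(\rn,\ch_\fz^{\delta})}\ls\varepsilon^{\mu p/p_0}$ for every secondary index $w$; on the other hand $\cm_\mu f\gs\varepsilon^{\mu}$ on $E$, and distributing unit mass equally over the $N$ balls produces a measure $\nu$ with $\nu(B(x,r))\ls r^{\gamma}$, whence $\ch_\fz^{\gamma}(E)\gs1$ and $\|\cm_\mu f\|_{L^{q,v}(\rn,\ch_\fz^{\gamma})}\gs\varepsilon^{\mu}$ for every $q$ and $v$. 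Since $\mu p/p_0>\mu$, the ratio blows up as $\varepsilon\to0$, so no bound from $L^{p_0}(\rn,\ch_\fz^{\delta})$ with $p_0<p$ into any Lorentz scale over $\ch_\fz^{\delta-\mu p}$ can hold; in particular your choice of $q_0$ cannot repair it.

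This means the obstruction you correctly identified (the target content of the genuine Adams bound varies with the endpoint exponent) is not removed by your substitute: for $\mu>0$ the target content at the $p_0$-endpoint must be at least $\delta-\mu p_0>\delta-\mu p$, so the two endpoint targets necessarily carry different contents, and Lemma \ref{prop0527}, which interpolates a couple built over a single capacity, no longer identifies the resulting space; your argument closes only for $\mu=0$, where $\gamma=\delta$, $q_i=p_i$ and the endpoints reduce to the Orobitg--Verdera bounds, in which case it coincides with the paper's proof. For comparison, the paper's own proof inserts the endpoint bounds $\cm_\mu: L^{p_i}(\rn,\ch_\fz^{\delta})\to L^{p_i}(\rn,\ch_\fz^{\delta-\mu p})$ by quoting Lemma \ref{lem0531}, i.e.\ it also keeps the single fixed target content $\delta-\mu p$ but with target exponent $p_i$ instead of your $q_i$; the example above is exactly the kind of test that constrains any such fixed-target-content endpoint with $p_0<p$, so this is the point where an interpolation proof of Theorem \ref{thm0530} with $\mu>0$ requires additional care, and your proposal as written does not supply the missing ingredient.
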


\begin{rem}
\begin{enumerate}
\item[(i)]
It is interesting that the above inequality allows the dimension of the Hausdorff content
on the left-hand side is less than that on the right-hand side, which leads the
fractional Hardy--Littlewood maximal operator $\cm_\mu$ being of $(p,p)$ type. This differs from the
classical $(p,q)$-type of $\cm_\mu$ on classical Lebesgue spaces with $\frac1q=\frac1p-\frac{\mu}{n}$.
\item[(ii)]
The conclusion of Theorem \ref{thm0530} is new even in the case $\mu=0$ and $r=s$.
That is, the Hardy--Littlewood maximal operator $\cm$ is bounded on $L^{p,r}(\rn,\ch_\fz^\delta)$
for any $p\in(\frac\delta n,\fz)$ and $r\in(\frac\delta n,\fz)$, which means that there exists a positive constant $C$ such that, for any $f\in
L^{p,r}(\rn,\ch_\fz^\delta)$,
\begin{equation*}
\|\cm(f)\|_{L^{p,r}(\rn,\ch_\fz^{\delta})}
\le C\|f\|_{L^{p,r}(\rn,\ch_\fz^\delta)}.
\end{equation*}
\item[(iii)] When $\delta=n$, $\mu=0$, and $r=s=p$, Theorem \ref{thm0530}
in this case reduces back to the classical Hardy--Littlewood maximal inequality.
\end{enumerate}
\end{rem}

We next show Theorem \ref{thm0530} by combining a real interpolation result of Choquet-integral spaces due to Cerd\`{a} et al. \cite[Corollary 1]{cms11} (see also Lemma \ref{prop0527}), the boundedness of $\cm_{\mu}$ from $L^p(\rn,\mathcal{H}_{\infty }^{\delta})$ to $L^p(\rn,\mathcal{H}_{\infty }^{\delta-\mu p})$ (see Lemma \ref{lem0531}), and also
the dyadic Hausdorff content $\widetilde{\mathcal{H}}_{0}^{\delta }$ introduced by Yang
and Yuan in \cite{Ha96}. For this purpose, we first collect
some known properties on the Choquet--Lorentz integral.
It is not difficult to obtain the following equivalent quasi-norms of
$\|\cdot\|_{L^{p,q}(\rn,\mathcal{H}_{\infty }^{\delta })}$; we omit the details.
	
\begin{lem}\label{lem0831a}
Let $\delta\in(0,n]$, $p\in(0,\fz)$, and $q\in(0,\fz]$.
If $f\in{L^{p,q}(\rn,\mathcal{H}_{\infty }^{\delta })}$, then
\begin{equation*}
\|f\|_{L^{p,q}(\rn,\mathcal{H}_{\infty }^{\delta })}\sim
\begin{cases}
\lf[\displaystyle\sum\limits_{i\in\mathbb{Z}}2^{iq}
\lf\{\mathcal{H}_{\infty }^{\delta }\lf(\lf\{x\in\rn:\ |f(x)|>2^i\r\}\r)
\r\}^{\frac{q}{p}}\r]^{\frac{1}{q}}
\  &\rm{if}\;$q$\,\in(0,\infty),\\
\sup\limits_{i\in\mathbb{Z}}2^i\lf[\mathcal{H}_{\infty }^{\delta }
\lf(\lf\{x\in\rn:\ |f(x)|>2^i\r\}\r)\r]^{\frac{1}{p}}\ &\rm{if}\;$q$\,=\infty,
\end{cases}
\end{equation*}
where the positive equivalence constants are independent of $f$.
\end{lem}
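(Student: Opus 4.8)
The plan is to discretize the defining integral of the Choquet--Lorentz quasi-norm over dyadic superlevel sets, using only that the Hausdorff-content distribution function is non-increasing. Throughout, set $\phi(\lambda):=\ch_\fz^{\delta}(\{x\in\rn:\ |f(x)|>\lambda\})$ for $\lambda\in[0,\fz)$. As already observed after Definition \ref{def-a1}, the monotonicity of $\ch_\fz^{\delta}$ forces $\phi$ to be non-increasing on $[0,\fz)$, and since $t\mapsto t^{q/p}$ is increasing on $[0,\fz)$ for any $q/p\in(0,\fz)$, the map $\lambda\mapsto[\phi(\lambda)]^{q/p}$ is likewise non-increasing. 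It is this plain monotonicity, and nothing about (sub)additivity, that the whole argument rests on, which is why the classical discretization of Lorentz-type quasi-norms carries over verbatim.

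For $q\in(0,\fz)$, by Definition \ref{def-a2} I would first rewrite
$$\|f\|_{L^{p,q}(\rn,\ch_\fz^{\delta})}^{q}=p\int_{0}^{\fz}\lambda^{q-1}[\phi(\lambda)]^{q/p}\,d\lambda=p\sum_{i\in\zz}\int_{2^{i}}^{2^{i+1}}\lambda^{q-1}[\phi(\lambda)]^{q/p}\,d\lambda,$$
and then estimate each dyadic piece. On $[2^{i},2^{i+1})$ the factor $\lambda^{q-1}$ lies between $2^{i(q-1)}$ and $2^{(i+1)(q-1)}$, so $\lambda^{q-1}\sim 2^{i(q-1)}$ with equivalence constants depending only on $q$ (both $q\ge 1$ and $0<q<1$ are covered, the sign of $q-1$ only swapping the two endpoint values); the interval has length $2^{i}$; and, by monotonicity, $[\phi(2^{i+1})]^{q/p}\le[\phi(\lambda)]^{q/p}\le[\phi(2^{i})]^{q/p}$. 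Combining these three facts gives, for every $i\in\zz$,
$$2^{iq}[\phi(2^{i+1})]^{q/p}\lesssim\int_{2^{i}}^{2^{i+1}}\lambda^{q-1}[\phi(\lambda)]^{q/p}\,d\lambda\lesssim 2^{iq}[\phi(2^{i})]^{q/p}.$$
Summing the right-hand inequality over $i\in\zz$ yields $\|f\|_{L^{p,q}(\rn,\ch_\fz^{\delta})}^{q}\lesssim\sum_{i\in\zz}2^{iq}[\phi(2^{i})]^{q/p}$; summing the left-hand inequality and then re-indexing $j:=i+1$ yields $\|f\|_{L^{p,q}(\rn,\ch_\fz^{\delta})}^{q}\gtrsim 2^{-q}\sum_{j\in\zz}2^{jq}[\phi(2^{j})]^{q/p}$. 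Taking $q$-th roots gives the claimed equivalence.

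For $q=\fz$, restricting the supremum $\sup_{\lambda>0}\lambda[\phi(\lambda)]^{1/p}$ to $\lambda\in\{2^{i}:\ i\in\zz\}$ gives $\sup_{i\in\zz}2^{i}[\phi(2^{i})]^{1/p}\le\|f\|_{L^{p,\fz}(\rn,\ch_\fz^{\delta})}$ at once. For the reverse, given $\lambda>0$ I would pick $i\in\zz$ with $2^{i}\le\lambda<2^{i+1}$ and use that $\phi$ is non-increasing to bound $\lambda[\phi(\lambda)]^{1/p}\le 2^{i+1}[\phi(2^{i})]^{1/p}=2\cdot 2^{i}[\phi(2^{i})]^{1/p}\le 2\sup_{j\in\zz}2^{j}[\phi(2^{j})]^{1/p}$; taking the supremum over $\lambda>0$ then finishes the proof.

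I do not expect any genuine obstacle: the argument is routine. The one bookkeeping point worth writing out carefully is the uniform (in $i$) comparison $\lambda^{q-1}\sim 2^{i(q-1)}$ on dyadic rings with a constant depending only on $q$, split according to whether $q\ge 1$ or $0<q<1$; everything else is monotonicity of $\phi$ together with the identity $\int_{2^{i}}^{2^{i+1}}\lambda^{q-1}\,d\lambda\sim 2^{iq}$.
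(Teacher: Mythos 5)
Your proof is correct and is exactly the routine dyadic discretization the paper has in mind when it states Lemma \ref{lem0831a} with the remark that the details are omitted: everything reduces to the monotonicity of $\lambda\mapsto\mathcal{H}_{\infty}^{\delta}(\{x\in\rn:\ |f(x)|>\lambda\})$ together with $\int_{2^{i}}^{2^{i+1}}\lambda^{q-1}\,d\lambda\sim 2^{iq}$, and your handling of both cases $q\in(0,\infty)$ and $q=\infty$ (including the reindexing $j=i+1$ and the factor $2$ in the weak case) is accurate. No gaps.
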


\begin{rem}\label{rem0530}
Let $\delta\in(0,n]$ and $p\in(0,\fz)$.
By Lemma \ref{lem0831a} and the well-known inequality that,
for any $\theta\in(0,1]$ and $\{a_i\}_{i=1}^\fz\subset \cc$,
\begin{align}\label{eq-0916a4}
\left(\sum_{i=1}^{\infty}|a_i|\right)^{\theta}\leq\sum_{i=1}^{\infty}{|a_i|}^{\theta},
\end{align}
we conclude that, if $0<s\le r<\fz$, then $L^{p,s}(\rn,\mathcal{H}_\fz^\delta)\subset L^{p,r}(\rn,\mathcal{H}_\fz^\delta)$
and, for any
$f\in L^{p,s}(\rn,\mathcal{H}_\fz^\delta)$,
$\|f\|_{L^{p,r}(\rn,\mathcal{H}_\fz^\delta)}\ls \|f\|_{L^{p,s}(\rn,\mathcal{H}_\fz^\delta)}$
with the implicit positive constant independent of $f$.
\end{rem}

From the definition of Choquet--Lorentz integrals, we easily infer the following conclusion; we
omit the details.

\begin{lem}\label{lem0909Lc1}
Let $\delta\in(0,n]$, $p\in(0,\infty)$, and $q\in(0,\infty]$. Then,
for any $f\in L^{p,q}(\rn,\mathcal{H}_{\infty }^{\delta})$
and $\nu\in(0,\infty)$, it holds that
\begin{align*}
\||f|^{\nu}\|_{L^{p,q}(\rn,\mathcal{H}_{\infty }^{\delta })}
=\|f\|^{\nu}_{L^{\nu p,\nu q}(\rn,\mathcal{H}_{\infty }^{\delta })}.
\end{align*}
\end{lem}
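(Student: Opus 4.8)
The plan is to reduce both quantities to the layer-cake integral defining the Choquet--Lorentz quasi-norm and to exploit the elementary scaling identity for superlevel sets. First I would record, for $\nu\in(0,\fz)$ and any $\lambda\in(0,\fz)$, the set identity
\[
\lf\{x\in\rn:\ |f(x)|^\nu>\lambda\r\}=\lf\{x\in\rn:\ |f(x)|>\lambda^{1/\nu}\r\},
\]
so that the Hausdorff-content distribution function of $|f|^\nu$ is the composition of that of $|f|$ with the map $t\mapsto t^\nu$; recall that this distribution function is decreasing in its argument, hence Lebesgue measurable, so all the integrals below make sense for $f\in L^{p,q}(\rn,\ch_\fz^\delta)$.

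For $q\in(0,\fz)$, I would start from Definition \ref{def-a2}, insert the above set identity, and perform the change of variables $\lambda=t^\nu$ (so that $\frac{d\lambda}{\lambda}=\nu\,\frac{dt}{t}$ and $\lambda^q=t^{\nu q}$), obtaining
\[
\||f|^\nu\|_{L^{p,q}(\rn,\ch_\fz^\delta)}^q
=\nu p\int_0^\fz t^{\nu q}\lf[\ch_\fz^\delta\lf(\lf\{x\in\rn:\ |f(x)|>t\r\}\r)\r]^{q/p}\,\frac{dt}{t}.
\]
Since $\frac{q}{p}=\frac{\nu q}{\nu p}$, the right-hand side is precisely $\|f\|_{L^{\nu p,\nu q}(\rn,\ch_\fz^\delta)}^{\nu q}$, and taking the $(\nu q)$-th root yields the claimed identity. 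The case $q=\fz$ is handled the same way: the substitution $\lambda=t^\nu$ turns $\sup_{\lambda>0}\lambda[\ch_\fz^\delta(\{x\in\rn:\ |f(x)|^\nu>\lambda\})]^{1/p}$ into $\sup_{t>0}t^\nu[\ch_\fz^\delta(\{x\in\rn:\ |f(x)|>t\})]^{1/p}$, which equals $\{\sup_{t>0}t[\ch_\fz^\delta(\{x\in\rn:\ |f(x)|>t\})]^{1/(\nu p)}\}^\nu=\|f\|_{L^{\nu p,\fz}(\rn,\ch_\fz^\delta)}^\nu$.

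There is no real obstacle here: the statement is pure homogeneity of the Choquet--Lorentz functional under the pointwise power map $f\mapsto|f|^\nu$. The only two points worth a line of justification are the Lebesgue measurability of the integrand (guaranteed by its monotonicity in $\lambda$, as just noted) and the validity of the substitution $\lambda=t^\nu$, which is a $C^1$ increasing bijection of $(0,\fz)$ onto itself so that the change-of-variables formula applies to the non-negative integrand; both are standard. In particular, no hypothesis on $f$ beyond $f\in L^{p,q}(\rn,\ch_\fz^\delta)$ is needed, since every quantity appearing is non-negative.
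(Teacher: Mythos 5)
Your argument is correct and is exactly the computation the paper has in mind (the paper omits the proof, noting it follows easily from the definition): the substitution $\lambda=t^{\nu}$ turns the layer-cake integral for $\||f|^{\nu}\|_{L^{p,q}(\rn,\ch_\fz^{\delta})}$ into the one defining $\|f\|_{L^{\nu p,\nu q}(\rn,\ch_\fz^{\delta})}^{\nu}$, and you correctly track the normalization constant, which becomes $\nu p$ and matches the leading constant in the definition of the $L^{\nu p,\nu q}$ quasi-norm, with the $q=\infty$ case handled analogously. The only nitpick is the phrase ``taking the $(\nu q)$-th root'': taking the $q$-th root of both sides is the cleaner way to land on the stated identity, but this is purely cosmetic.
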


The following conclusion is a special case of \cite[Theorem 7(a)]{su09}.

\begin{lem}\label{lem0531}
Let $\delta\in(0,n]$ and $\mu\in[0,n)$.
If $p\in(\frac \delta n,\frac \delta\mu)$, then, for any
$f\in L^p(\rn,\ch_\fz^\delta)$,
$$\|\cm_\mu(f)\|_{L^p(\rn,\ch_\fz^{\delta-\mu p})}
\ls\|f\|_{L^p(\rn,\ch_\fz^\delta)}$$
and, if $p=\frac\delta n$, then
$$\sup_{\lz\in(0,\fz)}\lz \ch_\fz^{\delta-\mu p}
\lf(\lf\{x\in\rn:\ \cm_\mu(f)>\lz\r\}\r)^{1/p}
\ls \|f\|_{L^p(\rn,\ch_\fz^\delta)},$$
where the implicit positive constants are independent of $f$.
\end{lem}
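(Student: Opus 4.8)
The plan is to establish both assertions at once from a single covering argument on the superlevel sets of $\cm_\mu f$, the one genuinely delicate point being that the Choquet functional $g\mapsto\int_\rn g\,d\ch_\fz^{\sigma}$ is only subadditive, not a measure, so that a naive disjointification of the covering loses too much; this is circumvented by passing to a Frostman measure concentrated on the level set in question, or equivalently by working with the dyadic Hausdorff content $\widetilde{\ch}_0^{\delta}$ of \cite{Ha96}, which is a genuine strong capacity.

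First I would record an elementary passage between Lebesgue and Hausdorff-content integrals: for any ball $B=B(x,r)$, any $\sigma\in(0,n]$ and any $E\st B$, trimming an optimal covering of $E$ to balls of radius at most $2r$ and comparing $\sum_i r_i^n$ with $\sum_i r_i^{\sigma}$ gives $|E|\ls r^{\,n-\sigma}\ch_\fz^{\sigma}(E)$, and the layer-cake formula upgrades this to $\int_B g\,dy\ls r^{\,n-\sigma}\int_B g\,d\ch_\fz^{\sigma}$ for every nonnegative $g$. Taking $\sigma=\delta$ yields the pointwise domination $\cm_\mu f(x)\ls\sup_{r>0}r^{\,\mu-\delta}\int_{B(x,r)}|f|\,d\ch_\fz^{\delta}$; the form used below is that $r^{\mu}\fint_{B(x,r)}|f|>\lambda$ forces $\lambda\,r^{\,\delta-\mu}\ls\int_{B(x,r)}|f|\,d\ch_\fz^{\delta}$. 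Now fix $\lambda>0$, put $\sigma:=\delta-\mu p$ (so $\sigma=\delta(n-\mu)/n$ at the endpoint $p=\delta/n$), and let $E_\lambda:=\{x\in\rn:\ \cm_\mu f(x)>\lambda\}$, which is open since $\cm_\mu f$ is lower semicontinuous. Passing to compact subsets of $E_\lambda$, I would invoke Frostman's lemma (see, e.g., \cite{f76}) to produce a nonzero Radon measure $\nu$ concentrated on $E_\lambda$ with $\nu(B(z,\rho))\le\rho^{\sigma}$ for all $z,\rho$ and $\nu(\rn)\gs\ch_\fz^{\sigma}(E_\lambda)$. For each $x\in\supp\nu$ choose $B_x=B(x,r_x)$ with $r_x^{\mu}\fint_{B_x}|f|>\lambda$ and apply the basic $5r$-covering lemma (or the Besicovitch covering lemma) to $\{B_x\}_{x\in\supp\nu}$ to extract a countable subfamily whose fixed dilates still cover $\supp\nu$. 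Since $\nu$ is a genuine countably additive measure, summing $\nu$ over this subfamily, inserting $\nu(B_x)\le r_x^{\sigma}$ and the relation $\lambda r_x^{\,\delta-\mu}\ls\int_{B_x}|f|\,d\ch_\fz^{\delta}$ from the first step, and tracking the exponents $\sigma$, $\delta-\mu$, $\delta$ (with $p=\delta/n$ at the endpoint), I expect to reach $\ch_\fz^{\sigma}(E_\lambda)\ls\nu(\rn)\ls\lambda^{-p}\int_{\rn}|f|^{\delta/n}\,d\ch_\fz^{\delta}$, which is precisely the weak-type estimate in~(ii).

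For~(i), with $p\in(\delta/n,\delta/\mu)$, I would run a Marcinkiewicz-type argument — legitimate because $\cm_\mu$ is sublinear and the Choquet functional obeys the needed layer-cake and Fubini manipulations — splitting $f=f\mathbf 1_{\{|f|>\lambda/(2A)\}}+f\mathbf 1_{\{|f|\le\lambda/(2A)\}}=:g_\lambda+b_\lambda$ with $A$ large, so that after the standard localization of radii one has $\cm_\mu b_\lambda\le\lambda/2$ (the large-radius contribution being controlled directly from $f\in L^1_{\loc}(\rn)$ via the conversion above) and hence $E_\lambda(\cm_\mu f)$ is essentially contained in $E_{\lambda/2}(\cm_\mu g_\lambda)$; inserting the endpoint estimate for $g_\lambda$ into $p\int_0^\fz\lambda^{p-1}\ch_\fz^{\delta-\mu p}(E_\lambda(\cm_\mu f))\,d\lambda$ and integrating in $\lambda$ then gives $\|\cm_\mu f\|_{L^p(\rn,\ch_\fz^{\delta-\mu p})}\ls\|f\|_{L^p(\rn,\ch_\fz^{\delta})}$; alternatively one simply reruns the covering scheme of the previous paragraph at the given $p$, where the $\lambda$-integral now converges. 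I expect the main obstacle to be the summation step: bounding the sum over the covering balls of the Choquet mass of $|f|$ localized to them. Since this functional is not additive, that is exactly where the Frostman measure $\nu$ (or the dyadic content $\widetilde{\ch}_0^{\delta}$) is indispensable; it is the technical heart of the argument, carried out by Adams \cite{su09} and simplified by Orobitg and Verdera \cite{hu03}, and the sharpness of the exponents shows no cheaper route is available.
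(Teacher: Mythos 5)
You should know at the outset that the paper does not prove this lemma at all: it is quoted verbatim as a special case of \cite[Theorem 7(a)]{su09} (Adams), whose proof rests on the Orobitg--Verdera circle of ideas. So the comparison is between your sketch and that known argument, and your sketch has two concrete gaps. First, the local estimate you set up is the linear conversion $\lambda r^{\delta-\mu}\ls\int_{B(x,r)}|f|\,d\ch_\fz^{\delta}$, and with it the exponent bookkeeping cannot reach the claimed endpoint bound $\ch_\fz^{\delta-\mu p}(E_\lambda)\ls\lambda^{-p}\int_{\rn}|f|^{p}\,d\ch_\fz^{\delta}$ at $p=\frac{\delta}{n}$: per ball it only gives $r^{\sigma}\ls\bigl(\lambda^{-1}\int_{B}|f|\,d\ch_\fz^{\delta}\bigr)^{\sigma/(\delta-\mu)}$ with $\sigma:=\delta-\mu p$, and since $\frac{\delta}{n}\le 1$ there is no reverse-H\"older mechanism converting $\int_{B}|f|\,d\ch_\fz^{\delta}$ into $\int_{B}|f|^{\delta/n}\,d\ch_\fz^{\delta}$; at best you would land on an $L^{1}(\rn,\ch_\fz^{\delta})$-type weak inequality, which is not the statement (and is not the correct endpoint). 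What the argument actually requires is the genuinely nonlinear local inequality of Orobitg--Verdera, i.e.\ the ball version of \eqref{eq0530d},
\begin{equation*}
\frac{1}{|B(x,r)|}\int_{B(x,r)}|f(y)|\,dy\ls\lf(r^{-\delta}\int_{B(x,r)}|f(y)|^{\frac{\delta}{n}}\,d\ch_\fz^{\delta}\r)^{\frac{n}{\delta}},
\end{equation*}
which yields $\lambda^{p}r^{\delta-\mu p}\ls\int_{B(x,r)}|f|^{p}\,d\ch_\fz^{\delta}$ with $p=\frac{\delta}{n}$ and the right homogeneity in $f$.

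Second, the Frostman measure (or the dyadic content) does not do the work you assign to it. The only place $\nu$ enters is to bound $\ch_\fz^{\sigma}(E_\lambda)$ by $\sum_i(5r_i)^{\sigma}$, which already follows from countable subadditivity of $\ch_\fz^{\sigma}$ applied to the cover $\{5B_i\}_i$; the real non-additivity obstruction sits on the other side, namely the needed bound $\sum_i\int_{B_i}|f|^{\delta/n}\,d\ch_\fz^{\delta}\ls\int_{\rn}|f|^{\delta/n}\,d\ch_\fz^{\delta}$ for the selected disjoint balls, and neither $\nu$ nor $\widetilde{\ch}_0^{\delta}$ removes it: such quasi-additivity is false for general disjoint families (take $f=\mathbf{1}_{[0,1]^n}$ and $\sim N^{n}$ disjoint balls of radius $\sim N^{-1}$; the left side is $\sim N^{n-\delta}$ while the right side is $\sim1$), so one must exploit the maximal/dyadic structure of the selected cubes via the packing argument of \cite{hu03}, or Adams's duality with Morrey spaces in \cite{su09}. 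You explicitly defer exactly this step to those references, at which point your proposal reduces to the citation the paper itself gives. Finally, the Marcinkiewicz passage to the strong bound in (i) is not routine here: the target content $\ch_\fz^{\delta-\mu p}$ changes with $p$, so you are not interpolating two estimates into one fixed space, and $\cm_\mu$ with $\mu>0$ is not bounded on $L^\fz$, so the claim $\cm_\mu b_\lambda\le\lambda/2$ after ``localization of radii'' needs an actual argument (Adams instead handles each $p$ directly, e.g.\ through a Hedberg-type pointwise reduction to the case $\mu=0$).
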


To apply the real interpolation theorem, we need to recall more
properties on the Hausdorff content.
It is known that $\ch_\fz^\delta$ is an outer
capacity in the sense of Meyers \cite[p.\,257]{cruz03} and an outer measure.
Precisely, the Hausdorff content has the following properties (see, for example, \cite{fz01}).

\begin{lem}\label{rema1}
Let $\delta\in(0,n]$. Then the following statements hold.
\begin{enumerate}
\item[{\rm (\romannumeral1)}]$\mathcal{H}_{\infty }^{\delta }(\emptyset)=0$;
\item[{\rm(\romannumeral2)}] If $A\subset B\subset \rn$,
then $\mathcal{H}_{\infty }^{\delta }(A)\leq\mathcal{H}_{\infty }^{\delta }(B)$;
\item[{\rm(\romannumeral3)}]  If $K\subset \rn$, then
$$\mathcal{H}_{\infty }^{\delta }\lf(K\r)
=\inf \{\mathcal{H}_{\infty }^{\delta }\lf(U\r):\
U\supset K~\text{and}~\emph U~\text{is~open}\};$$
\item[{\rm (\romannumeral4)}]If $\{K_i\}_{i=1}^{\infty}$
is a decreasing sequence of compact sets in $\rn$, then
$$\mathcal{H}_{\infty }^{\delta }
\lf(\bigcap_{i=1}^{\infty}K_{i} \r)=\lim_{i\rightarrow\infty}
\mathcal{H}_{\infty }^{\delta }\lf(K_{i} \r);$$
\item[{\rm (\romannumeral5)}]If $\{E_i\}_{i=1}^{\infty}$
is any sequence of sets in $\rn$, then
$$\mathcal{H}_{\infty }^{\delta }\lf(\bigcup_{i=1}^{\infty}E_{i} \r)
\leq \sum_{i=1}^{\infty}
\mathcal{H}_{\infty }^{\delta }\lf(E_{i}\r).$$
\end{enumerate}
\end{lem}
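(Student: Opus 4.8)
The plan is to dispatch (i), (ii), and (v) directly from the definition of the covering infimum, then deduce the outer regularity (iii) from subadditivity together with the fact that the balls $B(x_i,r_i)$ appearing in \eqref{0e1} are open, and finally obtain the continuity from above (iv) by combining (iii) with a compactness argument; I expect (iv) to be the only nontrivial point.

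First I would prove (i) by noting that $\emptyset\subset B(\mathbf{0},r)$ for every $r\in(0,\fz)$, so $\mathcal{H}_{\infty }^{\delta }(\emptyset)\le r^{\delta}$ for all such $r$, and letting $r\to0^{+}$ gives $\mathcal{H}_{\infty }^{\delta }(\emptyset)=0$. For (ii), if $A\subset B$ then every finite or countable ball covering of $B$ is also one of $A$, so the infimum in \eqref{0e1} for $A$ is over a family containing that for $B$, whence $\mathcal{H}_{\infty }^{\delta }(A)\le\mathcal{H}_{\infty }^{\delta }(B)$. For (v), given $\varepsilon\in(0,\fz)$, for each $i$ I would choose a ball covering $\{B(x_{i,j},r_{i,j})\}_{j}$ of $E_{i}$ with $\sum_{j}r_{i,j}^{\delta}\le\mathcal{H}_{\infty }^{\delta }(E_{i})+\varepsilon2^{-i}$; then $\{B(x_{i,j},r_{i,j})\}_{i,j}$ is a countable ball covering of $\bigcup_{i}E_{i}$, so $\mathcal{H}_{\infty }^{\delta }(\bigcup_{i}E_{i})\le\sum_{i,j}r_{i,j}^{\delta}\le\sum_{i}\mathcal{H}_{\infty }^{\delta }(E_{i})+\varepsilon$, and I would let $\varepsilon\to0^{+}$.

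For (iii), the bound $\mathcal{H}_{\infty }^{\delta }(K)\le\inf\{\mathcal{H}_{\infty }^{\delta }(U):\ U\supset K,\ U\ \text{open}\}$ is immediate from (ii). For the reverse inequality, given $\varepsilon\in(0,\fz)$ I would take a ball covering $\{B(x_{i},r_{i})\}_{i}$ of $K$ with $\sum_{i}r_{i}^{\delta}\le\mathcal{H}_{\infty }^{\delta }(K)+\varepsilon$; since each $B(x_{i},r_{i})$ is open, the set $U:=\bigcup_{i}B(x_{i},r_{i})$ is open, contains $K$, and is covered by $\{B(x_{i},r_{i})\}_{i}$ itself, hence $\mathcal{H}_{\infty }^{\delta }(U)\le\sum_{i}r_{i}^{\delta}\le\mathcal{H}_{\infty }^{\delta }(K)+\varepsilon$. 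Letting $\varepsilon\to0^{+}$ gives (iii).

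Finally, for (iv), set $K:=\bigcap_{i=1}^{\infty}K_{i}$. By (ii), $\mathcal{H}_{\infty }^{\delta }(K)\le\mathcal{H}_{\infty }^{\delta }(K_{i})$ for every $i$, and $\{\mathcal{H}_{\infty }^{\delta }(K_{i})\}_{i}$ is non-increasing and bounded below, so $\lim_{i\to\infty}\mathcal{H}_{\infty }^{\delta }(K_{i})$ exists and is at least $\mathcal{H}_{\infty }^{\delta }(K)$. For the reverse bound I would fix $\varepsilon\in(0,\fz)$ and, by (iii), choose an open $U$ with $K\subset U$ and $\mathcal{H}_{\infty }^{\delta }(U)\le\mathcal{H}_{\infty }^{\delta }(K)+\varepsilon$. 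The key step is that $K_{i_{0}}\subset U$ for some $i_{0}$: the sets $K_{i}\setminus U$ form a decreasing sequence of compact sets (closed subsets of the compact $K_{i}$) with $\bigcap_{i}(K_{i}\setminus U)=K\setminus U=\emptyset$, so if all were nonempty the finite intersection property of compact sets would force a nonempty intersection, a contradiction; hence $K_{i_{0}}\setminus U=\emptyset$ for some $i_{0}$. Then (ii) gives $\mathcal{H}_{\infty }^{\delta }(K_{i_{0}})\le\mathcal{H}_{\infty }^{\delta }(U)\le\mathcal{H}_{\infty }^{\delta }(K)+\varepsilon$, and monotonicity of the sequence yields $\lim_{i\to\infty}\mathcal{H}_{\infty }^{\delta }(K_{i})\le\mathcal{H}_{\infty }^{\delta }(K)+\varepsilon$; letting $\varepsilon\to0^{+}$ finishes (iv). The main obstacle is precisely this compactness reduction, which crucially uses both the outer regularity (iii) and the fact that the covering balls in \eqref{0e1} are open.
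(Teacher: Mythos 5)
Your proof is correct and complete: (i), (ii), (v) follow directly from the covering definition as you say, your derivation of outer regularity (iii) from the openness of the covering balls is exactly right, and the compactness reduction in (iv) (the sets $K_i\setminus U$ are compact, decreasing, with empty intersection, so some $K_{i_0}\subset U$) is the standard and correct way to get continuity from above on compact sets. The paper itself offers no proof of this lemma, citing the literature (Adams' note on Choquet integrals with respect to Hausdorff capacity) instead, so there is nothing to compare against; your self-contained argument is the classical one and fills in that citation correctly, with the only unstated (and harmless) detail being that the cases of infinite content in (iii) and (v) are trivial.
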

	
However, the Hausdorff content $\mathcal{H}_{\infty }^{\delta }$ is not
a capacity in the sense of Choquet \cite{dh11} because the deficiency of the following property: for any increasing sequence $\{K_i\}_{i=1}^{\infty}$ of sets,
\begin{align*}
\mathcal{H}_{\infty }^{\delta }\lf(\bigcup_{i=1}^{\infty}K_{i} \r)
=\lim_{i\rightarrow\infty}
\mathcal{H}_{\infty }^{\delta }\lf(K_{i} \r).
\end{align*}
To overcome this obstacle, the \emph{dyadic Hausdorff content
$\widetilde{\mathcal{H}}_{\infty }^{\delta }$} was introduced
(see, for instance, \cite{fz01}), that is,
\begin{align*}
\widetilde{\mathcal{H}}_{\infty }^{\delta }\lf(K \r)
:=\inf\lf\{\sum_{i=1}^{\infty }[\emph{l}(Q_{i})]^{\delta }:\
K\subset\bigcup_{i=1}^{\infty}Q_{i} \r\},
\end{align*}
where the infimum is taken over all dyadic cube coverings of $K$
and where $l(Q)$ denotes the edge length of the cube $Q$.
This equivalent Hausdorff content was claimed to be a capacity in the sense of Choquet in \cite{fz01},
but it is true only when $\delta\in (n-1,n]$ (see \cite[Propositions 2.1 and 2.2]{Ha96}).
Indeed, Yang and Yuan in \cite[Proposition 2.2]{Ha96} proved that when
$\delta\in (0,n-1]$ there exist compact sets
$\{K_j\}_{j\in\nn}$ which is decreasing but
$$\widetilde{\mathcal{H}}_{\infty }^{\delta}\lf(\bigcap_{i=1}^\fz K_j\r)
\neq \lim_{j\to\fz} \widetilde{\mathcal{H}}_{\infty }^{\delta}(K_j).$$
To overcome this shortage, a \emph{new dyadic Hausdorff content} $\widetilde{\mathcal{H}}_{0}^{\delta }$
was introduced in \cite[Definition 2.1]{Ha96},
that is, for any subset $K$ of $\rn$,
\begin{align*}
\widetilde{\mathcal{H}}_{0}^{\delta }\lf(K \r)
:=\inf\lf\{\sum_i \lf[\emph{l}(Q_{i})\r]^{\delta }:\
K\subset\lf(\bigcup_i Q_{i}\r)^{\circ} \r\},
\end{align*}
where the infimum is taken over all dyadic cubes $\{Q_{i}\}_{i}$ and, for any $E\subset \rn$,
$E^{\circ}$ denote the \emph{interior} of the set $E$.
This dyadic Hausdorff content $\widetilde{\mathcal{H}}_{0}^{\delta }$ has
the following good properties.
	
\begin{rem}\label{rem0920b}
Let $\delta\in(0,n]$ with $n\ge 2$.

\begin{enumerate}
\item[(i)] By \cite[Proposition 2.3]{Ha96}, $\widetilde{\mathcal{H}}_{0}^{\delta}$
is equivalent to $\mathcal{H}_{\infty }^{\delta }$ with the positive
equivalence constants
depending only on $n$.

\item[(ii)]
It follows from \cite[Proposition 2.4 and Theorem 2.1]{Ha96} that
$\widetilde{\mathcal{H}}_{0}^{\delta}$ is strongly subadditive,
that is, for any sets $K_1,\,K_2 \subset\rn$,
\begin{align*}
\widetilde{\mathcal{H}}_{0}^{\delta}(K_1\cup K_2)
+\widetilde{\mathcal{H}}_{0}^{\delta}(K_1\cap K_2)
\leq\widetilde{\mathcal{H}}_{0}^{\delta}(K_1)+\widetilde{\mathcal{H}}_{0}^{\delta}(K_2)
\end{align*}
and, for any $\delta\in (0, n]$, it holds that
\begin{enumerate}
\item[{\rm(a)}]if $\{K_j\}_{j=1}^{\infty}$ are compact
and decrease to $K$, then $\lim_{j\rightarrow\infty}
\widetilde{\mathcal{H}}_{0}^{\delta}(K_j)=
\widetilde{\mathcal{H}}_{0}^{\delta}(K)$;
\item[{\rm(b)}]if $\{E_j\}_{j=1}^{\infty}$ increase to $E$,
then $\lim_{j\rightarrow\infty}\widetilde{\mathcal{H}}_{0}^{\delta}(E_j)=
\widetilde{\mathcal{H}}_{0}^{\delta}(E)$.
\end{enumerate}
Therefore, $\widetilde{\mathcal{H}}_{0}^{\delta}$ is a capacity in the sense of
 Choquet \cite{dh11} for all $\delta\in (0, n]$.
\end{enumerate}
\end{rem}

We next recall the concept of the real interpolation and, for general facts concerning the interpolation theory,
we refer to books \cite{bl76} and \cite{bk91}.
In \cite[Corollary 1]{cms11}, Cerd\`{a} et al. established a real interpolation result of
Choquet integrals corresponding to a capacity $C$, which satisfies:
\begin{enumerate}
\item[(i)] \emph{Quasi-subadditivity}: there exists $L\in[1,\fz)$ such that, for any subsets $A$ and $B$ of $\rn$,
$$C(A\cup B)\le L[C(A)+C(B)];$$
\item[(ii)] \emph{Fatou property}: for a sequence $\{E_j\}_{j=1}^\fz$ of subsets in $\rn$,
if $\{E_j\}_{j=1}^\fz$ increase to $E$, then
$$\lim_{j\to\fz} C(E_j)=C(E).$$
\end{enumerate}

We point out that the dyadic Hausdorff content $\widetilde{\ch}_0^\delta$ satisfies the
quasi-subadditivity and the Fatou property for any $\delta\in(0,n]$ by Remark \ref{rem0920b}(ii).
But, the Hausdorff content $\mathcal{H}_{\infty }^{\delta }$ does not have the Fatou property.
Therefore, we can only apply \cite[Corollary 1]{cms11} to the Choquet integral
with respect to the dyadic Hausdorff content $\widetilde{\ch}_0^\delta$. Fortunately,
$\widetilde{\mathcal{H}}_{0}^{\delta}$ is equivalent to $\mathcal{H}_{\infty }^{\delta }$
due to \cite[Proposition 2.3]{Ha96}.

Let $p_0,p_1\in(0,\fz)$. Then the space
$L^{p_0}(\rn,\widetilde{\ch}_0^\delta)+L^{p_1}(\rn,\widetilde{\ch}_0^\delta)$
is defined to be the set of all measurable functions $f$
on $\rn$ such that $f=f_0+f_1$, where $f_0\in
L^{p_0}(\rn,\widetilde{\ch}_0^\delta)$ and
$f_1\in
L^{p_1}(\rn,\widetilde{\ch}_0^\delta)$.
For simplicity of presentation, let
$A_0:=L^{p_0}(\rn,\widetilde{\ch}_0^\delta)$  and
$A_1:=L^{p_1}(\rn,\widetilde{\ch}_0^\delta).$
Then the \emph{real interpolation space} $(A_0,A_1)_{\theta,q}$, with $\theta\in(0,1)$
and $q\in(0,\fz]$, is defined to be the set
of all functions $f\in A_0+A_1$ satisfying
$$\|f\|_{\theta,q}:=\lf\{\int_0^\fz \lf[t^{-\theta}K(t,f;A_0,A_1)\r]^q\,\frac{dt}t\r\}^{\frac1q}<\fz,$$
where $K(t,f;A_0,A_1)$ is called the \emph{$K$-functional},
defined by setting
$$K(t,f;A_0,A_1):=\inf\lf\{\|f_0\|_{A_0}+t\|f_1\|_{A_1}:\ f=f_0+f_1,~~f_0\in A_0,~~f_1\in A_1\r\}.$$

Now, we state the real interpolation result of the Choquet integral corresponding to
the dyadic Hausdorff content $\widetilde{\ch}_0^\delta$ as follows,
which is a special case of \cite[Corollary 1]{cms11}.

\begin{lem}\label{prop0527}
Let $\delta\in(0,n]$ and $p\in(0,\fz)$. Then, for any $0<p_0<p_1<\fz$, $q\in(p_0,\fz)$, and $\eta\in(0,1)$ with
$\frac1p=\frac{1-\eta}{p_0}+\frac{\eta}{p_1}$,
$(L^{p_0}(\rn, \widetilde{\ch}_0^\delta),L^{p_1}(\rn,\widetilde{\ch}_0^\delta))_{\eta,q}
=L^{p,q}(\rn,\widetilde{\ch}_0^\delta).$
\end{lem}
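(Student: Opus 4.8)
The plan is to obtain Lemma~\ref{prop0527} as a direct consequence of the abstract real interpolation theorem of Cerd\`a et al. \cite[Corollary 1]{cms11}, which asserts that, for \emph{any} set capacity $C$ that is quasi-subadditive and has the Fatou property, and for admissible exponents $0<p_0<p_1<\fz$, $q\in(p_0,\fz)$, and $\eta\in(0,1)$ with $\frac1p=\frac{1-\eta}{p_0}+\frac{\eta}{p_1}$, one has $(L^{p_0}(\rn,C),L^{p_1}(\rn,C))_{\eta,q}=L^{p,q}(\rn,C)$ with equivalent quasi-norms. Accordingly, the first step is to verify that $C=\widetilde{\ch}_0^\delta$ satisfies the two structural hypotheses. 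Quasi-subadditivity holds, in fact with constant $L=1$: Remark~\ref{rem0920b}(ii) gives strong subadditivity of $\widetilde{\ch}_0^\delta$, which, since $\widetilde{\ch}_0^\delta$ is nonnegative, forces $\widetilde{\ch}_0^\delta(A\cup B)\le\widetilde{\ch}_0^\delta(A)+\widetilde{\ch}_0^\delta(B)$ for all $A,B\subset\rn$. The Fatou property is precisely Remark~\ref{rem0920b}(ii)(b). Hence \cite[Corollary 1]{cms11} applies to $\widetilde{\ch}_0^\delta$ for every $\delta\in(0,n]$.

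The second step is a bookkeeping reconciliation: I would check that the Choquet--Lorentz quasi-norm of Definition~\ref{def-a2} (specialized to the capacity $\widetilde{\ch}_0^\delta$) coincides with, or is equivalent to, the quasi-norm for which \cite[Corollary 1]{cms11} is stated. This is immediate from the discretization in Lemma~\ref{lem0831a}, which shows that both are comparable to $[\sum_{i\in\zz}2^{iq}\{\widetilde{\ch}_0^\delta(\{x\in\rn:\ |f(x)|>2^i\})\}^{q/p}]^{1/q}$ (the normalizing factor $p$ in front of the integral being harmless). Consequently the interpolation identity transfers, and tracking the parameter restrictions $q\in(p_0,\fz)$ and $\frac1p=\frac{1-\eta}{p_0}+\frac{\eta}{p_1}$ directly yields the statement of the lemma, with equivalence constants depending only on $n$, $p_0$, $p_1$, $q$, and $\eta$. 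I would emphasize here why the argument is run with the dyadic content $\widetilde{\ch}_0^\delta$ rather than with $\ch_\fz^\delta$ itself: the latter fails the Fatou property, so \cite[Corollary 1]{cms11} cannot be invoked for it directly; it is only the equivalence $\widetilde{\ch}_0^\delta\sim\ch_\fz^\delta$ from \cite[Proposition 2.3]{Ha96} that later allows the conclusion to be ported back to $\ch_\fz^\delta$ when the lemma is used.

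For completeness I would also sketch the mechanism underlying \cite[Corollary 1]{cms11}, should a self-contained proof be wanted. One introduces the capacitary distribution function $\lambda\mapsto\widetilde{\ch}_0^\delta(\{x\in\rn:\ |f(x)|>\lambda\})$, which is nonincreasing and, by the Fatou property together with monotonicity (Lemma~\ref{rema1}(ii)), right-continuous; its generalized inverse defines a capacitary decreasing rearrangement $f^{\ast}$ on $(0,\fz)$, and $\|f\|_{L^{p,q}(\rn,\widetilde{\ch}_0^\delta)}$ is comparable to the classical Lorentz quasi-norm of $f^{\ast}$ with respect to Lebesgue measure on the half-line. The heart of the matter is a Holmstedt-type identity for $K(t,f;L^{p_0}(\rn,\widetilde{\ch}_0^\delta),L^{p_1}(\rn,\widetilde{\ch}_0^\delta))$ in terms of $f^{\ast}$, obtained by splitting $f$ at the level $f^{\ast}(t^{\gamma})$ for a suitable power $\gamma$ and estimating the capacity of the two superlevel pieces using subadditivity of $\widetilde{\ch}_0^\delta$; this reduces everything to the standard Lebesgue-measure computation on $(0,\fz)$ recorded in \cite{bl76,bk91}. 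The main obstacle on this self-contained route is precisely that the underlying ``measure'' is only a subadditive capacity rather than a genuine measure, so exact additivity over disjoint sets is unavailable and the two-sided $K$-functional estimates must be arranged to use nothing beyond subadditivity and the Fatou property; on the route I actually propose, however, there is no real obstacle, the only care being to confirm the hypotheses of \cite[Corollary 1]{cms11} (supplied by Remark~\ref{rem0920b}) and to match quasi-norm conventions (supplied by Lemma~\ref{lem0831a}).
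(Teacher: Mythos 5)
Your proposal is correct and follows essentially the same route as the paper: the paper likewise obtains Lemma~\ref{prop0527} as a special case of \cite[Corollary 1]{cms11}, having verified via Remark~\ref{rem0920b}(ii) that $\widetilde{\ch}_0^\delta$ is (quasi-)subadditive and has the Fatou property, which is exactly your first step. Your additional reconciliation of quasi-norm conventions and the sketched Holmstedt-type mechanism are harmless elaborations of the same argument rather than a different approach.
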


We are now ready to prove Theorem \ref{thm0530}.

\begin{proof}[Proof of Theorem \ref{thm0530}]
Let $\delta\in(0,n]$, $p\in(\frac\delta n,\frac\delta \mu)$,
$r\in(\frac\delta n,\fz)$, and $s\in(0,r]$. We choose $\eta\in(0,1)$, $p_0,p_1\in (\frac\delta n,\frac\delta \mu)$ such that $p_0<r$,
$p_0<p_1$, and $\frac1p=\frac{\eta}{p_0}+\frac{1-\eta}{p_1}$. Then, by Lemma \ref{prop0527}, we conclude that,
for any $\delta\in(0,n]$,
\begin{equation}\label{eq0531c}
L^{p,r}(\rn,\widetilde{\ch}_0^\delta)=\lf(L^{p_0}(\rn, \widetilde{\ch}_0^{\delta}),
L^{p_1}(\rn,\widetilde{\ch}_0^{\delta})\r)_{\eta,r}
\end{equation}
and, moreover, for any $\mu\in [0,n)$,
\begin{equation}\label{eq0531}
L^{p,r}(\rn,\widetilde{\ch}_0^{\delta-\mu p})
=\lf(L^{p_0}(\rn, \widetilde{\ch}_0^{\delta-\mu p}),L^{p_1}(\rn,\widetilde{\ch}_0^{\delta-\mu p})\r)_{\eta,r}.
\end{equation}

Let $f\in L^{p,s}(\rn,\ch_\fz^\delta)$. Then $f\in L^{p,r}(\rn,\widetilde{\ch}_0^\delta)$
due to Remarks \ref{rem0920b}(i) and \ref{rem0530}.
Thus, by \eqref{eq0531c}, we find that
 there exist two functions $f_0\in L^{p_0}(\rn, \widetilde{\ch}_0^\delta)$ and
$f_1\in L^{p_1}(\rn,\widetilde{\ch}_0^\delta)$ such that $f=f_0+f_1$ and
\begin{align}\label{eq0531a}
\lf(\int_0^\fz\lf\{t^{-\eta}\lf[\|f_0\|_{L^{p_0}(\rn, \widetilde{\ch}_0^\delta)}+
t\|f_1\|_{L^{p_1}(\rn, \widetilde{\ch}_0^\delta)}\r]\r\}^r\,\frac{dt}t\r)^{\frac1r}
\ls \|f\|_{L^{p,r}(\rn,\widetilde{\ch}_0^\delta)}.
\end{align}

Next, we let
$E_0:=\{x\in\rn:\ \frac12\cm_\mu (f)(x)\le \cm_\mu (f_0)(x)\}$ and
$$E_1:=\lf\{x\in\rn:\ \frac12\cm_\mu (f)(x)\le \cm_\mu (f_1)(x)\r\}.$$
Then, by an obvious inequality that $\cm_\mu(f)(x)\le \cm_\mu(f_0)(x)+\cm_{\mu}(f_1)(x)$ for any $x\in\rn$,
we have
$$\rn=E_0\cup E_1=E_0\cup(E_1\backslash E_0).$$
Then, using $p_0,p_1\in (\frac\delta n,\frac\delta \mu)$, Lemma \ref{lem0531}, and definitions of $E_0$ and $E_1$, we
conclude that
\begin{equation*}
\cm_\mu(f)=\cm_\mu(f)\mathbf{1}_{E_0}+\cm_\mu(f)\mathbf{1}_{E_1\backslash E_0}\in
L^{p_0}(\rn, \widetilde{\ch}_0^{\delta-\mu p})+L^{p_1}(\rn, \widetilde{\ch}_0^{\delta-\mu p}).
\end{equation*}
From this, Remark \ref{rem0920b}(i), \eqref{eq0531}, Lemma \ref{lem0531} again, \eqref{eq0531a}, and Remark \ref{rem0530}, we deduce that
\begin{align*}
&\|\cm_\mu(f)\|_{L^{p,r}(\rn,\ch_\fz^{\delta-\mu p})}\\
&\quad\sim \|\cm_\mu(f)\|_{L^{p,r}(\rn,\widetilde{\ch}_0^{\delta-\mu p})}
\sim\|\cm_\mu(f)\|_{(L^{p_0}(\rn, \widetilde{\ch}_0^{\delta-\mu p}),
L^{p_1}(\rn,\widetilde{\ch}_0^{\delta-\mu p}))_{\eta,r}}\\
&\quad\ls \lf\{\int_0^\fz \lf(t^{-\eta}\lf[\|\cm_\mu(f)\mathbf{1}_{E_0}\|_{L^{p_0}(\rn, \ch_\fz^{\delta-\mu p})}
+t\|\cm_\mu(f)\mathbf{1}_{E_1\backslash E_0}\|_{L^{p_1}(\rn, \ch_\fz^{\delta-\mu p})}\r]\r)^r\,\frac{dt}t\r\}^\frac1r\\
&\quad \ls  \lf\{\int_0^\fz \lf(t^{-\eta}\lf[\|f_0\|_{L^{p_0}(\rn, \widetilde{\ch}_0^\delta)}
+t\|f_1\|_{L^{p_1}(\rn, \widetilde{\ch}_0^\delta)}\r]\r)^r\,\frac{dt}t\r\}^\frac1r\\
&\quad \ls \|f\|_{L^{p,r}(\rn,\widetilde{\ch}_0^\delta)}\ls \|f\|_{L^{p,r}(\rn,\ch_\fz^\delta)}\ls \|f\|_{L^{p,s}(\rn,\ch_\fz^\delta)}.
\end{align*}
This finishes the proof of Theorem \ref{thm0530}.
\end{proof}

\section{Hedberg-Type Pointwise Estimates on Riesz Potentials}\label{s2'}

In this section, we establish some Hedberg-type pointwise estimates on Riesz potentials
in terms of Choquet--Lorentz integrals by borrowing some ideas from \cite{ad75,hh23}.
These estimates play an important role in the proofs of Theorems \ref{them-2} and \ref{them-rp}.

\begin{prop}\label{lem-0908b1}
Let $\alpha\in(0,n)$, $\mu\in[0,\alpha)$, and $\delta\in(0,n]$.
\begin{enumerate}
\item[{\rm(i)}] If $p\in(\frac{\delta}{n},\frac{\delta}{\alpha})$ and $q\in(0,\infty]$,
then there exists a positive constant $C_1$ such that,
for any $f\in L^1_{\rm{loc}}(\rn)$ and $x\in\rn$,
$$\int_{\rn}\frac{|f(y)|}{|x-y|^{n-\alpha}}\,dy
\leq C_1\lf[\cm_{\mu}f(x)\r]^{\frac{\delta-p\alpha}{\delta-\mu p}}
\lf\|f\r\|^{\frac{p(\alpha-\mu)}{\delta-\mu p}}
_{L^{p,\frac{q(\delta-p\alpha)}{\delta-\mu p}}(\rn,\mathcal{H}_{\infty}^{\delta})}.$$
\item[{\rm(ii)}] If $p=\frac{\delta}{n}$, then there exists a
positive constant $C_2$ such that, for any $f\in L^1_{\rm{loc}}(\rn)$ and $x\in\rn$,
$$\int_{\rn}\frac{|f(y)|}{|x-y|^{n-\alpha}}\,dy
\leq C_2\lf[\cm_{\mu}f(x)\r]^{\frac{\delta-p\alpha}{\delta-\mu p}}
\lf\|f\r\|^{\frac{p(\alpha-\mu)}{\delta-\mu p}}
_{L^{p}(\rn,\mathcal{H}_{\infty}^{\delta})}.$$
\end{enumerate}
\end{prop}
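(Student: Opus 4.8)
The plan is to split the Riesz potential integral at a radius $\rho\in(0,\fz)$ to be optimized later, writing
$$
\int_{\rn}\frac{|f(y)|}{|x-y|^{n-\alpha}}\,dy
=\int_{|x-y|<\rho}\frac{|f(y)|}{|x-y|^{n-\alpha}}\,dy
+\int_{|x-y|\ge\rho}\frac{|f(y)|}{|x-y|^{n-\alpha}}\,dy
=:\mathrm{I}(\rho)+\mathrm{II}(\rho).
$$
For the near part $\mathrm{I}(\rho)$, I would decompose the ball $B(x,\rho)$ into dyadic annuli $\{y:\ 2^{-k-1}\rho\le|x-y|<2^{-k}\rho\}$ for $k\in\nn\cup\{0\}$, bound $|x-y|^{-(n-\alpha)}$ by $(2^{-k-1}\rho)^{-(n-\alpha)}$ on the $k$-th annulus, and recognize the resulting average $\fint_{B(x,2^{-k}\rho)}|f|$ times $(2^{-k}\rho)^{\mu}$ as being controlled by $\cm_\mu f(x)$. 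Summing the geometric series in $2^{k(\mu-\alpha)}$ (which converges precisely because $\mu<\alpha$) yields $\mathrm{I}(\rho)\ls \rho^{\alpha-\mu}\cm_\mu f(x)$.

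For the far part $\mathrm{II}(\rho)$, I would again use dyadic annuli $A_k:=\{y:\ 2^k\rho\le|x-y|<2^{k+1}\rho\}$, $k\in\nn\cup\{0\}$, and estimate
$$
\int_{A_k}\frac{|f(y)|}{|x-y|^{n-\alpha}}\,dy\le (2^k\rho)^{\alpha-n}\int_{A_k}|f(y)|\,dy.
$$
Now the key point — and the place where Choquet--Lorentz structure enters — is to bound $\int_{A_k}|f(y)|\,dy$ by the Choquet--Lorentz quasi-norm of $f$ on the right-hand side. Using the layer-cake representation together with the subadditivity property (Lemma \ref{rema1}(v)) and the fact that $\mathcal{H}_\infty^\delta(A_k)\ls (2^k\rho)^\delta$ (a ball of radius $\sim 2^k\rho$ is a single covering ball), one gets, for each $\lambda>0$,
$$
\int_{A_k}|f|\,dy\le \lambda\,\mathcal{H}_\infty^\delta(A_k)+\int_\lambda^\fz \mathcal{H}_\infty^\delta(\{y:\ |f(y)|>t\})\,dt,
$$
and then estimating the tail integral by the $L^{p,\sigma}(\rn,\mathcal{H}_\infty^\delta)$ quasi-norm via Hölder's inequality in the $t$-variable (here $\sigma=\frac{q(\delta-p\alpha)}{\delta-\mu p}$ in part (i), and the plain $L^p$ case in part (ii)), and optimizing in $\lambda$, one should obtain $\int_{A_k}|f|\,dy\ls (2^k\rho)^{\delta(1-1/p)}\|f\|_{L^{p,\sigma}(\rn,\mathcal{H}_\infty^\delta)}$. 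Inserting this into the sum over $k$ gives a geometric series in $2^{k(\alpha-\delta/p)}$, convergent exactly because $p<\delta/\alpha$, so that $\mathrm{II}(\rho)\ls \rho^{\alpha-\delta/p}\|f\|_{L^{p,\sigma}(\rn,\mathcal{H}_\infty^\delta)}$.

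Finally, I would add the two estimates and optimize the free parameter: choosing $\rho$ so that $\rho^{\alpha-\mu}\cm_\mu f(x)\sim \rho^{\alpha-\delta/p}\|f\|_{L^{p,\sigma}(\rn,\mathcal{H}_\infty^\delta)}$, i.e.
$$
\rho\sim\lf(\frac{\|f\|_{L^{p,\sigma}(\rn,\mathcal{H}_\infty^\delta)}}{\cm_\mu f(x)}\r)^{\frac{p}{\delta-\mu p}},
$$
and substituting back produces exactly the stated exponents $\frac{\delta-p\alpha}{\delta-\mu p}$ on $\cm_\mu f(x)$ and $\frac{p(\alpha-\mu)}{\delta-\mu p}$ on the Choquet--Lorentz quasi-norm (one checks that $(\alpha-\mu)\cdot\frac{p}{\delta-\mu p}\cdot$ the relevant exponents reproduce these, and that $\delta-\mu p>0$ follows from $p<\delta/\alpha<\delta/\mu$). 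The case $p=\frac{\delta}{n}$ is handled the same way, with the $L^{p,\sigma}$ estimate of the tail replaced by the $L^p$ estimate, since $L^p=L^{p,p}$ and one simply does not need the extra Lorentz exponent. The main obstacle I anticipate is the far-part estimate: making the passage from $\int_{A_k}|f|$ to the Choquet--Lorentz quasi-norm rigorous requires care with the nonlinearity of the Choquet integral and with which Hölder-type inequality is available for the distribution function, and tracking the exponents through the optimization in $\lambda$ and then in $\rho$ is where the precise numerology of the theorem is pinned down.
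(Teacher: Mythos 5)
Your overall Hedberg strategy (split at a radius $\rho$, bound the near part by $\rho^{\alpha-\mu}\cm_\mu f(x)$ via dyadic annuli, bound the far part by $\rho^{\alpha-\delta/p}$ times a norm of $f$, then optimize in $\rho$) is exactly the paper's, and your near-part estimate and final optimization are fine. The genuine gap is in the far part: the displayed inequality $\int_{A_k}|f|\,dy\le\lambda\,\mathcal{H}_{\infty}^{\delta}(A_k)+\int_\lambda^{\fz}\mathcal{H}_{\infty}^{\delta}(\{y:\ |f(y)|>t\})\,dt$ is false when $\delta<n$, because the left-hand side is a Lebesgue integral and Lebesgue measure is not dominated by $\mathcal{H}_{\infty}^{\delta}$: for a ball of radius $R>1$ one has $|B|\sim R^n\gg R^\delta\sim\mathcal{H}_{\infty}^{\delta}(B)$, and the far annuli are precisely where $2^k\rho$ is large. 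The correct comparison carries a power, $|E|\sim\mathcal{H}_{\infty}^{n}(E)\le[\mathcal{H}_{\infty}^{\delta}(E)]^{n/\delta}$ (this is how the paper passes between the two scales), and running your layer-cake/optimization with it gives $\int_{A_k}|f|\ls(2^k\rho)^{\,n-\delta/p}\,\|f\|_{L^{p,\fz}(\rn,\mathcal{H}_{\infty}^{\delta})}$, not $(2^k\rho)^{\delta(1-1/p)}\|f\|$. Indeed your stated per-annulus bound is inconsistent with your own conclusion: multiplied by $(2^k\rho)^{\alpha-n}$ and summed it yields $\mathrm{II}(\rho)\ls\rho^{\,\alpha-n+\delta-\delta/p}\|f\|$, which equals $\rho^{\,\alpha-\delta/p}\|f\|$ only when $\delta=n$, and optimizing in $\rho$ with that exponent would not reproduce the exponents $\frac{\delta-p\alpha}{\delta-\mu p}$ and $\frac{p(\alpha-\mu)}{\delta-\mu p}$. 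A related issue: with the uncorrected $\mathcal{H}_{\infty}^{\delta}$ tail, the weak-type bound $\mathcal{H}_{\infty}^{\delta}(\{|f|>t\})\ls t^{-p}\|f\|^p_{L^{p,\fz}(\rn,\mathcal{H}_{\infty}^{\delta})}$ makes the tail integral converge only for $p>1$, whereas here $p>\frac\delta n$ may be at most $1$; with the power $n/\delta$ inserted the relevant exponent becomes $\frac{pn}{\delta}>1$, which is exactly the hypothesis, so the same correction repairs this as well.

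Once corrected, your route does work and is genuinely different from the paper's treatment of the far part: you would control $\mathrm{II}(\rho)$ by the weak quasi-norm $\|f\|_{L^{p,\fz}(\rn,\mathcal{H}_{\infty}^{\delta})}$ and then invoke the embedding of $L^{p,\sigma}(\rn,\mathcal{H}_{\infty}^{\delta})$ into $L^{p,\fz}(\rn,\mathcal{H}_{\infty}^{\delta})$, handling all $q$ at once, whereas the paper applies the H\"older inequality for classical Lorentz spaces to $f$ and the kernel $|x-\cdot|^{\alpha-n}$ on $\rn\setminus B(x,r)$ (with a case split according to the size of the second index) and then converts $\|f\|_{L^{pn/\delta,\cdot}}$ into $\|f\|_{L^{p,pq/\widetilde p}(\rn,\mathcal{H}_{\infty}^{\delta})}$ via the same $n/\delta$-power comparison. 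Finally, for (ii) the weak-type tail degenerates at $p=\frac\delta n$ (the exponent $\frac{pn}{\delta}$ equals $1$), so it is not handled by "the same optimization"; there you should instead use the strong bound $\int_{\rn}|f(y)|\,dy\ls\|f\|_{L^{\delta/n}(\rn,\mathcal{H}_{\infty}^{\delta})}$, i.e. \eqref{eq0530d}, as the paper does.
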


To prove Proposition \ref{lem-0908b1}, notice that,
for any Lebesgue measurable function $f$ on $\Omega$,
it holds
\begin{equation}\label{eq0530d}
\int_{\Omega}|f(x)|\,dx
\ls
\lf[\int_{\Omega}|f(x)|^{\frac \delta n}\,d\mathcal{H}_{\infty}^{\delta}
\r]^{\frac n \delta},
\end{equation}
where $\Omega\subset \rn$ is a domain, $n\ge 2$, and $\delta\in(0,n]$ (see \cite[Lemma 3]{hu03}).
Recall that the classical Lorentz space $L^{p,q}(\rn)$
is originally defined via the decreasing rearrangement
and further characterized by distribution functions
(see, for instance, \cite[Proposition 1.4.9]{ll02}).
Let $p\in(0,\infty)$ and $q\in(0,\infty]$. Then $f\in L^{p,q}(\rn)$
if and only if
\begin{align}\label{eq-0916a2}
\|f\|_{L^{p,q}(\rn)}:=
\lf\{\begin{array}{ll}
p^{\frac{1}{q}}\lf[\displaystyle\int_{0}^{\infty}
\lf\{\lambda |\{x\in\rn:\ |f(x)|>\lambda\}|^{\frac{1}{p}}\r\}^{q}\,\frac{d\lambda}
{\lambda}\r]^{\frac{1}{q}}&\rm{if}\ \emph{q}\in(0,\infty),\\
\sup\limits_{\lambda\in(0,\fz)}{\lambda}|\{x\in\rn:\ |f(x)|>\lambda\}|^{\frac{1}{p}}
&\rm{if}\ \emph{q}=\infty
\end{array}
\r.
\end{align}
is finite.

We now show Proposition \ref{lem-0908b1}.

\begin{proof}[Proof of Proposition \ref{lem-0908b1}]
Let $p\in[\frac{\delta}{n},\frac{\delta}{\alpha})$,
$q\in(0,\infty]$, $f\in L^1_{\rm{loc}}(\rn)$, and $x\in\rn$. For any $r\in(0,\fz)$, we write
\begin{equation}\label{eq-0922c}
\int_{\rn}\frac{|f(y)|}{|x-y|^{n-\alpha}}\,dy
=\lf\{\int_{B(x,r)}+
\int_{\rn \backslash B(x,r)}\r\}\frac{|f(y)|}{|x-y|^{n-\alpha}}\,dy.
\end{equation}
For the first term of the right-hand side of \eqref{eq-0922c}, by
the assumption that $\mu<\alpha$,
we find that
\begin{align}\label{eq-0908a2}
\int_{B(x,r)}\frac{|f(y)|}{|x-y|^{n-\alpha}}\,dy
&=\sum_{k=1}^{\infty}\int_{\{x\in\rn:\ 2^{-k}r
\le |x-y|<2^{-k+1}r\}}\frac{|f(y)|}{|x-y|^{n-\alpha}}\,dy \\
&\leq\sum_{k=1}^{\infty}\lf(\frac{2^{k}}{r}\r)^{n-\alpha}
\int_{B\lf(x,2^{-k+1}r\r)}|f(y)|\,dy
\ls r^{\alpha-\mu}\cm_{\mu}f(x).\nonumber
\end{align}

We first prove (i) in the following two steps.

\textbf{Step 1.} We show (i) for any $q\in(0,\fz)$.
To achieve this, let $\widetilde{p}:=\frac{p(\delta-\mu p)}{\delta-p\alpha}$.
Next, we estimate the second term of the right-hand side of
\eqref{eq-0922c} by considering the following two cases on $q$.

\textbf{Case 1.} $\,q\in(\frac{\widetilde p}{2p},\fz)$. In this case, $\frac{2pq}{\widetilde{p}}\in(1,\fz)$.
Then, by the fact $p>\frac{\delta}n$ and the H\"{o}lder inequality of Lorentz spaces
(see \cite[Theorem 1.4.17(vi)]{ll02}),
we conclude that
\begin{align*}
\int_{\rn\setminus{B(x,r)}}\frac{|f(y)|}{|x-y|^{n-\alpha}}\,dy
&\leq\|f\|_{L^{\frac{pn}{\delta},\frac{2pq}{\widetilde{p}}}(\rn\setminus{B(x,r)})}
\lf\|{|x-\cdot|^{-n+\alpha}}
\r\|_{L^{p^{\prime}_{1},q^{\prime}_{1}}(\rn\setminus{B(x,r)})},
\end{align*}
where $p^{\prime}_{1}$ and $q^{\prime}_{1}$ denote the conjugate
indices of $\frac{pn}{\delta}$ and $\frac{2pq}{\widetilde{p}}$, respectively.
Since $\mathcal{H}_{\infty}^{n}(K)\leq(\mathcal{H}_{\infty }^{\delta}(K))
^{\frac{n}{\delta}}$ for any set $K\subset \rn$ by both
\eqref{eq-0916a4} and $\delta\in(0,n]$, it follows from \eqref{eq-0905a2} that
\begin{align}\label{eq-0922d}
\|f\|_{L^{\frac{pn}{\delta},\frac{2pq}{\widetilde{p}}}(\rn\setminus{B(x,r)})}
&\sim\lf[\int_{0}^{\infty}{\lambda}^{\frac{2pq}{\widetilde{p}}}\lf|\lf\{y\in\rn
\setminus{B(x,r)}:\
\lf|f(y)\r|>\lambda\r\}\r|^{\frac{2q\delta}{\widetilde{p}n}}\,
\frac{d\lambda}{\lambda}\r]^{\frac{\widetilde{p}}{2pq}} \\
&\lesssim\lf\{\int_{0}^{\infty}{\lambda}^{\frac{2pq}{\widetilde{p}}}
\lf[\mathcal{H}_{\infty }^{n}\lf(\lf\{y\in\rn:\ \lf|f(y)\r|>\lambda\r\}\r)\r]^
{\frac{2q\delta}{\widetilde{p}n}}\,\frac{d\lambda}{\lambda}
\right\}^{\frac{\widetilde{p}}{2pq}} \nonumber\\
&\ls \lf\{\int_{0}^{\infty}{\lambda}^{\frac{2pq}{\widetilde{p}}}
\lf[\mathcal{H}_{\infty }^{\delta}\lf(\{y\in\rn:\ \lf|f(y)\r|>\lambda\}\r)\r]
^{\frac{2q}{\widetilde{p}}}\,\frac{d\lambda}{\lambda}\r\}
^{\frac{\widetilde{p}}{2pq}}.\nonumber
\end{align}
For any $\lambda\in(0,\fz)$, let
$h(\lambda):=\mathcal{H}_{\infty }^{\delta}(\{y\in\rn:\ |f(y)|>\lambda\})$.
Then the function $h$ is decreasing. Notice that $\frac{pq}{\widetilde{p}}
=\frac{q(\delta-p\alpha)}{\delta-\mu p}>0$.
Thus, we have
\begin{align*}
{\lambda}^{\frac{pq}{\widetilde{p}}}h(\lambda)^{\frac{q}{\widetilde{p}}}
&\sim\int_{0}^{\lambda}{t}^{\frac{pq}{\widetilde{p}}}h(\lambda)
^{\frac{q}{\widetilde{p}}}\,\frac{dt}{t}
\ls \int_{0}^{\lambda}{t}^{\frac{pq}{\widetilde{p}}}h(t)^{\frac{q}{\widetilde{p}}}
\,\frac{dt}{t} \\
&\ls \int_{0}^{\infty}{t}^{\frac{pq}{\widetilde{p}}}h(t)^{\frac{q}{\widetilde{p}}}
\,\frac{dt}{t}
\sim \|f\|^{\frac{pq}{\widetilde{p}}}_{L^{p,\frac{pq}{\widetilde{p}}}
(\rn,\mathcal{H}_{\infty }^{\delta })}.
\end{align*}
From this and \eqref{eq-0922d}, we infer that
\begin{align}\label{eq-0909Lc1}
\|f\|_{L^{\frac{pn}{\delta},\frac{2pq}{\widetilde{p}}}(\rn\setminus{B(x,r)})}
\ls \lf[\|f\|^{\frac{pq}{\widetilde{p}}}_{L^{p,\frac{pq}{\widetilde{p}}}
(\rn,\mathcal{H}_{\infty }^{\delta })}
\int_{0}^{\infty}{\lambda}^{\frac{pq}{\widetilde{p}}}
h(\lambda)^{\frac{q}{\widetilde{p}}}\,\frac{d\lambda}{\lambda}
\r]^{\frac{\widetilde{p}}{2pq}}
\sim \|f\|_{L^{p,\frac{pq}{\widetilde{p}}}(\rn,\mathcal{H}_{\infty }^{\delta })}.\end{align}
	
Now, we turn to estimate $\|{|x-\cdot|^{-n+\alpha}}\|_{L^{p^{\prime}_{1},
q^{\prime}_{1}}(\rn\setminus{B(x,r)})}$
for any given $x\in\rn$. By \eqref{eq-0916a2}, we obtain
\begin{align*}
\lf\|{|x-\cdot|^{-n+\alpha}}\r\|_{L^{p^{\prime}_{1},q^{\prime}_{1}}(\rn\setminus{B(x,r)})}
&\sim \left(\int_{0}^{\frac{1}{r^{n-\alpha}}}{\lambda}^{q^{\prime}_{1}}
\left|\left\{y\in\rn:\ r\le |x-y|<\left(\frac{1}{\lambda}
\right)^{\frac{1}{n-\alpha}}\right\}\right|^{\frac{q^{\prime}_{1}}
{p^{\prime}_{1}}}\,\frac{d\lambda}{\lambda} \right)^{\frac{1}{q^{\prime}_{1}}}\\		
&\lesssim\left[\int_{0}^{\frac{1}{r^{n-\alpha}}}{\lambda}^{q^{\prime}_{1}
-\frac{nq^{\prime}_{1}}{(n-\alpha)p^{\prime}_{1}}}
\,\frac{d\lambda}{\lambda} \right]^{\frac{1}{q^{\prime}_{1}}}\noz
\sim r^{-(n-\alpha)[1-\frac n{(n-\alpha)p^{\prime}_1}]}\sim r^{\alpha-\frac{\delta}{p}},\noz
\end{align*}
where we used the fact that $p<\frac{\delta}{\alpha}$ and hence $1-\frac{n}{(n-\alpha)p^{\prime}_{1}}>0$. Therefore,
in the case where $q>\frac{\widetilde{p}}{2p}$, it holds
\begin{align}\label{eq-1122}
\int_{\rn\setminus B(x,r)}\frac{|f(y)|}{|x-y|^{n-\alpha}}\,dy
\ls r^{\alpha-\frac \delta p}\lf\|f\r\|_{L^{p,\frac{pq}{\widetilde{p}}}
(\rn,\mathcal{H}_{\infty}^{\delta})}.
\end{align}
	
\textbf{Case 2.} $\,q\in(0,\frac{\widetilde{p}}{2p}]$. In this case, $\frac{2pq}{\widetilde{p}}\in(0,1]$.
From the H\"older inequality of Lorentz spaces in \cite[Theorem 1.4.17(v)]{ll02},
we deduce that
\begin{align*}
\int_{\rn\setminus{B(x,r)}}\frac{|f(y)|}{|x-y|^{n-\alpha}}\,dy
\ls\|f\|_{L^{\frac {pn}{\delta},\frac{2pq}{\widetilde{p}}}(\rn\setminus{B(x,r)})}
\lf\|{|x-\cdot|^{-n+\alpha}}\r\|_{L^{p^{\prime}_{1},\fz}(\rn\setminus{B(x,r)})},
\end{align*}
where $p^{\prime}_{1}$ still denotes the conjugate index of $\frac{pn}{\delta}$.
Applying an argument similar to that used in \eqref{eq-0909Lc1}, we find that
$$\|f\|_{L^{\frac{pn}{\delta},\frac{2pq}{\widetilde{p}}}(\rn\setminus{B(x,r)})}
\ls\|f\|_{L^{p,\frac{pq}{\widetilde{p}}}(\rn,\mathcal{H}_{\infty }^{\delta })}.$$		
Moreover, we also have
\begin{align*}	\lf\|{|x-\cdot|^{-n+\alpha}}
\r\|_{L^{p^{\prime}_{1},\fz}(\rn\setminus{B(x,r)})}
&=\sup_{\lambda\in(0,\fz)}\lambda\lf|\lf\{y\in\rn\setminus{B(x,r)}:\
|y-x|<\lf(\frac 1\lambda\r)^{\frac{1}{n-\alpha}}\r\}\r|^{\frac1{p^{\prime}_{1}}}\\			
&\leq\sup_{\lambda\in(0,\frac 1{r^{n-\alpha}})}\lambda\lf|\lf\{y\in\rn:\
|y-x|<\lf(\frac 1\lambda\r)^{\frac{1}{n-\alpha}}\r\}\r|^{\frac1{p^{\prime}_{1}}}
\ls r^{\alpha-\frac{\delta}{p}}.
\end{align*}
Thus, in this case, we also obtain the estimate \eqref{eq-1122}. Altogether,
we conclude that, for any $q\in(0,\fz)$,
\begin{equation}\label{eq11-17a}
\int_{\rn\setminus{B(x,r)}}\frac{|f(y)|}{|x-y|^{n-\alpha}}\,dy
\ls r^{\alpha-\frac{\delta}{p}}\lf\|f\r\|_{L^{p,\frac{pq}{\widetilde{p}}}
(\rn,\mathcal{H}_{\infty }^{\delta })}.
\end{equation}
	
Combining \eqref{eq-0922c}, \eqref{eq-0908a2}, and \eqref{eq11-17a}, we find that
\begin{align*}
\int_{\rn}\frac{|f(y)|}{|x-y|^{n-\alpha}}\,dy
\ls r^{\alpha-\mu}\cm_{\mu}f(x)+r^{\alpha-\frac{\delta}{p}}
\lf\|f\r\|_{L^{p,\frac {pq}{\widetilde{p}}}(\rn,\mathcal{H}_{\infty }^{\delta })}.
\end{align*}
By taking $r:=[\frac{\cm_{\mu}f(x)}{\|f\|_{L^{p,\frac {pq}{\widetilde{p}}}
(\rn,\mathcal{H}_{\infty }^{\delta })}}]^{-\frac{p}{\delta-\mu p}},$
we further obtain, for any $x\in\rn$,
\begin{align*}
\int_{\rn}\frac{|f(y)|}{|x-y|^{n-\alpha}}\,dy
\ls\lf[\cm_{\mu}f(x)\r]^{\frac{\delta-p\alpha}{\delta-\mu p}}
\|f\|^{\frac{p(\alpha-\mu)}{\delta-\mu p}}
_{L^{p,\frac{q(\delta-p\alpha)}{\delta-\mu p}}(\rn,\mathcal{H}_{\infty }^{\delta })}.
\end{align*}
This finishes the proof of (i)
for any $q\in(0,\infty)$.

\textbf{Step 2.} We show (i) for $q=\fz$. Applying the H\"{o}lder inequality
of Lorentz spaces
(see \cite[p.73]{ll02}), we find that
\begin{align*}
\int_{\rn\setminus{B(x,r)}}\frac{|f(y)|}{|x-y|^{n-\alpha}}\,dy
&\ls\|f\|_{L^{\frac{pn}{\delta},\fz}(\rn\setminus{B(x,r)})}
\lf\|{|x-\cdot|^{-n+\alpha}}\r\|_{L^{\frac{pn}{pn-\delta},1}(\rn\setminus{B(x,r)})}.
\end{align*}
Furthermore, we have
\begin{align*}
\|f\|_{L^{\frac{pn}{\delta},\fz}(\rn\setminus{B(x,r)})}
&=\sup_{\lambda\in(0,\fz)}{\lambda}|\{x\in\rn\setminus{B(x,r)}:\
|f(x)|>\lambda\}|^{\frac{\delta}{pn}}\\
&\ls\sup_{\lambda\in(0,\fz)}{\lambda}[\mathcal{H}_{\infty }
^{\delta}(\{x\in\rn:\ |f(x)|>\lambda\})]^{\frac{1}{p}}
\sim\|f\|_{L^{p,\fz}(\rn,\mathcal{H}_{\infty }^{\delta})}
\end{align*}
and $$\lf\|{|x-\cdot|^{-n+\alpha}}\r\|_{L^{\frac{pn}{pn-\delta},1}
(\rn\setminus{B(x,r)})}\ls r^{\alpha-\frac\delta p},$$
which, together with \eqref{eq-0922c} and \eqref{eq-0908a2}, further imply that
\begin{align*}
\int_{\rn}\frac{|f(y)|}{|x-y|^{n-\alpha}}\,dy
\ls r^{\alpha-\mu}\cm_{\mu}f(x)+r^{\alpha-\frac{\delta}{p}}
\lf\|f\r\|_{L^{p,\fz}(\rn,\mathcal{H}_{\infty }^{\delta })}.
\end{align*}
Letting $r:=[\frac{\cm_{\mu}f(x)}{\|f\|_{L^{p,\fz}
(\rn,\mathcal{H}_{\infty }^{\delta })}}]^{-\frac{p}{\delta-\mu p}},$
we further obtain, for any $x\in\rn$,
\begin{align*}
\int_{\rn}\frac{|f(y)|}{|x-y|^{n-\alpha}}\,dy
\ls\lf[\cm_{\mu}f(x)\r]^{\frac{\delta-p\alpha}{\delta-\mu p}}
\|f\|^{\frac{p(\alpha-\mu)}{\delta-\mu p}}
_{L^{p,\fz}(\rn,\mathcal{H}_{\infty }^{\delta })}.
\end{align*}
This finishes the proof of (i)
for $q=\infty$ and hence (i).
	
Next, we prove (ii). Let $p=\frac{\delta}{n}$. In this case, we first have
\begin{align}\label{eq-0927a}
\int_{\rn\setminus{B(x,r)}}\frac{|f(y)|}{|x-y|^{n-\alpha}}\,dy
\leq r^{\alpha-n}\int_{\rn\setminus{B(x,r)}}|f(y)|\,dy.
\end{align}
By \eqref{eq0530d}, we obtain
\begin{align}\label{eq-1016a}
\int_{\rn\setminus{B(x,r)}}|f(y)|\,dy
\ls\lf[\int_{\rn}|f(y)|
^{\frac \delta n}\,d\mathcal{H}_{\infty}^{\delta}\r]^{\frac n \delta}
\sim\|f\|_{L^{p}(\rn,\mathcal{H}_{\infty }^{\delta})}.
\end{align}
Combining \eqref{eq-0922c}, \eqref{eq-0908a2}, \eqref{eq-0927a}, and \eqref{eq-1016a},
we conclude that
$$
\int_{\rn}\frac{|f(y)|}{|x-y|^{n-\alpha}}\,dy
\ls r^{\alpha-\mu}\cm_{\mu}f(x)+r^{\alpha-\frac{\delta}{p}}
\|f\|_{L^{p}(\rn,\mathcal{H}_{\infty }^{\delta})}.
$$
By taking $r:=[\frac{\cm_{\mu}f(x)}{\|f\|_{L^{p}(\rn,
\mathcal{H}_{\infty }^{\delta })}}]^{-\frac p{\delta-\mu p}},$
we find that
$$\int_{\rn}\frac{|f(y)|}{|x-y|^{n-\alpha}}\,dy
\ls\lf[\cm_{\mu}f(x)\r]^{\frac{\delta-p \alpha}{\delta-\mu p}}
\lf\|f\r\|^{\frac{p(\alpha-\mu)}{\delta-\mu p}}
_{L^{p}(\rn,\mathcal{H}_{\infty}^{\delta})}.$$
This finishes the proof of (ii)
and hence Proposition \ref{lem-0908b1}.
\end{proof}

\begin{rem} We point out that
Proposition \ref{lem-0908b1}(i) with $q=\frac{p(\delta-\mu p)}{\delta-p\alpha}$
and $\alpha=1$ reduces back to \cite[Lemma 3.6]{hh23}. In this sense,
Proposition \ref{lem-0908b1} extends \cite[Lemma 3.6]{hh23} to Choquet--Lorentz
integrals and to all $\alpha\in(0,n)$. Moreover,
the estimate in Proposition \ref{lem-0908b1}(ii) is actually true for any $p\in[\frac\delta n,\frac{\delta}{\alpha})$ by
combining Proposition \ref{lem-0908b1}(i).
\end{rem}

\section{Proofs of Theorems \ref{them-1}, \ref{them-2}, and \ref{them-rp}}\label{s3}

In this section, we prove Theorems \ref{them-1}, \ref{them-2}, and \ref{them-rp}.
For this purpose, let $\delta\in(0, n]$. Recall that the \emph{$\delta$-dimensional Hausdorff measure
of $K\subset\rn$} is defined by setting
\begin{align*}
\mathcal{H}^{\delta}(K):=\lim_{\varrho\rightarrow0^+}\inf
\left\{\sum_{i} r_{i}^{\delta}:\
K\subset\bigcup_{i}B(x_i,r_i),
\,r_i\leq\varrho~ \rm{for~any}~\emph{i}~\right\},
\end{align*}
where the infimum is taken over all such finite or countable ball
covers $\{B(x_{i},r_{i})\}_{i}$ of $K$ that $r_{i}\leq\varrho$ for any $i$.
When $\delta=n$, there exists a
positive constant $C$, depending only on $n$, such that, for any
Lebesgue measurable set $K\subset\rn$,
\begin{align}\label{eq-0905a1}
C^{-1}\mathcal{H}^{n}\left(K\right)
\leq|K|\leq C\mathcal{H}^{n}\left(K \right).
\end{align}
Here $|K|$ denotes the \emph{Lebesgue measure} of $K$.
For more properties of the Hausdorff measure we refer
to \cite[Chapter 2]{EG1992}. In particular, we have the following relation between
the Hausdorff content and the Hausdorff measure when $\delta=n$
(see \cite[Proposition 2.5]{hh23}):
There exists a positive constant $C$, depending only on $n$,
such that, for any $K\subset\rn$,
$\mathcal{H}_{\infty }^{n}(K)\leq\mathcal{H}^{n}(K)
\leq C\mathcal{H}_{\infty }^{n}(K).$
Combining this and \eqref{eq-0905a1}, we easily conclude that
there exists a positive constant $C$, depending only on $n$,
such that, for any Lebesgue measurable set $K\subset\rn$,
\begin{align}\label{eq-0905a2}
C^{-1}\mathcal{H}_{\infty }^{n}(K)\leq|K|\leq C\mathcal{H}_{\infty }^{n}(K).
\end{align}
Thus, when $\delta=n$, the Hausdorff content $\ch_\fz^\delta$ is
equivalent to the Lebesgue measure.
In this case, the space $L^{p,q}(\rn, \ch_\fz^\delta)$
coincides with
the classical Lorentz space $L^{p,q}(\rn)$.

The boundedness of $\cm$ on $L^p(\rn,\mathcal{H}_{\infty }^{\delta})$
is presented below, which is exactly \cite[Theorem]{hu03}.

\begin{lem}\label{a1.2}
Let $\delta\in(0,n]$. If $p\in(\frac \delta n,\fz)$, then, for any $f\in L^p(\rn,\ch_\fz^\delta)$,
it holds
$$\|\cm(f)\|_{L^p(\rn,\ch_\fz^\delta)}\ls \|f\|_{L^p(\rn,\ch_\fz^\delta)}$$
and, if $p=\frac \delta n$, then
\begin{align*}
\sup_{\lambda\in(0,\fz)}\lambda{\mathcal{H}}_{\infty }^{\delta}\lf(\lf\{x\in\rn:\
\cm (f)(x)>\lambda\r\}\r)^{1/p}
\ls \|f\|_{L^p(\rn,\ch_\fz^\delta)},
\end{align*}
where the implicit positive constants are independent of $f$.
\end{lem}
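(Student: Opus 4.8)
I would begin by noting that Lemma \ref{a1.2} is nothing but the special case $\mu=0$ of Lemma \ref{lem0531} (equivalently, it is \cite[Theorem]{hu03}), so in principle there is nothing new to prove and one may simply quote it. For a self-contained argument, the plan is to reduce everything to the endpoint weak-type estimate
\begin{equation*}
\lambda^{\frac\delta n}\,\mathcal{H}_{\infty}^{\delta}\lf(\lf\{x\in\rn:\ \cm(f)(x)>\lambda\r\}\r)\ls\int_{\rn}|f(x)|^{\frac\delta n}\,d\mathcal{H}_{\infty}^{\delta}\qquad\text{for all }\lambda\in(0,\fz),
\end{equation*}
which (after raising to the power $n/\delta$) is exactly the assertion for $p=\frac\delta n$, and then to upgrade it to the strong-type bound for all $p\in(\frac\delta n,\fz)$ by a layer-cake (Marcinkiewicz-type) computation that also uses the trivial inequality $\|\cm(g)\|_{L^\fz(\rn)}\le\|g\|_{L^\fz(\rn)}$.

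To prove the weak-type estimate, fix $\lambda\in(0,\fz)$. For each $x$ in the (open) set $\{\cm(f)>\lambda\}$ choose a ball $B_x\ni x$ with $\fint_{B_x}|f|>\lambda$, and by the Vitali $5r$-covering lemma extract a countable pairwise disjoint subfamily $\{B_j\}_j$ with $\{\cm(f)>\lambda\}\subset\bigcup_j 5B_j$; then, by Lemma \ref{rema1}, $\mathcal{H}_{\infty}^{\delta}(\{\cm(f)>\lambda\})\le 5^{\delta}\sum_j r_j^{\delta}$. Since $\lambda|B_j|<\int_{B_j}|f|$ and $r_j^{\delta}\sim|B_j|^{\delta/n}$, an application of \eqref{eq0530d} on each ball $B_j$ would give $\lambda^{\delta/n}r_j^{\delta}\ls\int_{B_j}|f|^{\delta/n}\,d\mathcal{H}_{\infty}^{\delta}$. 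Here lies the main obstacle: since $\mathcal{H}_{\infty}^{\delta}$ is only subadditive and \emph{not} additive over disjoint sets [see Lemma \ref{rema1}(v)], one cannot deduce $\sum_j\int_{B_j}|f|^{\delta/n}\,d\mathcal{H}_{\infty}^{\delta}\ls\int_{\rn}|f|^{\delta/n}\,d\mathcal{H}_{\infty}^{\delta}$ when $\delta<n$, so the Vitali balls must not be summed term by term. This is precisely the point addressed by the covering lemma of Adams \cite{su09}, simplified by Orobitg and Verdera \cite{hu03}: one replaces the Vitali family by a dyadic Calder\'on--Zygmund decomposition of $\rn$ at height $\lambda$ and consolidates the resulting maximal dyadic cubes into a genuinely efficient cover of $\{\cm(f)>\lambda\}$ (spatially clustered small cubes being absorbed into a common dyadic ancestor), in the course of which the comparison of $\mathcal{H}_{\infty}^{\delta}$ with the dyadic content $\widetilde{\mathcal{H}}_{0}^{\delta}$ (Remark \ref{rem0920b}) and the inequality \eqref{eq0530d} are used in tandem. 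I would import this covering lemma essentially verbatim.

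Finally, to pass from weak type to strong type, I would fix $p\in(\frac\delta n,\fz)$ and, for each $\lambda\in(0,\fz)$, split $f=f\mathbf{1}_{\{|f|>\lambda/2\}}+f\mathbf{1}_{\{|f|\le\lambda/2\}}$. Because $\cm$ is sublinear and does not increase the sup-norm, $\cm(f)\le\cm(f\mathbf{1}_{\{|f|>\lambda/2\}})+\lambda/2$, so $\{\cm(f)>\lambda\}\subset\{\cm(f\mathbf{1}_{\{|f|>\lambda/2\}})>\lambda/2\}$, and the weak-type estimate applied to the first piece gives $\mathcal{H}_{\infty}^{\delta}(\{\cm(f)>\lambda\})\ls\lambda^{-\delta/n}\int_{\{|f|>\lambda/2\}}|f|^{\delta/n}\,d\mathcal{H}_{\infty}^{\delta}$. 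Plugging this into
\begin{equation*}
\|\cm(f)\|_{L^p(\rn,\mathcal{H}_{\infty}^{\delta})}^{p}=p\int_0^\fz\lambda^{p-1}\,\mathcal{H}_{\infty}^{\delta}\lf(\lf\{x\in\rn:\ \cm(f)(x)>\lambda\r\}\r)\,d\lambda
\end{equation*}
and interchanging the order of integration (Tonelli, all integrands being non-negative), the inner integral in $\lambda$ converges precisely because $p>\frac\delta n$, and the whole expression collapses to a constant multiple of $\int_0^\fz\lambda^{p-1}\mathcal{H}_{\infty}^{\delta}(\{|f|>\lambda\})\,d\lambda\sim\|f\|_{L^p(\rn,\mathcal{H}_{\infty}^{\delta})}^{p}$; this last step is routine and is done directly (rather than by citing an interpolation theorem) since $\|\cdot\|_{L^p(\rn,\mathcal{H}_{\infty}^{\delta})}$ is merely a quasi-norm. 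Together with the weak-type estimate, this completes the proof.
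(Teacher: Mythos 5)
Your proposal is correct and takes essentially the same route as the paper: the paper offers no proof of this lemma, simply noting that it is exactly the theorem of Orobitg and Verdera \cite[Theorem]{hu03} (equivalently the case $\mu=0$ of Lemma \ref{lem0531}), which is precisely your primary move. Your supplementary sketch (the endpoint weak-type bound via the Adams--Orobitg--Verdera covering argument, then the splitting $f=f\mathbf{1}_{\{|f|>\lambda/2\}}+f\mathbf{1}_{\{|f|\le\lambda/2\}}$ and a layer-cake computation using $p>\frac{\delta}{n}$) is consistent with how the cited result is established, so nothing further is required.
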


As an analogue of \eqref{eq0530d} for Lorentz spaces,
we have the following conclusion.
\begin{lem}\label{lem-1018b}
Let $\Omega\subset\rn$ with $n\geq2$ be a domain, $\delta\in(0,n]$,
$p\in(0,\infty)$, and $q\in(0,\infty]$.
Then, for any Lebesgue measurable function $f$,
\begin{equation}\label{eq-1202a}
\|f\|_{L^{p,q}(\Omega)}\ls\|f\|_{L^{\frac {p\delta} n,q}
(\Omega,\mathcal{H}_{\infty}^{\delta})}
\end{equation}
with the implicit positive constant independent of $f$ and $\Omega$.
\end{lem}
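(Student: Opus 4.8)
The plan is to reduce the Lorentz-norm estimate \eqref{eq-1202a} to the corresponding pointwise/distributional comparison between the Lebesgue measure and the Hausdorff content $\mathcal{H}_\infty^\delta$, which is already available from the inequality $\mathcal{H}_\infty^n(K)\le[\mathcal{H}_\infty^\delta(K)]^{n/\delta}$ for any $K\subset\rn$ (a consequence of \eqref{eq-0916a4} together with $\delta\in(0,n]$) and the comparison \eqref{eq-0905a2} between $|\cdot|$ and $\mathcal{H}_\infty^n$. First I would write both sides in their distribution-function form: by \eqref{eq-0916a2}, for $q\in(0,\fz)$,
$$\|f\|_{L^{p,q}(\Omega)}\sim\lf[\int_0^\fz\lz^q\lf|\lf\{x\in\Omega:\ |f(x)|>\lz\r\}\r|^{q/p}\,\frac{d\lz}\lz\r]^{1/q},$$
and, by Definition \ref{def-a2}, $\|f\|_{L^{\frac{p\delta}n,q}(\Omega,\mathcal{H}_\infty^\delta)}\sim\lf[\int_0^\fz\lz^q\{\mathcal{H}_\infty^\delta(\{x\in\Omega:\ |f(x)|>\lz\})\}^{\frac{q}{p\delta/n}}\,\frac{d\lz}\lz\r]^{1/q}$ (absorbing the harmless constant $\frac{p\delta}n$ into the equivalence).

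The key step is then the observation that for each fixed $\lz\in(0,\fz)$, applying \eqref{eq-0905a2} to the superlevel set $K_\lz:=\{x\in\Omega:\ |f(x)|>\lz\}$ (which is a subset of $\rn$, and which we may take Lebesgue measurable by replacing $f$ by a measurable representative if needed) gives
$$|K_\lz|\ls\mathcal{H}_\infty^n(K_\lz)\le\lf[\mathcal{H}_\infty^\delta(K_\lz)\r]^{n/\delta},$$
so that $|K_\lz|^{1/p}\ls[\mathcal{H}_\infty^\delta(K_\lz)]^{n/(p\delta)}$, which is exactly the integrand-level comparison needed. Raising to the power $q$, integrating against $\lz^q\,\frac{d\lz}\lz$, and taking the $q$-th root yields \eqref{eq-1202a} for $q\in(0,\fz)$; the constant depends only on $n$ (through \eqref{eq-0905a2}) and not on $f$ or $\Omega$, since the comparison \eqref{eq-0905a2} is uniform. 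For $q=\fz$ the argument is even shorter: from $|K_\lz|^{1/p}\ls[\mathcal{H}_\infty^\delta(K_\lz)]^{n/(p\delta)}$ one multiplies by $\lz$ and takes the supremum over $\lz\in(0,\fz)$, directly comparing $\|f\|_{L^{p,\fz}(\Omega)}$ with $\|f\|_{L^{\frac{p\delta}n,\fz}(\Omega,\mathcal{H}_\infty^\delta)}$.

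I do not expect a serious obstacle here; this is essentially a transcription of \eqref{eq0530d} (which is the case $p=\frac\delta n$, $q=\frac\delta n$ up to the exponent bookkeeping) into the Lorentz scale, and the only mild point to be careful about is that $\mathcal{H}_\infty^n$ dominates $|\cdot|$ only up to the $n/\delta$-th power of $\mathcal{H}_\infty^\delta$ rather than $\mathcal{H}_\infty^\delta$ itself — which is precisely why the Lebesgue exponent on the left is $p$ while the Choquet exponent on the right is $\frac{p\delta}n$. One should also note the exponent identity $\frac{q}{p\delta/n}\cdot\frac{n}{\delta}=\frac qp\cdot\lf(\frac n\delta\r)^2$ is not what appears; the correct matching is $[\mathcal{H}_\infty^\delta(K_\lz)]^{\frac{q}{p\delta/n}}=\{[\mathcal{H}_\infty^\delta(K_\lz)]^{n/\delta}\}^{q/p}\gs|K_\lz|^{q/p}$, so the powers line up cleanly and no further interpolation or covering argument is required.
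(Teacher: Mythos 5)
Your proposal is correct and follows essentially the same route as the paper: apply the elementary bound $|E|\sim\mathcal{H}_{\infty}^{n}(E)\lesssim[\mathcal{H}_{\infty}^{\delta}(E)]^{n/\delta}$ (from \eqref{eq-0916a4} and \eqref{eq-0905a2}) to each superlevel set and insert this into the distribution-function forms of the two Lorentz quasi-norms, with the exponents matching exactly as you indicate. The paper's proof is just a condensed version of this same argument, so no further comment is needed.
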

\begin{proof}
By \eqref{eq-0916a4},
we find that, for any Lebesgue measurable set $E$ of $\rn$,
$\ch_\fz^n(E) \le [\ch_\fz^\delta(E)]^\frac n\delta,$
which, together with \eqref{eq-0905a2}, further implies that
$|E|\sim \ch_\fz^n(E) \ls [\ch_\fz^\delta(E)]^\frac n\delta.$
From this, together with the definitions of
$\|f\|_{L^{p,q}(\Omega)}$ and $\|f\|_{L^{\frac {p\delta} n,q}
(\Omega,\mathcal{H}_{\infty}^{\delta})}$, we further infer that
\eqref{eq-1202a} holds. This finishes the proof of Lemma \ref{lem-1018b}.
\end{proof}

We now prove Theorem \ref{them-1}.

\begin{proof}[Proof of Theorem \ref{them-1}]
To show (i), let $p,q\in(\frac{\delta}{n},\infty)$. We may assume
$\|\nabla u\|_{L^{p,q}(\Omega,\mathcal{H}_{\infty}^{\delta})}$ is finite.
Since $\frac{pn}{\delta}>1$, it follows from
the H\"older inequality of Lorentz spaces
(see \cite[(v) and (vi) of Theorem 1.4.17 and p.\,73]{ll02}) and Lemma \ref{lem-1018b}
that
\begin{align}\label{3}
\int_{\Omega}|\nabla u|\,dx
\ls\|\nabla u\|_{L^{\frac{pn}{\delta},q}(\Omega)}\|\mathbf{1}_{\Omega}
\|_{L^{({\frac{pn}{\delta}})^\prime,q^\prime}(\Omega)}
\ls\|\nabla u\|_{L^{p,q}(\Omega,\mathcal{H}_{\infty}^{\delta})},
\end{align}
where we used the assumption that the domain $\Omega$ is bounded
in the last inequality.
This further implies
$|\nabla u|\in L^1(\Omega)$ and hence both the Riesz potential and the maximal function of
$|\nabla u|$ are well defined.
	
By \cite[Theorem]{ns12} (see also \cite{Boj88,Tr92}), we find that,
for any given $(\alpha,\beta)$-John domain $\Omega$ with the John center $x_0\in\rn$
and for any $u\in C^1(\Omega)$,
\begin{align}\label{eq-0909Lc3'}
|u(x)-u_B|\le C_{(n)}\lf(\frac{\beta}{\alpha}\r)^{2n}\int_{\Omega}
\frac{|\nabla u(y)|}{|x-y|^{n-1}}\,dy,\quad \forall\,x\in \Omega,
\end{align}
where $C_{(n)}$ is a positive constant depending only on $n$,
$$B:=B\lf(x_0,C_{(n)}\frac{\alpha^2}{\beta}\r),~~~\text{and}~~~u_B:=\frac{1}{|B|}\int_{B}u(x)\,dx.$$
Moreover, by \cite[Lemma 2.8.3]{zsy16} (see also \cite[Lemma]{HL1972}),
we conclude that the Riesz potential
on the bounded domain $\Omega$ can be estimated by the Hardy--Littlewood
maximal operator, which further implies that, for any $u\in C^1(\Omega)$
and $x\in \Omega$,
\begin{equation}\label{eq11-17'}
|u(x)-u_B|\ls\lf(\frac{\beta}{\alpha}\r)^{2n}{\rm diam}
(\Omega)\cm|\nabla u|(x).
\end{equation}
With the understanding that $|\nabla u|$ is zero outside $\Omega$,
combining \eqref{eq11-17'} and Theorem \ref{thm0530} with $\mu=0$, we conclude that
\begin{align*}
\|u-u_B\|_{L^{p,q}(\Omega,\mathcal{H}_{\infty }^{\delta })}
&\ls\lf(\frac{\beta}{\alpha}\r)^{2n}{\rm diam}(\Omega)
\lf\{\int_{0}^{\infty}\lambda^{q}\lf[\mathcal{H}_{\infty }^{\delta }
(\{x\in\Omega:\ \cm|\nabla u|(x)>\lambda\})\r]^{\frac{q}{p}}
\,\frac{d\lambda}{\lambda}\r\}^{\frac{1}{q}}\\
&\sim\lf(\frac{\beta}{\alpha}\r)^{2n}{\rm diam}(\Omega)~
\|\cm|\nabla u|\|_{L^{p,q}(\Omega,\mathcal{H}_{\infty }^{\delta })}
\ls\beta\lf(\frac{\beta}{\alpha}\r)^{2n}
\|\nabla u\|_{L^{p,q}(\Omega,\mathcal{H}_{\infty }^{\delta })},
\end{align*}
where we used ${\rm diam}(\Omega) \le 2\beta$ in the last inequality.
This finishes the proof of (i).

For (ii), similarly to the proof of (i), we also find that
$|\nabla u|$ is well defined.
If $p=\frac \delta{n}$, by \eqref{eq11-17'} and
the weak boundedness of
$\cm$ in  Lemma \ref{a1.2}, we obtain
\begin{align*}
\|u-u_B\|_{L^{p,\fz}(\Omega,\mathcal{H}_{\infty }^{\delta })}
&\ls\lf(\frac{\beta}{\alpha}\r)^{2n}{\rm diam}(\Omega)
\sup_{\lambda\in(0,\fz)}\lambda\lf[\mathcal{H}_{\infty }^{\delta }
(\{x\in\Omega:\ \cm|\nabla u|(x)>\lambda\})\r]^{\frac{1}{p}}\\
&\ls\lf(\frac{\beta}{\alpha}\r)^{2n}{\rm diam}(\Omega)
\|\nabla u\|_{L^{p}(\Omega,\ch_\fz^\delta)}
\ls\beta \lf(\frac{\beta}{\alpha}\r)^{2n}
\|\nabla u\|_{L^{p}(\Omega,\ch_\fz^\delta)}.
\end{align*}
Therefore, (ii) is true, which completes the proof of Theorem \ref{them-1}.
\end{proof}

We next show Theorem \ref{them-2}.

\begin{proof}[Proof of Theorem \ref{them-2}]
Let $\mu\in[0,1)$ and $\delta\in(0,n]$. We first prove (i).
To achieve this, suppose $p\in(\frac{\delta}{n},\delta)$ and
$q\in(\frac{\delta(\delta-\mu p)}{n(\delta-p)},\infty)$.
Then, applying \eqref{3},
we also obtain $|\nabla u|\in L^1(\Omega)$. From \eqref{eq-0909Lc3'}
and Proposition \ref{lem-0908b1}(i) with the
understanding that $|\nabla u|$ is zero outside $\Omega$ and $\alpha=1$,
we infer that, for any $x\in \Omega$,
\begin{align*}
|u(x)-u_B|
\le C_{(n)}\lf(\frac{\beta}{\alpha}\r)^{2n}\int_{\Omega}
\frac{|\nabla u(y)|}{|x-y|^{n-1}}\,dy
&\ls\left[\cm_{\mu}|\nabla u|(x)\right]^{\frac{\delta-p}{\delta-\mu p}}
\|\nabla u\|^{\frac{p(1-\mu)}
{\delta-\mu p}}_{L^{p,\frac{q(\delta-p)}{\delta-\mu p}}
(\Omega,\mathcal{H}_{\infty }^{\delta })},
\end{align*}
where $B$ is the same as in \eqref{eq-0909Lc3'}. Thus, by
Lemma \ref{lem0909Lc1} and Theorem \ref{thm0530} with the fact that $\frac{q(\delta-p)}{\delta-\mu p}>\frac{\delta}n$, we find that
\begin{align*}
&\lf\|u-u_B\r\|_{L^{\frac{p(\delta-\mu p)}{\delta-p},q}(\Omega,
\mathcal{H}_{\infty }^{\delta-\mu p})}\\
&\hs\ls \|\nabla u\|^{\frac{p(1-\mu)}{\delta-\mu p}}
_{L^{p,\frac{q(\delta-p)}{\delta-\mu p}}(\Omega,\mathcal{H}_{\infty
}^{\delta })}\lf\|\lf[\cm_{\mu}|\nabla u|\r]^{\frac{\delta-p}
{\delta-\mu p}}\r\|_{L^{\frac{p(\delta-\mu p)}{\delta-
p},q}(\Omega,\mathcal{H}_{\infty }^{\delta-\mu p })} \\
&\hs\sim \|\nabla u\|^{\frac{p(1-\mu)}{\delta-\mu p}}
_{L^{p,\frac{q(\delta-p)}{\delta-\mu p}}
(\Omega,\mathcal{H}_{\infty }^{\delta })}
\lf\|\cm_{\mu}|\nabla u|\r\|^{\frac{\delta-p}{\delta-\mu
p}}_{L^{p,\frac{q(\delta-p)}{\delta-\mu p}}(\Omega,\mathcal{H}
_{\infty }^{\delta-\mu p })} \\
&\hs\ls\|\nabla u\|^{\frac{p(1-\mu)}{\delta-\mu p}}
_{L^{p,\frac{q(\delta-p)}{\delta-\mu p}}(\Omega,\mathcal{H}
_{\infty }^{\delta })}\|\nabla u\|^{\frac{\delta-p}{\delta-\mu p}}
_{L^{p,\frac{q(\delta-p)}{\delta-\mu p}}(\Omega,\mathcal{H}_{\infty }^{\delta })}
\sim\|\nabla u\|_{L^{p,\frac{q(\delta-p)}{\delta-\mu p}}
(\Omega,\mathcal{H}_{\infty }^{\delta })},
\end{align*}
which completes the proof of (i).

For (ii), let $p=\frac{\delta}{n}$.
By \eqref{eq-0909Lc3'} and Proposition \ref{lem-0908b1}(ii) with the
understanding that $|\nabla u|$ is zero outside $\Omega$ and $\alpha=1$,
 we conclude that, for any $x\in \Omega$,
\begin{align*}
|u(x)-u_B|
\le C_{(n)}\lf(\frac{\beta}{\alpha}\r)^{2n}\int_{\Omega}
\frac{|\nabla u(y)|}{|x-y|^{n-1}}\,dy
&\ls \left[\cm_{\mu}(|\nabla u|)(x)\right]^{\frac{\delta-p}{\delta-\mu p}}
\|\nabla u\|^{\frac{p(1-\mu)}
{\delta-\mu p}}_{L^{p}
(\Omega,\mathcal{H}_{\infty }^{\delta })},
\end{align*}
which, combined with Lemma \ref{lem0909Lc1} and the weak boundedness of
$\cm_{\mu}$ in Lemma \ref{lem0531}, further implies that
\begin{align*}
\lf\|u-u_B\r\|_{L^{\frac{p(\delta-\mu p)}{\delta-p},\fz}(\Omega,
\mathcal{H}_{\infty }^{\delta-\mu p})}
&\hs\ls \|\nabla u\|^{\frac{p(1-\mu)}
{\delta-\mu p}}_{L^{p}
(\Omega,\mathcal{H}_{\infty }^{\delta })}
\lf\|\cm_{\mu}(|\nabla u|)\r\|^{\frac{\delta-p}{\delta-\mu p}}_{L^{p,\fz}(\Omega,\mathcal{H}
_{\infty }^{\delta-\mu p})} \\
&\hs\ls\|\nabla u\|^{\frac{p(1-\mu)}
{\delta-\mu p}}_{L^{p}
(\Omega,\mathcal{H}_{\infty }^{\delta })}\|\nabla u\|^{\frac{\delta-p}{\delta-\mu p}}
_{L^{p}(\Omega,\mathcal{H}_{\infty }^{\delta })}
\sim\|\nabla u\|_{L^{p}
(\Omega,\mathcal{H}_{\infty }^{\delta})}.
\end{align*}
Thus, conclusion (ii) is proved.
This then finishes the proof of Theorem \ref{them-2}.
\end{proof}

We would like to mention that, after this article is near finishing, we find that Harjulehto and
Hurri-Syrj\"{a}nen \cite[Theorem 3.6]{hharx} and
\cite[Theorem 4.11]{hharx1} have also obtained
an analog of Theorem \ref{them-2}(ii),
while our article concerns quite different topics
from \cite{hharx,hharx1};
hence, these articles cannot cover each other and
are of independent interest.

We now prove that the exponent $\frac{p(\delta-\mu p)}{\delta-p}$ in
Theorem \ref{them-2} is the best possible.
To show this, we give the following example.

\begin{Example}\label{ex-0927}	
Let $\Omega:=B(\mathbf{0},1)\backslash\{\mathbf{0}\}$ and $u_0(x):=|x|^{\eta}\textbf{1}_{\Omega}(x)$ for any $x\in\rn,$
where $\eta\in(-\fz,0)$ is chosen later. Then $u_0\in C^{1}(\Omega)$.
Let $\delta\in(0,n]$,
$\mu\in[0,1)$, and $p\in(\frac{\delta} n,\delta)$.
We claim that, if $s\in(\frac{p(\delta-\mu p)}{\delta-p},\fz)$,
then there exists $\eta$ such that
$\| u_0-b\|_{L^{s,q}(\Omega,\mathcal{H}_{\infty }^{\delta-\mu p})}
=\infty$ for any $b\in\rr$ and $q\in(0,\fz)$,	
but
\begin{equation}\label{eq0619a}
\|\nabla u_0\|_{L^{p,\widetilde{q}}(\Omega,\mathcal{H}_{\infty }^{\delta })}
<\infty\quad {\rm for\ any}\quad \widetilde{q}\in(0,\fz).
\end{equation}
	
Given any $b\in\rr$,  since $\eta<0$, it follows that there exists
$r\in(0,1)$ such that $\frac{1}{2} u_0(x)>b$ for any $x\in \rn$ satisfying
$|x|\in(0,r)$.
Then we obtain
\begin{align*}
\| u_0-b\|_{L^{s,q}(\Omega,\mathcal{H}_{\infty }^{\delta-\mu p})}^q
&\gs\int_0^{\infty}{\lambda}^q\lf[\mathcal{H}_{\infty }^{\delta-\mu p}
\lf(\lf\{x\in B(\mathbf{0},r):\ u_0(x)>2\lambda\r\}\r)\r]
^{\frac{q}{s}}\,\frac{d\lambda}{\lambda}\\
&\sim\int_0^{\infty}{\lambda}^q\lf[\mathcal{H}_{\infty }
^{\delta-\mu p}\lf(\lf\{x\in B(\mathbf{0},r):\ |x|<(2\lambda)^\frac1{\eta}\r\}\r)\r]
^{\frac{q}{s}}\,\frac{d\lambda}{\lambda}\noz\\
&\gs\int_{\frac{r^{\eta}}{2}}^{\infty}{\lambda}^q
\lf[\mathcal{H}_{\infty }^{\delta-\mu
p}\lf(B\lf(\mathbf{0},(2\lambda)^\frac1{\eta}\r)\r)\r]^{\frac{q}{s}}\,\frac{d\lambda}{\lambda}
\sim\int_{\frac{r^{\eta}}{2}}^{\infty}{\lambda}
^{q+\frac{(\delta-\mu p)q}{\eta s}}\,\frac{d\lambda}{\lambda}.\noz
\end{align*}
The last integral is infinite when $q+\frac{(\delta-\mu p)q}{\eta s}\geq0$,
that is, $\eta\leq-\frac{\delta-\mu p}{s}$.
	
On the other hand, observing that $|\nabla u_0(x)|=|\eta||x|^{\eta-1}$
for any $x\in \Omega$, we find that
\begin{align}\label{eq-0924b}
\|\nabla u_0\|_{L^{p,\widetilde{q}}(\Omega,\mathcal{H}_{\infty }
^{\delta })}^{\widetilde{q}}
&\sim \int_0^{\infty}{\lambda}^{\widetilde{q}}
\lf[\mathcal{H}_{\infty }^{\delta}
\lf(\lf\{x\in\Omega:\ |\eta||x|^{\eta-1}>\lambda\r\}\r)\r]
^{\frac{\widetilde{q}}p}\,\frac{d\lambda}{\lambda}\\
&\ls \int_0^{1}{\lambda}^{\widetilde{q}}\lf[\mathcal{H}_{\infty }
^{\delta}(B(\mathbf{0},1))\r]
^{\frac{\widetilde{q}}{p}}\,\frac{d\lambda}{\lambda}
+\int_1^{\infty}{\lambda}^{\widetilde{q}}
\lf[\mathcal{H}_{\infty }^{\delta}\lf(B\lf(\mathbf{0},c\lambda^{\frac1{\eta-1}}\r)\r)
\r]^{\frac{\widetilde{q}}p}\,
\frac{d\lambda}{\lambda},\noz
\end{align}
where $c:={\frac{1}{|\eta|^{1/(\eta-1)}}}$. By the fact that $\widetilde{q}>0$ and
$\mathcal{H}_{\infty }^{\delta}(B(\mathbf{0},r))\le r^{\delta}$,
we conclude that the first term of the right-hand side of
\eqref{eq-0924b} is finite.
For the second term of the right-hand side of \eqref{eq-0924b}, we have
\begin{align*}
\int_1^{\infty}{\lambda}^{\widetilde{q}}
\lf[\mathcal{H}_{\infty }^{\delta}
\lf(B\lf(\mathbf{0},c\lambda^{\frac1{\eta-1}}\r)\r)\r]^{\frac{\widetilde{q}}p}
\,\frac{d\lambda}{\lambda}\sim
\int_1^{\infty}{\lambda}^{\widetilde{q}+\frac{\delta}{\eta-1}
\frac{\widetilde{q}}{p}}
\,\frac{d\lambda}{\lambda}
\end{align*}
is also finite if $\widetilde{q}+\frac{\delta}{\eta-1}
\frac{\widetilde{q}}{p}<0$, that is, $\eta>1-\frac{\delta}{p}$.
By $s>\frac{p(\delta-\mu p)}{\delta-p}$, we find that $1-\frac{\delta}{p}
<-\frac{\delta-\mu p}{s}$ is reasonable.
Thus, we could take the parameter $\eta$ such that $1-\frac{\delta}{p}
<\eta\leq-\frac{\delta-\mu p}{s}.$
Therefore, in this case, $\| u_0-b\|_{L^{s,q}(\Omega,\mathcal{H}_{\infty }
^{\delta-\mu p})}=\infty$ for any $b\in\rr$,
but $\|\nabla u_0\|_{L^{p,\widetilde{q}}(\Omega,\mathcal{H}_{\infty }^{\delta })}
<\infty$ for any $\widetilde{q}\in(0,\fz)$,
and hence the exponent $\frac{p(\delta-\mu p)}{\delta-p}$ in
Theorem \ref{them-2}(i) is the best possible. Similarly, one can show that
the exponent $\frac{p(\delta-\mu p)}{\delta-p}$ in
Theorem \ref{them-2}(ii) is also the best possible.
\end{Example}

As a by-product of the proof of Poincar\'e--Sobolev inequalities,
we give the proof of the boundedness of the Riesz potential on
Choquet--Lorentz integrals as follows.

\begin{proof}[Proof of Theorem \ref{them-rp}]
Let $\alpha\in(0,n)$, $\delta\in(0,n]$, and $\mu\in[0,\alpha)$.
To prove (i), suppose $p\in(\frac{\delta}{n},\frac{\delta}{\alpha})$
and $q\in(\frac{\delta(\delta-\mu p)}{n(\delta-\alpha p)},\infty)$.
From Proposition \ref{lem-0908b1}(i), Lemma \ref{lem0909Lc1}, and Theorem \ref{thm0530},
we infer that
\begin{align*}
&\lf\|I_{\alpha}f\r\|_{L^{\frac{p(\delta-\mu p)}{\delta-p\alpha},q}(\rn,
\mathcal{H}_{\infty }^{\delta-\mu p})}\\
&\hs \ls \lf\{\int_0^{\infty}{\lambda}^q\lf[\mathcal{H}_{\infty }
^{\delta-\mu p}\lf(\lf\{x\in\rn:\  \left[\cm_{\mu}(f)(x)\right]
^{\frac{\delta-p\alpha}{\delta-\mu p}}
\|f\|^{\frac{p(\alpha-\mu)}
{\delta-\mu p}}_{L^{p,\frac{q(\delta-p\alpha)}{\delta-\mu p}}
(\rn,\mathcal{H}_{\infty }^{\delta })}>\lambda\r\}\r)\r]
^{\frac{q(\delta-p\alpha)}{p(\delta-\mu p)}}\,\frac{d\lambda}{\lambda}\r\}^{\frac{1}{q}}\\
&\hs\ls \|f\|^{\frac{p(\alpha-\mu)}
{\delta-\mu p}}_{L^{p,\frac{q(\delta-p\alpha)}{\delta-\mu p}}
(\rn,\mathcal{H}_{\infty }^{\delta })}\lf\|\left[\cm_{\mu}(f)\right]
^{\frac{\delta-p\alpha}{\delta-\mu p}}\r\|_{L^{\frac{p(\delta-\mu p)}{\delta-
p\alpha},q}(\rn,\mathcal{H}_{\infty }^{\delta-\mu p})} \\
&\hs\lesssim \|f\|^{\frac{p(\alpha-\mu)}
{\delta-\mu p}}_{L^{p,\frac{q(\delta-p\alpha)}{\delta-\mu p}}
(\rn,\mathcal{H}_{\infty }^{\delta })}
\lf\|\cm_{\mu}(f)\r\|^{\frac{\delta-p\alpha}{\delta-\mu p}}_{L^{p,\frac{q(\delta-p\alpha)}{\delta-\mu p}}(\rn,\mathcal{H}
_{\infty }^{\delta-\mu p})} \\
&\hs\ls\|f\|^{\frac{p(\alpha-\mu)}
{\delta-\mu p}}_{L^{p,\frac{q(\delta-p\alpha)}{\delta-\mu p}}
(\rn,\mathcal{H}_{\infty }^{\delta })}\|f\|^{\frac{\delta-p\alpha}{\delta-\mu p}}
_{L^{p,\frac{q(\delta-p\alpha)}{\delta -\mu p}}(\rn,\mathcal{H}_{\infty }^{\delta })}
\sim\|f\|_{L^{p,\frac{q(\delta-p\alpha)}{\delta-\mu p}}
(\rn,\mathcal{H}_{\infty }^{\delta})},
\end{align*}
which completes the proof of (i).

For (ii), by Lemmas \ref{lem-0908b1}(ii) and \ref{lem0909Lc1} and the weak boundedness
of $\cm_{\mu}$ in Lemma \ref{lem0531} for $p=\frac{\delta}{n}$, we find that
\begin{align*}
&\lf\|I_{\alpha}f\r\|_{L^{\frac{p(\delta-\mu p)}{\delta-p\alpha},\fz}(\rn,
\mathcal{H}_{\infty }^{\delta-\mu p})}\\
&\hs \ls\sup_{\lambda\in(0,\fz)} {\lambda}\lf[\mathcal{H}_{\infty }
^{\delta-\mu p}\lf(\lf\{x\in\rn:\  \left[\cm_{\mu}(f)(x)\right]
^{\frac{\delta-p\alpha}{\delta-\mu p}}
\|f\|^{\frac{p(\alpha-\mu)}
{\delta-\mu p}}_{L^{p}
(\rn,\mathcal{H}_{\infty }^{\delta })}>\lambda\r\}\r)\r]
^{\frac{\delta-p\alpha}{p(\delta-\mu p)}}\\
&\hs\lesssim \|f\|^{\frac{p(\alpha-\mu)}
{\delta-\mu p}}_{L^{p}
(\rn,\mathcal{H}_{\infty }^{\delta })}
\lf\|\cm_{\mu}(f)\r\|^{\frac{\delta-p\alpha}{\delta-\mu p}}_{L^{p,\fz}(\rn,\mathcal{H}
_{\infty }^{\delta-\mu p})} \\
&\hs\ls\|f\|^{\frac{p(\alpha-\mu)}
{\delta-\mu p}}_{L^{p}
(\rn,\mathcal{H}_{\infty }^{\delta })}\|f\|^{\frac{\delta-p\alpha}{\delta-\mu p}}
_{L^{p}(\rn,\mathcal{H}_{\infty }^{\delta })}
\sim\|f\|_{L^{p}
(\rn,\mathcal{H}_{\infty }^{\delta})}.
\end{align*}
Thus, conclusion (ii) is also true.
This then finishes the proof of Theorem \ref{them-rp}.
\end{proof}

We next show that the exponent $\frac{p(\delta-\mu p)}{\delta-p\alpha}$ in Theorem \ref{them-rp} is sharp, that is, replacing this exponent by any $s>\frac{p(\delta-\mu p)}{\delta-p\alpha}$, then Theorem \ref{them-rp} must be false. To construct a counterexample for this, the basic idea comes from Example \ref{ex-0927}, in which the crucial point of that example working is the singularity of $u_0$ near the origin. However, Theorem \ref{them-rp} requires the function to be locally integrable. Thus, the function $u_0$ with singularity near the origin is not permitted. To overcome this difficulty, we construct a sequence of locally integrable functions $\{f_\varepsilon\}_{\varepsilon\in(0,1)}$, whose
limit function as $\varepsilon\to0$ has singularity near the origin.

\begin{Example}\label{rem0619}
For any $\varepsilon\in(0,1)$, let
$f_\varepsilon(x):=|x|^\eta \textbf{1}_{B(\textbf{0},10)\backslash B(\textbf{0},\varepsilon)}(x)$ for any $x\in\rn,$
where $\eta\in(-\fz,0)$ is chosen later.
Then $f_{\varepsilon}\in L^1_{\rm{loc}}(\rn)$ for any $\varepsilon\in(0,\fz)$.
Let $\alpha\in(0,n)$, $\delta\in(0,n]$, $\mu\in[0,\alpha)$, and $p\in(\frac{\delta}{n},\frac{\delta}{\alpha})$.
We claim that, if $s\in(\frac{p(\delta -\mu p)}{\delta-p\alpha},\fz)$,
then there exists $\eta\in(-\fz,0)$ such that, for any $q\in(0,\fz)$,
$\lim_{\varepsilon\to 0}\|I_\alpha (f_\varepsilon)\|_{L^{s,q}(\rn,\ch_\fz^{\delta-\mu p})}=\fz,$
but, for any $\widetilde{q}\in(0,\fz)$,
$\sup_{\varepsilon\in(0,\fz)}\|f_\varepsilon\|_{L^{p,\widetilde{q}}(\rn,\ch_\fz^\delta)}<\fz.$

Indeed, for any $\varepsilon\in(0,1)$ and $x\in \rn$ with $2\varepsilon\le |x|<10\varepsilon$, we have
\begin{align*}
I_\alpha (f_\varepsilon)(x)\ge \int_{B(0,|x|)\backslash B(0,\frac{|x|}2)}\frac{|y|^\eta}{|x-y|^{n-\alpha}}\,dy
\gs |x|^{{\eta+\alpha}}.
\end{align*}
Then, with the assumption $\eta+\alpha<0$, we find that
\begin{align*}
\|I_\alpha (f_\varepsilon)\|_{L^{s,q}(\rn,\ch_\fz^{\delta-\mu p})}^q
&\ge\int_0^\fz\lz^q\lf[\ch_\fz^{\delta-\mu p}\lf(\lf\{x\in B(\textbf{0},10\varepsilon)\backslash B(\textbf{0},2\varepsilon): |x|^{\eta+\alpha}>\lz
\r\}\r)\r]^{\frac qs}\,\frac{d\lz}{\lz}\\
&\ge \int_{(10\varepsilon)^{\eta+\alpha}}^{(2\varepsilon)^{\eta+\alpha}}
\lz^q\lf[\ch_\fz^{\delta-\mu p}
\lf(\lf\{x\in\rn:\ 2\varepsilon\le |x|\le \lz^{1/(\eta+\alpha)}
\r\}\r)\r]^{\frac qs}\,\frac{d\lz}{\lz}\\
&\ge \int_{(10\varepsilon)^{\eta+\alpha}}^{(2\varepsilon)^{\eta+\alpha}}
\lz^q\lf[\lz^\frac{\delta-\mu p}{\eta+\alpha}-(2\varepsilon)^{\delta-\mu p}\r]^{\frac qs}\,\frac{d\lz}{\lz}
\gs \varepsilon^{[\eta+\alpha+(\delta-\mu p)/s]q}\to \fz,
\end{align*}
as $\varepsilon\to0$ if $\eta<-\frac{\delta-\mu p}s-\alpha$. On the other hand, for any $\widetilde{q}$,
by an argument similar to that used in the proof of \eqref{eq0619a}, we conclude that, with $\eta>-\frac\delta p$,
$\|f_\varepsilon\|_{L^{p,\widetilde{q}}(\rn,\ch_\fz^\delta)}\le C$ for some positive constant $C$ independent of
 $\varepsilon$.
By the assumption that $s>\frac{p(\delta-\mu p)}{\delta-p\alpha}$, we find that
$-\frac\delta p<-\frac{\delta-\mu p}{s}-\alpha$. Therefore, it is possible to choose the parameter $\eta$ satisfies
$-\frac\delta p<\eta<-\frac{\delta-\mu p}{s}-\alpha$
and, for any such $\eta$, the above claim holds. This
proves that the exponent $\frac{p(\delta-\mu p)}{\delta-p\alpha}$ in Theorem \ref{them-rp}(i) is the best possible. Similarly, one can show that
the exponent $\frac{p(\delta-\mu p)}{\delta-p\alpha}$ in
Theorem \ref{them-rp}(ii) is also the best possible.
\end{Example}

We end this section by giving a Poincar\'e--Sobolev inequality
for $C^1(\Omega)$-functions with compact support.
First, by \cite[p.\,22, Theorem 2]{MGV1985}, we find that there exists a
positive constant $C$ such that, for any $u\in C^1_{\rm c}(\Omega)$
(the set of all continuously differentiable functions on
$\Omega$ with compact support in $\Omega$) and $x\in\rn$,
\begin{align*}
|u(x)|\leq C\int_{\Omega}\frac{|\nabla u(y)|}{|x-y|^{n-1}}
\,dy,
\end{align*}
which further implies the following conclusion; we omit the details.

\begin{thm} Let $\delta\in\lf(0,n\r]$.
\begin{enumerate}
\item[{\rm (i)}]
If $\Omega\subset\rn$ is a bounded domain and
$p,q\in(\frac{\delta}{n},\infty)$, then there exists a
positive constant $C$, depending only on $n$,
$\delta$, $p$, and $q$, such that, for any $u\in C^1_{\rm c}(\Omega)$,
\begin{align*}
\|u\|_{L^{p,q}(\Omega,\mathcal{H}_{\infty }^{\delta })}\leq
C{\rm diam}(\Omega)~\|\nabla u\|_{L^{p,q}(\Omega,\mathcal{H}_{\infty }^{\delta })}
\end{align*}
and, if $p=\frac{\delta}{n}$, then there exists a
positive constant $C$, depending only on $n$,
$\delta$, and $p$, such that, for any $u\in C^1_{\rm c}(\Omega)$,
\begin{align*}
\|u\|_{L^{p,\fz}(\Omega,\ch_\fz^\delta)}
\le C{\rm diam}(\Omega)~
\|\nabla u\|_{L^{p}(\Omega,\ch_\fz^\delta)}.
\end{align*}
\item[{\rm (ii)}]
If $\Omega\subset\rn$ is a domain, $\mu\in[0,1)$,
$p\in(\frac{\delta}{n},\delta)$, and $q\in(\frac{\delta(\delta-\mu p)}{n(\delta-p)},\infty)$, then there exists
a positive constant $C$, depending only on $n$, $\mu$,
$\delta$, $p$, and $q$, such that, for any $u\in C^1_{\rm c}(\Omega)$,
\begin{align*}
\|u\|_{L^{\frac{p(\delta-\mu p)}{\delta-p},q}(\Omega,
\mathcal{H}_{\infty }^{\delta-\mu p })}\leq
C\|\nabla u\|_{L^{p,\frac{q(\delta-p)}{\delta-\mu p}}
(\Omega,\mathcal{H}_{\infty }^{\delta })}
\end{align*}
and, if $p=\frac{\delta}{n}$, then there exists a
positive constant $C$, depending only on $n$, $\mu$,
$\delta$, and $p$, such that, for any $u\in C^1_{\rm c}(\Omega)$,
\begin{align*}
\|u\|_{L^{\frac{p(\delta-\mu p)}{\delta-p},\fz}
(\Omega,\mathcal{H}_{\infty }^{\delta-\mu p })}
\leq C \|\nabla u\|_{L^{p}(\Omega,\mathcal{H}_{\infty }^{\delta })}.
\end{align*}
\end{enumerate}
\end{thm}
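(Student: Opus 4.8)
The plan is to transcribe the proofs of Theorems \ref{them-1} and \ref{them-2} almost verbatim, deleting the mean value $u_B$ and the John factors $\beta(\beta/\alpha)^{2n}$, and replacing the pointwise representation used there by the cleaner bound
$$|u(x)|\le C\int_{\Omega}\frac{|\nabla u(y)|}{|x-y|^{n-1}}\,dy,\quad x\in\rn,$$
valid for every $u\in C^1_{\rm c}(\Omega)$ by \cite[p.\,22, Theorem 2]{MGV1985} as recalled just above. Since $u\in C^1_{\rm c}(\Omega)$, the gradient $|\nabla u|$ is continuous with compact support in $\Omega$; hence $|\nabla u|\in L^1(\rn)$, $\ch_\fz^\delta(\supp u)<\fz$, and every quasi-norm below is finite. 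As in the earlier proofs we extend $|\nabla u|$ by zero outside $\Omega$, so both $\cm|\nabla u|$ and the Riesz potential $I_1(|\nabla u|)$ are well defined, and the Choquet(--Lorentz) quasi-norms of this extension over $\rn$ coincide with those of $|\nabla u|$ over $\Omega$.

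For (i), first assume $p,q\in(\frac\delta n,\fz)$. Because $\Omega$ is bounded, \cite[Lemma 2.8.3]{zsy16} (see also \cite[Lemma]{HL1972}) gives $\int_{\Omega}|\nabla u(y)||x-y|^{-(n-1)}\,dy\ls{\rm diam}(\Omega)\,\cm|\nabla u|(x)$ for all $x\in\rn$; together with the displayed bound this yields $|u(x)|\ls{\rm diam}(\Omega)\,\cm|\nabla u|(x)$. Taking the $L^{p,q}(\Omega,\ch_\fz^\delta)$-quasi-norm and applying Theorem \ref{thm0530} with $\mu=0$ (legitimate precisely when $p,q\in(\frac\delta n,\fz)$),
$$\|u\|_{L^{p,q}(\Omega,\ch_\fz^\delta)}\ls{\rm diam}(\Omega)\,\|\cm|\nabla u|\|_{L^{p,q}(\Omega,\ch_\fz^\delta)}\ls{\rm diam}(\Omega)\,\|\nabla u\|_{L^{p,q}(\Omega,\ch_\fz^\delta)}.$$
For the endpoint $p=\frac\delta n$, the same pointwise estimate together with the weak-type bound for $\cm$ in Lemma \ref{a1.2} gives $\|u\|_{L^{p,\fz}(\Omega,\ch_\fz^\delta)}\ls{\rm diam}(\Omega)\,\|\nabla u\|_{L^{p}(\Omega,\ch_\fz^\delta)}$. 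This proves (i).

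For (ii), assume $p\in(\frac\delta n,\delta)$ and $q\in(\frac{\delta(\delta-\mu p)}{n(\delta-p)},\fz)$; multiplying by $\frac{\delta-p}{\delta-\mu p}$ shows $\frac{q(\delta-p)}{\delta-\mu p}>\frac\delta n$, so Theorem \ref{thm0530} applies with that secondary Lorentz index (and $p<\delta<\frac\delta\mu$). From the displayed bound and Proposition \ref{lem-0908b1}(i) with $\alpha=1$,
$$|u(x)|\ls\lf[\cm_\mu|\nabla u|(x)\r]^{\frac{\delta-p}{\delta-\mu p}}\|\nabla u\|^{\frac{p(1-\mu)}{\delta-\mu p}}_{L^{p,\frac{q(\delta-p)}{\delta-\mu p}}(\Omega,\ch_\fz^\delta)},\quad x\in\rn.$$
Taking the $L^{\frac{p(\delta-\mu p)}{\delta-p},q}(\Omega,\ch_\fz^{\delta-\mu p})$-quasi-norm, using Lemma \ref{lem0909Lc1} to pull the power $\frac{\delta-p}{\delta-\mu p}$ outside the quasi-norm (converting $L^{\frac{p(\delta-\mu p)}{\delta-p},q}$ into $L^{p,\frac{q(\delta-p)}{\delta-\mu p}}$), and then Theorem \ref{thm0530} to bound $\|\cm_\mu|\nabla u|\|_{L^{p,\frac{q(\delta-p)}{\delta-\mu p}}(\Omega,\ch_\fz^{\delta-\mu p})}$ by $\|\nabla u\|_{L^{p,\frac{q(\delta-p)}{\delta-\mu p}}(\Omega,\ch_\fz^\delta)}$, the exponents $\frac{p(1-\mu)}{\delta-\mu p}+\frac{\delta-p}{\delta-\mu p}=1$ add up to give the desired inequality. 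For $p=\frac\delta n$ one argues identically, using Proposition \ref{lem-0908b1}(ii) instead of (i) and the weak-type bound for $\cm_\mu$ in Lemma \ref{lem0531}, exactly as in the proof of Theorem \ref{them-2}(ii). I expect no genuine obstacle here: the whole argument is a line-by-line copy of the proofs of Theorems \ref{them-1} and \ref{them-2} with $u_B$ removed, the only points needing care being the (immediate) finiteness of all quantities for $u\in C^1_{\rm c}(\Omega)$ and the bookkeeping check that the hypothesis on $q$ in (ii) places the secondary index $\frac{q(\delta-p)}{\delta-\mu p}$ in $(\frac\delta n,\fz)$, which is what makes Theorem \ref{thm0530} available.
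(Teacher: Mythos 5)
Your proposal is correct and is essentially the route the paper itself intends: it states the Maz'ja pointwise bound for $u\in C^1_{\rm c}(\Omega)$ and then omits the details precisely because the argument is the one you give, namely rerunning the proofs of Theorems \ref{them-1} and \ref{them-2} with $u_B$ and the John constants removed, using the Hedberg/Ziemer estimate, Theorem \ref{thm0530}, Proposition \ref{lem-0908b1}, and the weak-type bounds of Lemmas \ref{a1.2} and \ref{lem0531} at the endpoints. The only cosmetic point is that the bound $\int_{\Omega}|\nabla u(y)||x-y|^{-(n-1)}\,dy\lesssim{\rm diam}(\Omega)\,\cm|\nabla u|(x)$ should be invoked for $x\in\Omega$ (as in \eqref{eq11-17'}) rather than for all $x\in\rn$, which is all that is needed since the quasi-norms are taken over $\Omega$.
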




\medskip

\noindent Long Huang

\medskip

\noindent School of Mathematics and Information Science,
Guangzhou University, Guangzhou, 510006, The People's Republic of China

\smallskip

\noindent {\it E-mail}: \texttt{longhuang@gzhu.edu.cn}

\medskip
	
\medskip
	
\noindent Yuanshou Cao and Ciqiang Zhuo (Corresponding author)
	
\smallskip
	
\noindent Key Laboratory of Computing and Stochastic Mathematics
(Ministry of Education), School of Mathematics and Statistics,
Hunan Normal University,
Changsha, Hunan 410081, The People's Republic of China
	
\smallskip
	
\noindent {\it E-mails}:
\texttt{yscao@hunnu.edu.cn} (Y. Cao)
	
\noindent\phantom{{\it E-mails:}}
\texttt{cqzhuo87@hunnu.edu.cn} (C. Zhuo)
	
\medskip
	
\medskip
	
\noindent Dachun Yang
	
\smallskip
	
\noindent Laboratory of Mathematics and Complex Systems
(Ministry of Education of China), School of
Mathematical Sciences, Beijing Normal University, Beijing 100875,
The People's Republic of
China

\smallskip
	
\noindent {\it E-mail}:
\texttt{dcyang@bnu.edu.cn}
	
\end{document}